\documentclass[a4paper,11pt]{amsart}

\usepackage{amsfonts}
\usepackage{amssymb}
\usepackage{mathrsfs}
\usepackage{graphics}
\usepackage{tikz}
\usetikzlibrary{matrix,arrows}

\oddsidemargin0in
\evensidemargin0.25in
\textwidth6.25in

\newcommand{\C}{{\mathbb C}}
\newcommand{\R}{{\mathbb R}}
\newcommand{\Q}{{\mathbb Q}}
\newcommand{\Z}{{\mathbb Z}}
\newcommand{\N}{{\mathbb N}}
\newcommand{\D}{{\mathbb D}}
\newcommand{\Pp}{{\mathbb P}}
\newcommand{\CP}{{\mathbb C}{\mathbb P}}
\newcommand{\Hup}{\mathbb{H}}

\newcommand{\fol}{{\mathcal F}}
\newcommand{\calp}{{\mathcal P}}

\newcommand{\cald}{{\mathcal{D}}}
\newcommand{\psl}{{\rm PSL}\, (2,{\mathbb C})}
\newcommand{\pslR}{{\rm PSL}\, (2,{\mathbb R})}
\newcommand{\pslZ}{{\rm PSL}\, (2,{\mathbb Z})}

\newcommand{\tilf}{{\widetilde{\mathcal F}}}

\usepackage[normalem]{ulem}
\usepackage{color}


\def\picill#1by#2(#3)#4
{\vbox to #2
	{\hrule width #1 height 0pt depth 0pt
		\vfill\special{illustration #3 scaled #4}}}

\usepackage{hyperref}

\newtheorem{theorem}{Theorem}[section]
\newtheorem{prop}[theorem]{Proposition}

\newtheorem{lemma}[theorem]{Lemma}

\theoremstyle{definition}
\newtheorem{defi}[theorem]{Definition}
\newtheorem{ex}{Example}

\theoremstyle{remark}
\newtheorem{remark}[theorem]{Remark}

\begin{document}

\title[Foliated currents]{Examples of harmonic foliated currents and singular Levi-flats on the projective plane}

\author[M. Alkateeb]{Mohamad Alkateeb}
\address{Institut de Math\'ematiques de Toulouse ; UMR 5219, Universit\'e de Toulouse, 118 Route de Narbonne, F-31062 Toulouse, France.}
\email{\href{mailto:mohamad.alkateeb@math.univ-toulouse.fr}{mohamad.alkateeb@math.univ-toulouse.fr}}

\author[J. Rebelo]{Julio Rebelo}
\address{Institut de Math\'ematiques de Toulouse ; UMR 5219, Universit\'e de Toulouse, 118 Route de Narbonne, F-31062 Toulouse, France.}
\email{\href{mailto:rebelo@math.univ-toulouse.fr}{rebelo@math.univ-toulouse.fr}}


\subjclass[2010]{Primary 37F75, 37A; Secondary 32C07.}
\keywords{}

\begin{abstract}
	We provide examples of foliations on the complex projective plane $\CP^2$ carrying positive foliated harmonic currents
	whose supports coincide with singular Levi-flats which, in turn, can be chosen real-analytic (but non-algebraic) or
	merely continuous
	with fractal transverse nature.
	Furthermore, non-trivial examples as above can already be found among foliations of degree~$2$ and~$3$. In addition,
	the space of positive foliated harmonic currents for these foliations is fully characterised and it contains
	a unique harmonic (non-closed) current supported on the Levi-flat in question.
	Finally, we also provide examples of foliations carrying diffuse positive
	foliated closed currents related to a theorem due to Brunella as well as a general criterion for the existence
	of singular real analytic Levi-flats for Riccati foliations. 
\end{abstract}

\maketitle

\section{Introduction}

The purpose of this paper is to provide examples of foliations on the complex projective plane $\CP^2$ carrying -
dynamically interesting - positive foliated harmonic currents that can be detailed to a significant extent. These
will include currents whose supports are either singular real-analytic Levi-flats or 
continuous Levi-flats with fractal nature and Hausdorff dimension greater than~$3$.
Throughout this paper, a current $T$ is said to be {\it harmonic}
if it verifies $i\partial \overline{\partial} T =0$. In particular, {\it closed} currents are automatically harmonic.
However, in a suitable sense, there are few foliations on $\CP^2$
admitting non-trivial positive foliated closed currents whereas the existence of 
positive {\it foliated harmonic currents}\, is a rather general phenomenon.
For example, according to Fornaess and Sibony \cite{fornaesssibony-2}, every
foliation on $\CP^2$ having only hyperbolic singularities and leaving no algebraic curve invariant admits
a unique positive foliated harmonic current (and no closed ones). In particular, the minimal foliations
constructed in \cite{Loray} admit a unique
positive foliated harmonic current and its support
coincides with all of $\CP^2$.

In a kind of opposite direction, the simplest examples of foliations on $\CP^2$
carrying positive foliated {\it closed}\, currents are provided by foliations leaving invariant some algebraic curve
$C \subset \CP^2$: the integration current
over the (possibly singular) curve $C$ is positive, foliated, and closed.
In general, the study of (positive foliated) harmonic/closed currents is a very active area relying on a
variety of points of view as confirmed by the large literature including \cite{Garnett}, \cite{Candel}, \cite{BoNessim},
\cite{fornaessSibony}, \cite{fornaesssibony-2}, \cite{deroinkleptsyn}, \cite{garrandes}, \cite{sibonyetc},
\cite{nguyen}, \cite{dinhsibony}, \cite{sibony_uniqueness}
among several others (cf. the reference lists of the quoted papers).

However, despite a few remarkable theorems on their existence and uniqueness such as Fornaess-Sibony theorem
in \cite{fornaesssibony-2}, very little is known about the actual structure of these harmonic currents. For example,
dealing with {\it non-closed} harmonic currents, basically the only type of information so far available is the mentioned fact that
the harmonic currents associated with the foliations of \cite{Loray} have full support.
In particular, this raises the outstanding problem of deciding whether or not these currents
are {\it absolutely continuous}.

Moreover, the complex projective plane is arguably the most interesting surface to investigate the structure of harmonic currents due to their relations
with a few outstanding conjectures that are specific for $\CP^2$, see for example \cite{moreonesibony}.
Not surprisingly, constructing suitable foliations on $\CP^2$ involves special difficulties which, in turn, 
explains the relative paucity of examples and our focus on this case.

Concerning the study of (non-closed) harmonic currents, the main results of this paper are summarized by Theorem~A which
seems to answer a question formulated in \cite{lebel-perez} and pertains directly some problems mentioned
in \cite{moreonesibony} (with respect to \cite{moreonesibony} see also our Theorem~B). This theorem can also be viewed as a general
statement about Riccati foliations on $\CP^2$.

\vspace{0.2cm}

\noindent {\bf Theorem A}. {\sl Let $\fol$ be a Riccati foliation on the projective plane
	$\CP^2$ having only simple invariant lines
	$C_1, \ldots , C_k$. Assume that the local holonomy map of the foliation $\fol$ around each invariant line $C_i$ is an elliptic element
of $\psl$ and that the global holonomy group $\Gamma \subset \psl$ of the foliation $\fol$
is a Fuchsian (resp. quasifuchsian) group of first kind. Then, the following holds:
\begin{itemize}
	\item[(a)] There exists a closed set $\mathcal{L} \subset \CP^2$ of topological dimension equal to~$3$
	which is a minimal set for the foliation $\fol$. In other words, $\mathcal{L}$ is invariant by $\fol$ and every
	leaf of $\fol$ in $\mathcal{L}$ is dense in $\mathcal{L}$.
	
	\item[(b)] If $\Gamma$ is Fuchsian, then $\mathcal{L}$ is a real analytic non-algebraic
	set with $k$ singular points, all of
	them of orbifold-type. If $\Gamma$ is quasifuchsian, then $\mathcal{L}$ is a ``singular topological manifold''
	with Hausdorff dimension strictly greater than~$3$ and whose singular set consists again of $k$ singular points
	of orbifold-type.
	
	\item[(c)] The set $\mathcal{L}$ carries a unique positive foliated harmonic current $T$. The current $T$
	is not closed and its support coincides with all of $\mathcal{L}$.
	
	\item[(d)] The space of all positive foliated harmonic currents on the surface $\CP^2$ is generated by
	$T$ and by the closed currents induced by integration over each of the invariant lines $C_1, \ldots ,C_k$.
	
	\item[(e)] The current $T$ has zero (geometric) self-intersection in the sense of \cite{fornaessSibony}.
\end{itemize}
}

In particular, the current $T$ in question
appears to be the first example of a (non closed, foliated)
positive harmonic current on $\CP^2$ supported on a set with empty interior or,
alternatively, on a set of null Lebesgue measure. In particular, it is interesting to see
that this support may be very regular (real-analytic) or merely continuous
with transverse fractal nature and Hausdorff dimension strictly greater than~$3$. We should also mention that
foliations as in Theorem~A associated with analytic Levi-flats already exist in degree~$2$ whereas
transversely fractal Levi-flats can be found among degree~$3$ foliations. The
reader is referred to Section~\ref{examplesetc} for explicit examples.

When the holonomy group $\Gamma$ is Fuchsian, the resulting Levi-flat $\mathcal{L}$ is necessarily
non-algebraic thanks to Lebl's theorem in \cite{lebel}: if it were algebraic, the
leaves of $\fol$ in $\mathcal{L}$ would be complex algebraic curves which is clearly not the case since they are
dense in $\mathcal{L}$ (item~(a)). It is also interesting to notice that the non-algebraic nature of the leaves of
$\fol$ in $\mathcal{L}$ is the only obstacle preventing $\mathcal{L}$ of being algebraic as it
follows from \cite{perez-mol-rosas}. Finally, note that
item~(b) in Theorem~A provides a criterion to ensure the existence of real-analytic Levi-flats.
The case where $\Gamma$ is Fuchsian provides {\it singular real-analytic}\, Levi-flats $\mathcal{L}$
with very simple singularities (orbifold-type): in some sense $\mathcal{L}$ is ``as little singular as possible''
which is worth singling out since it might shed some light on the well-known problem about the existence of smooth
Levi-flats on $\CP^2$.

At this point a comment of {\it closed foliated currents} is in order. Although they
rarely exist, the problem of describing those
situations where they are present has attracted significant interest due, among other reasons, to its connection
with Kobayashi hyperbolic spaces, see \cite{Demailly}, \cite{mcquillan}. With respect to the former, our ideas
also enable us to provide examples answering a question by N. Sibony which was motivated by Brunella's result in
\cite{brunella-L'EnsMath}.

First, recall that a (positive foliated)
current is said to be {\it algebraic}\, if it coincides with the integration current over an algebraic curve invariant by the foliation $\fol$.
Otherwise, $T$ is said to be {\it diffuse}. Recall also that a singular point
$p$ of a foliation $\fol$ is called {\it simple}\, if the foliation $\fol$ can locally be represented
by a holomorphic
vector field whose linear part at $p$ has at least one eigenvalue different from zero (cf. Section~\ref{basics}).
With this terminology, and building on the work of Mcquillan \cite{mcquillan}, Brunella showed
that any foliation $\fol$ on $\CP^2$
having only simple singular points and admitting a diffuse (positive foliated) closed current $T_A$ of Ahlfors type
must have degree at most~$4$, see \cite{brunella-L'EnsMath}. This has prompted N. Sibony
to wonder if some bound on the degree of the foliation $\fol$
may be obtained by dropping (or significantly weakening) the assumption on the corresponding singular points.
In this direction, in Section~\ref{examplesetc} of the paper, the following theorem will be proved:

\vspace{0.2cm}

\noindent {\bf Theorem B}. {\sl For every $n \in \N$, there exists a degree~$n$ foliation $\fol_n$ on $\CP^2$
satisfying all of the following conditions:
\begin{itemize}
	\item All but one of the singular points of the foliation $\fol_n$ are simple.
	
	\item The foliation $\fol_n$ carries uncountably many (independent) diffuse Ahlfors currents.
	Similarly, there are uncountably many
	holomorphic maps from $\C$ to $\CP^2$ that are tangent to the foliation $\fol_n$ and have Zariski-dense images.
\end{itemize}
}

Note, however, that not all (positive) diffuse foliated closed currents are of Ahlfors type and some examples
will also be indicated in Section~\ref{examplesetc}.

Let us close this Introduction with an outline of the paper. Given a discrete group action, the
standard {\it suspension construction} basically yields a
foliation defined on a certain fiber bundle. In this sense, suspensions never produce foliations on $\CP^2$. Yet,
our strategy will consist of building some very special suspensions and then systematically modify
the foliation/ambient surface to eventually make the whole structure
fit in $\CP^2$. Naturally, this will require us
to overcome a few difficulties such as the fact that our basic objects
(foliated currents, real analytic sets) {\it are not}\, birationally invariant in general.

In Section~\ref{basics}, we show how
to construct Riccati foliations on $\CP^1$-bundles with prescribed holonomy
(Proposition~\ref{Riccati_foliation-F1}), a result going all the way back
to Birkhoff \cite{Birkhoff-1} and rediscovered in \cite{LinsNeto}. Besides background and complementary material,
Section~\ref{basics} includes a reasonably short proof of Proposition~\ref{Riccati_foliation-F1}
which is similar to Lins-Neto's argument. However, besides making the paper self-contained, our
proof shows that certain additional choices are always possible and this freedom is sometimes useful in the course
of this work.

In Section~\ref{examplesetc}, we use Proposition~\ref{Riccati_foliation-F1} to provide examples of degree~$2$
foliations on $\CP^2$ satisfying the conditions of Theorem~A for Fuchsian groups and thus giving rise to
real-analytic Levi-flats. Here, it is worth
mentioning that the degree~$2$ examples
turn out to be the Halphen vector fields studied in \cite{adolfo-IHES}. Similarly, we show how to
obtain transversely fractal Levi-flats starting from degree~$3$. In the second part of this section, we prove
Theorem~B. Section~\ref{examplesetc} ends with some examples of diffuse positive foliated closed currents that {\it are not}\,
of Ahlfors type.
Section~\ref{provingTheoremA_Part_I} contains the proof of statements~(a) and~(b) in Theorem~A
except by a specific lemma (Lemma~\ref{addedlemmaanalytic}) whose proof is deferred to the last section
since similar techniques are employed to establish the remaining items in Theorem~A as well.

\noindent {\bf Terminology}. In the statements of our theorems and throughout the text,
${\rm PSL}\, (2,\C )$ is
identified with the automorphism group of $\CP^1$. Elements in ${\rm PSL}\, (2,\C )$ are
classified as hyperbolic, parabolic, elliptic and the identity. In other words,
the identity matrix is set aside so that whenever an element of ${\rm PSL}\, (2,\C )$ is said to be
elliptic or parabolic it is understood that {\it this element is different from the identity}.

\noindent {\bf Acknowledgments}. We would like to remember N. Sibony for his interest in our examples, his
untimely passage saddened us all. We also thank T.-C. Dinh and V.-A. Nguyen for their interest in this
work and for some nice
discussions about their results on intersection theory of currents.
Special thanks are due to R. Mol for explaining so much about (singular) Levi-flats to us.

Both authors are partially supported by CIMI through the project
``Complex dynamics of group actions, Halphen and Painlev\'e systems''.

\section{Riccati Foliations}\label{basics}

Let $R$ be a compact Riemann surface and $S$ a $\CP^1$-bundle over $R$ whose projection is denoted by $\calp : S \rightarrow R$.
A Riccati foliation is usually defined as a {\it singular foliation} $\fol$ on the surface $S$ that is {\it transverse} to the fibers of $S$.
This statement can be made accurate as follows. There is a finite set $\{p_1, \ldots ,p_k\} \subset R$ such that the following holds:
\begin{itemize}
  \item The fibers $C_i=\calp^{-1} (p_i)$ over the points $p_i$ are invariant by the foliation $\fol$, $i=1,\ldots ,k$.
  \item The leaves of the foliation $\fol$, away from the invariant fibers $C_i$, are transverse to the fibers of $\calp$.
  In particular, the singular points of the foliation $\fol$ are all contained in the union of the invariant fibers $C_i$.
\end{itemize}
There follows that the restriction of $\fol$ to the open surface $S \setminus \{C_1, \ldots ,C_k\}$, is regular and, in fact,
transverse to the fibers of $\calp$ sitting over points in the (open) Riemann surface $R \setminus \{p_1, \ldots ,p_k\}$. Thus,
the standard holonomy representation gives arise to a homomorphism $\rho : \pi_1 (R \setminus \{p_1, \ldots ,p_k\}) \rightarrow
{\rm Aut}\, (\CP^1) \simeq \psl$. The {\it global holonomy group}\,
$\Gamma = \rho (\pi_1 (R \setminus \{p_1, \ldots ,p_k\}))\subset \psl$ encodes all of the
transverse dynamics of the foliation $\fol$.

In this paper, however, we will focus on {\it classical Riccati foliations}\, in which case the base space
$R$ is again the complex projective line $\CP^1$ so that the surface $S$ is a $\CP^1$-bundle over $\CP^1$ and, hence,
a Hirzebruch surface $F_n$, $n\geq 0$. Recall that $F_0
=\CP^1 \times \CP^1$ and that, for $n \geq 1$, $F_n$ is fully characterised as a  $\CP^1$-bundle over
$\CP^1$ admitting a rational curve of self-intersection~$-n$ as a section, cf. \cite{barth}.

Recall that $F_n$ possesses a standard atlas consisting of four affine coordinates. More precisely,
consider two copies of $\C \times \CP^1$.
The first copy is endowed with a pair of affine coordinates, namely $(x,y)$ and $(\overline{x}, \overline{y})$,
satisfying with $x = \overline{x}$ and $y = 1/\overline{y}$.
The second copy if endowed with coordinates $(u,v)$ and $(\overline{u}, \overline{v})$ satisfying the analogous
relation. The surface $F_n$
can then be obtained by identifying the point $(x,y)$ of the first copy with the point $(u,v) = (1/x, x^ny)$ of the second
one. The section of self-intersection $-n$, with this
identification, is nothing but the rational curve defined by $\{ y=0\}$ ($\{ v=0\}$) and it will often
be referred to as the {\it null section}.

Consider now a Riccati foliation $\fol$ defined on the first Hirzebruch surface $F_1$. The condition of
transversality ensures that, in
the above mentioned affine coordinates $(x,y)$, the foliation $\fol$ is induced by a vector field having the form
\begin{equation}
F(x) \frac{\partial}{\partial x} + [c_0(x) + c_1(x) y + c_2(x) y^2] \frac{\partial}{\partial y},
\label{simplenormalform_Riccati_Fn}
\end{equation}
where $F$, $c_0$, $c_1$, and $c_2$ are polynomials. In particular, if $c_0$ is a non-zero constant then the foliation
$\fol$ is transverse to the null section of the surface $F_1$ except maybe at the ``point at infinity''. In this regard,
it is always possible to choose coordinates where the resulting fiber at infinity is not invariant by the
foliation $\fol$ so that the invariant fibers of the foliation $\fol$ are in natural
correspondence with the zeros of the polynomial $F$. Moreover, an invariant fiber is said to be {\it simple}
if it corresponds to a simple zero of $F$, otherwise it is called a {\it multiple fiber}.

An alternative definition of {\it simple fibers} that is slightly more intrinsic since it does not depend on the particular
system of coordinates, depends on the notion of eigenvalues of a singular point. Recall that on a complex
surface every holomorphic foliation $\fol$ is locally given by the integral curves of a holomorphic vector
field $X$ having only isolated zeros. A vector field satisfying
this condition is said to be a {\it local representative} of the foliation $\fol$. Now, if $p$ is a singular point
of the foliation $\fol$, the {\it eigenvalues}\, of the foliation $\fol$ at the singular point $p$ are
defined as the eigenvalues of the linear part of a representative vector field $X$ at the point $p$.
Since two representative vector fields differ by multiplication by an invertible function, it
follows that the eigenvalues of a foliation at the singular point 
$p$ are well defined only up to a multiplicative constant, cf. \cite{IlyashenkoYakovenko} and \cite{HelenaandI}.
With this terminology it is immediate to check
that an invariant fiber of a Riccati foliation is simple {\it if and only if every singular
point lying in this fiber has a non-zero eigenvalue in the direction transverse to the invariant fiber itself}.
Also, this notion does not depend on the choice of the
singular point in the sense that if one singular point has a non-zero eigenvalue in a direction transverse
to the invariant fiber then any other singular point lying in the same fiber does too. In this respect,
Equation~(\ref{simplenormalform_Riccati_Fn}) shows that an invariant fiber of a Riccati equation
contains at least one singular point and at most $2$ singular points.

Next, let $\Gamma$ be a subgroup of ${\rm PSL}\, (2,\C)$ along with a chosen generating set $\{M_1, \ldots ,M_{k-1}\}$.
Choose also (pairwise distinct) points $\{p_1, \ldots ,p_k\}$ in $\CP^1$ and set $B=\CP^1 \setminus \{p_1, \ldots ,p_k\}$.
The following proposition plays a basic role in our paper. This proposition goes back to Brikhoff
\cite{Birkhoff-1} and \cite{Birkhoff-2} though an independent treatment was provided by A. Lins-Neto in
\cite{LinsNeto}. For the convenience of the reader,
we formulate an accurate statement (Proposition~\ref{Riccati_foliation-F1}) below and provide a self-contained
proof that parallels the one given in \cite{LinsNeto}. Besides making the paper more self-contained, the
proof given here also singles out a certain amount of flexibility in the construction that will be
helpful in the course of the paper.

\begin{prop}
\label{Riccati_foliation-F1}
{\rm (cf. \cite{Birkhoff-1} and \cite{LinsNeto})}
With the above notation, there exists a Riccati foliation $\fol$ on the surface $F_1$ satisfying the following conditions: 
\begin{itemize}
  \item The foliation $\fol$ leaves invariant exactly~$k$ fibers sitting, respectively, over the points $\{p_1, \ldots ,p_k\}$. All these fibers
  are simple.
  \item For each $i \in \{1, \ldots ,k-1\}$, the local holonomy map arising from a small simple loop around the point $p_i$ coincides with the automorphism
  of $\CP^1$ identified with the matrix $M_i$. 
  \item The local holonomy map arising from a small simple loop around the point $p_k$ coincides with the automorphism of
  $\CP^1$ arising from the matrix $M_k=(M_{k-1} \ldots M_1)^{-1}$.
\end{itemize} 
In particular, the global holonomy group of the foliation $\fol$ coincides with $\Gamma$.
\end{prop}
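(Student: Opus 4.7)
The plan is to build $\fol$ in two stages: first as a suspension producing a foliated $\CP^1$-bundle over the punctured sphere $B = \CP^1 \setminus \{p_1,\ldots ,p_k\}$, then extended across each puncture by gluing a local Riccati model; finally, to bring the resulting Hirzebruch surface to $F_1$ via elementary transformations.

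For the suspension step, define a representation $\rho : \pi_1 (B, b_0) \to \psl$ by sending a standard set of generators $\gamma_1, \ldots , \gamma_k$ of $\pi_1(B)$, where $\gamma_i$ is a small positive loop around $p_i$, to $M_1, \ldots , M_{k-1}, M_k = (M_{k-1}\cdots M_1)^{-1}$, respectively. This is consistent with the unique relation $\gamma_1 \cdots \gamma_k = 1$ of $\pi_1(B)$. Form the quotient of $\widetilde B \times \CP^1$ by the diagonal action $\gamma : (\tilde b, y) \mapsto (\gamma \cdot \tilde b, \, \rho(\gamma)^{-1} y)$ to obtain a $\CP^1$-bundle $\calp_0 : S_0 \to B$ carrying a smooth foliation $\fol_0$ transverse to the fibers and whose global holonomy is precisely $\Gamma$.

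To extend $(S_0, \fol_0)$ across each puncture $p_i$, choose a small disc $\Delta_i \subset \CP^1$ around $p_i$ and fix a linear local Riccati model on $\Delta_i \times \CP^1$ whose monodromy around $\{0\}\times\CP^1$ is conjugate to $M_i$. Up to conjugation in $\psl$ every element of $\psl$ is the time-one map of a one-parameter subgroup, so one can take $x\partial/\partial x + \alpha y \partial/\partial y$ (with $e^{2\pi i \alpha}$ the eigenvalue ratio of $M_i$) in the hyperbolic or elliptic case, and $x\partial/\partial x + x \partial/\partial y$ in the parabolic case. In either case $\{x=0\}$ is a simple invariant fiber with the prescribed local holonomy. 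Restricted to the punctured disc $\Delta_i^* = \Delta_i \setminus \{p_i\}$, both this local model and $S_0|_{\calp_0^{-1}(\Delta_i^*)}$ are foliated $\CP^1$-bundles with identical monodromy $\langle M_i \rangle$, hence are foliated-isomorphic. Gluing via any such isomorphism compactifies $(S_0,\fol_0)$ to a $\CP^1$-bundle $S \to \CP^1$ endowed with a Riccati foliation $\fol$ whose invariant fibers are exactly those above $p_1, \ldots , p_k$, all simple and with prescribed local holonomies.

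Since any $\CP^1$-bundle over $\CP^1$ is some $F_n$, it only remains to bring $n$ to $1$. This is achieved by a finite sequence of elementary transformations: blow up a point $q$ lying on a non-invariant fiber of $\calp$ and off the singular locus of $\fol$, then contract the strict transform of the fiber through $q$. The resulting surface is again a $\CP^1$-bundle carrying a Riccati foliation with the same prescribed invariant fibers, holonomies and singular points, but with the invariant $n$ changed by $\pm 1$ according to whether $q$ lies on a negative section of $S$ or off it. Iterating allows us to reach $F_1$. The main obstacle is the identification, on each $\Delta_i^*$, of the local model with $S_0|_{\calp_0^{-1}(\Delta_i^*)}$: this relies on the normal-form realization of $M_i$ as the time-one map of a one-parameter subgroup, after which the gluing and the subsequent birational adjustment are essentially formal.
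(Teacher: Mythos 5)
Your overall strategy (suspension over $B$, local Riccati models glued across the punctures, then elementary transformations to land on $F_1$) is exactly the paper's, and the suspension and gluing steps are essentially correct. Two concrete problems remain, one minor and one genuine. The minor one: your parabolic local model $x\,\partial/\partial x + x\,\partial/\partial y$ does not work. That vector field vanishes identically on $\{x=0\}$, so the foliation it defines is that of $\partial/\partial x + \partial/\partial y$, which is \emph{transverse} to $\{x=0\}$: there is no invariant fiber and no monodromy at all. The correct model is $x\,\partial/\partial x + c\,\partial/\partial y$ with $c$ a nonzero constant and $y$ an affine chart whose point at infinity is the parabolic fixed point (the paper takes $c=-1/(2\pi i)$, giving holonomy $y\mapsto y-1$).

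The genuine gap is in the last step. An elementary transformation centred at a regular point $q$ on a non-invariant fiber does \emph{not} preserve the set of invariant fibers: the exceptional divisor $E$ of the blow-up at $q$ is invariant by the transformed foliation (it comes from blowing up a regular point), and after contracting the strict transform of the fiber through $q$, the image of $E$ is a fiber of the new ruling which is \emph{invariant}, with trivial holonomy, and carrying a new singular point. So each elementary transformation adds one invariant fiber, and once you reach $F_1$ your foliation leaves invariant strictly more than $k$ fibers, contradicting the first bullet of the statement (and ruining the degree count $k-1$ used later in the paper). The paper closes exactly this gap with an additional step: each spurious invariant fiber has trivial holonomy, so a further gluing over a punctured disc replaces a neighbourhood of it by the horizontal foliation on $D\times\CP^1$, turning it back into a transverse fiber. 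You need this elimination step (or some variant, e.g.\ performing the elementary transformations at points of the already-invariant fibers so as not to create new ones) for the proof to be complete.
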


\begin{proof}
For each $i \in \{1, \ldots ,k\}$, let $\gamma_i \subset \CP^1$ be a small simple loop around the point $p_i \in \CP^1$.
The fundamental group $\pi_1(B)$ of $B=\CP^1 \setminus \{p_1, \ldots ,p_k\}$ is generated by $\gamma_1, \ldots,\gamma_k$
along with the relation
$\gamma_1 \ast \cdots \ast \gamma_k = {\rm id}$. We then define a representation $\rho$ from $\pi_1 (B)$ in ${\rm PSL}\, (2,\C)$
by letting $\rho(\gamma_i)=M_i$ for $i=1, \ldots ,k$. The homomorphism $\rho$ is well defined since $M_k = (M_{k-1} \ldots M_1)^{-1}$
so that $M_k M_{k-1}\ldots M_1 = {\rm id}$. Also, by construction, we have $\rho(\pi_1(B))=\Gamma$.

Next, we use the standard {\it suspension}\, construction to obtain a $\CP^1$-bundle $N$ over $B$
equipped with a foliation $\cald$ which is transverse to its fibers and whose global holonomy group is $\Gamma$.
In fact, the holonomy map associated to $\cald$ and arising
from a small loop $\sigma_i \subset B$ encircling the missing point $p_i$ is precisely the automorphism induced by $M_i$. 

The manifold $N$ is clearly open since the basis $B$ is so. To obtain a compact manifold and an (extended)
singular foliation, we will ``fill in'' each of the missing fibers in $N$.
Clearly, denoting by $\calp : N \rightarrow B$ the bundle projection,
it suffices to show how to ``fill in'' the fiber $\calp^{-1} (p_1)$ over the point $p_1$.
For this, we consider a small disc $D$
around $p_1$ whose boundary $\partial D$ is identified with the loop $\sigma_1$. By means of a local coordinate
$u$, $D$ can be thought of as a disc
around $0 \in \C$. First, we will construct a Riccati foliation $\fol_1$ on the product $D \times \CP^1$ having a single invariant
fiber, which sits over $0 \in D \subset \C$, and whose holonomy is given by the matrix $M_1$.
To construct the foliation $\fol_1$, we then consider coordinates $(u,v)$ on $D \times \CP^1$ where
$u$ is as above and $v$ is an affine coordinate on $\CP^1$. Then the foliation $\fol_1$ is given by the
integral curves of the following vector fields
\[ \begin{cases} 
u \frac{\partial}{\partial u}-(\frac{1}{2\pi i}) \frac{\partial}{\partial v}, & \mbox{if $M_1$ is parabolic,} \\
u \frac{\partial}{\partial u}+av \frac{\partial}{\partial v}, & \mbox{otherwise.} \\
\end{cases}
\]
It is clear that the foliation above admits a holomorphic extension as a Riccati foliation to $D \times \CP^1$.
Moreover, if $M_1$ is not parabolic, then the corresponding holonomy map fixes the point $v=0$ and a direct
inspection shows that the multiplier of this fixed
point is $e^{2\pi i a}$ ($a \in \C^{\ast}$ and $\Re \, (a)  \in [-1,1]$, where $\Re \, (a)$ stands for the real part
of $a$). In turn, when $M_1$ is parabolic, the holonomy map is given by $v \mapsto v-1$ so that
it has a single fixed point corresponding to $v = \infty$ whose multiplier equals~$1$. In particular, note that the choice
$a=\pm 1$ leads to trivial holonomy.

To complete the ``filling'' of the ``missing'' fibers, it suffices to
show that it is possible to glue together the foliations $\cald$ and $\fol_1$ over the punctured disc $D^{\ast}$.
Gluing these foliations together amounts to constructing a holomorphic diffeomorphism $h$ from $D^{\ast} \times \CP^1$
to $\calp^{-1} (D^{\ast})$ taking the foliation $\fol_1$ to the foliation $\cald$. Though not indispensable,
at this point it is convenient
to remind the reader that every holomorphic $\CP^1$-bundle is holomorphically locally trivial owing to
a theorem due to Fisher and Grauert \cite{fisher}. To begin, we use the common coordinate $u$
and fix a base point $q \in D^{\ast}$. The fiber $(q,\CP^1)$ in $D^{\ast} \times \CP^1$ is endowed
with the coordinate $v$. To identify $(q,\CP^1)$ with the fiber $\calp^{-1} (q)$ we need to choose a affine coordinate $w$ on $\calp^{-1} (q)$. The choice
of $w$ is made as follows:
\begin{itemize}
	\item If $a=\pm 1$, i.e., the holonomy map coincides with the identity. Then $w$ is an arbitrary affine coordinate in $\calp^{-1} (q)$
	and the identification is $v=w$.

    \item If the holonomy map is neither the identity nor parabolic. Then we choose $a$ and $w$ so that the origin of $w$ coincides
    with a fixed point of the holonomy map having multiplier equal to $e^{2\pi i a}$ and set $v=w$.
    
    \item If the holonomy map is parabolic. Then the origin of $w$ is the fixed point of the holonomy map and the identification is $w=1/v$.
\end{itemize}
The diffeomorphism $h$ is then defined on $(q,\CP^1)$ by declaring that $h$ takes points of $(q,\CP^1)$ to points
in $\calp^{-1} (q)$ in accordance with the identification of the corresponding fibers. It then remains to extend $h$ to
$D^{\ast} \times \CP^1$. This is done by following the standard lifting path method for the
leaves of the foliations $\fol_1$ and $\cald$ (the latter restricted to $\calp^{-1} (D^{\ast})$). The path 
liftings are clearly possible since the restriction of the bundle projection to any (regular) leaf
of either $\fol_1$ or $\cald$ is a covering map of the base (the foliations are Riccati).
Furthermore, the diffeomorphism $h$ is globally well defined since its extension over a loop around
$0 \in \C$ coincides with the initial definition on $(q,\CP^1)$. In fact, $h$ on $(q,\CP^1)$ is
determined by the identification provided by the coordinates $v$ and $w$ so that it conjugates
the corresponding holonomy maps.

Summarising what precedes, the gluing construction described above allows us to define a Riccati
foliation, still denoted by $\cald$, on a compact surface $N'$ that happens to be a $\CP^1$-bundle over $\CP^1$.
Moreover, the Riccati foliation $\cald$ possesses exactly $k$ invariant
fibers and satisfies the required holonomy conditions. Finally, being a $\CP^1$-bundle over $\CP^1$,
the surface $N'$ is a Hirzebruch surface $F_n$, $n \geq 0$. Thus, to complete the proof of the proposition,
it only remains to check that we can assume without loss of generality that $N'$ is actually $F_1$.

Assuming the resulting surface $F_n$ is different from $F_1$, we will show how the surface can be modified
to become $F_1$ while keeping all the conditions on the transformed foliation (still denoted by $\cald$).
This will be done by constructing a convenient birational map
between $F_1$ and $F_n$. This birational map is obtained by composing successive (birational) maps
from $F_n$ to $F_{n-1}$ which are defined as follows. Pick a base point $q \in F_n$ which is away from
the null section (i.e., the section of self-intersection $-n$)
and lies in a fiber $\calp^{-1} (q_0)$ transversely intersecting the foliation $\cald$. The surface $F_n$ is then
blown up at $q$ so that the new surface $\widetilde{F}_n$ possesses now two rational curves of self-intersection~$-1$.
Namely, the exceptional divisor $E$ and the transform $\widetilde{\calp^{-1} (q_0)}$
of the fiber $\calp^{-1} (q_0)$. The blown up surface also possesses a rational curve of self-intersection~$-n$
given by the transform of the null section.
Next collapse the $(-1)$-curve $\widetilde{\calp^{-1} (q_0)}$ to obtain a new surface. We claim the following:

\vspace{0.2cm}

\noindent {\it Claim}. The surface obtained by collapsing the transform of $\calp^{-1} (q_0)$ is $F_{n-1}$ and the transform
of $E$ is a fiber of the corresponding fibration. The foliation $\cald$ induces a new foliation $\cald_1$ on
$F_{n-1}$ having an additional
invariant fiber, namely the transform of $E$. This invariant fiber, however, has trivial holonomy.

\vspace{0.2cm}

\noindent {\it Proof of the Claim}. It is clear that by collapsing of $\widetilde{\calp^{-1} (q_0)}$, the
transform of $E$ fits inside the previous fibration of $F_n$ so that the resulting surface is again a
$\CP^1$-bundle over $\CP^1$ and, hence, a Hirzebruch surface.
However, since the transform of the null section on $\widetilde{F}_n$ intersects $\widetilde{\calp^{-1} (q_0)}$
at a single point and transversely, its
image on the resulting Hirzebruch yields a section of self-intersection $-n+1$. Thus, the surface obtained
at the end of the procedure is, indeed, $F_{n-1}$, see \cite{barth}.

As to the foliation induced by $\cald$ on $F_{n-1}$, note that $E \subset \widetilde{F}_n$ is invariant by
the blown up foliation $\widetilde{\cald}$ since it comes from blowing up a regular point of $\cald$. For the same
reason, $E$ contains exactly one singular point of $\cald$ so that the regular part of $E$ (leaf of $\widetilde{\cald}$)
is simply connected and, hence, has trivial holonomy.
The remainder of the claim follows immediately from these two remarks.\qed

By successively applying the previous construction, we obtain a Riccati foliation $\cald'$ on the surface $F_1$ which satisfies
the required conditions in terms of holonomy but which possesses finitely many additional invariant fibers, each of them
carrying trivial holonomy. To finish the proof, we just need to show how these invariant fibers with trivial holonomy
can be ``eliminated''. This means that the foliation $\cald'$ can be modified, without changing its local holonomy maps,
to yield a new foliation for which the previous invariant fiber becomes a regular transverse fiber. The proof of
this assertion, however, is rather simple. It is again a gluing procedure. Fix then one such
fiber $\calp^{-1} (q)$. By means of a local coordinate $u$ on the base, we identify $\calp^{-1} (q)$ to the product
$D \times \CP^1$. In turn, on $D \times \CP^1$, we consider the horizontal foliation $\mathcal{H}$. Again, the proof
consists of gluing together the foliations $\cald'$ and $\mathcal{H}$ over $\calp^{-1} (D^{\ast})$
and $D^{\ast} \times \CP^1$. The construction of the gluing diffeomorphism
$h : D^{\ast} \times \CP^1 \rightarrow \calp^{-1} (D^{\ast})$
is, however, identical to the previous construction. In fact, the local holonomy maps arising from $\cald'$
and from $\mathcal{H}$ around the fibers in question are both trivial so that the construction carried out above
still applies. This ends the proof of Proposition~(\ref{Riccati_foliation-F1}).
\end{proof}

\begin{remark}
	\label{summarizingpoints_proposition}
	Note that the invariant fibers for the Riccati foliation $\fol$ constructed in Proposition~\ref{Riccati_foliation-F1}
	are all simple since the singular points of the foliation $\fol$ always have
	a non-zero eigenvalue associated with the direction transverse to the invariant fiber.
\end{remark}

The paper being mainly devoted to constructing foliations on the complex projective plane $\CP^2$, we need to make
accurate what is understood by a {\it Riccati foliation on $\CP^2$}.
Since the blow up of the surface $\CP^2$ at an arbitrary point leads to a surface isomorphic to the surface $F_1$,
the definition becomes very straightforward.

\begin{defi}\label{Riccatifoliations_onCP2}
A foliation on the complex projective plane $\CP^2$ is said to be a Riccati foliation if it is
obtained from a Riccati foliation $\fol$ on the Hirzebruch surface $F_1$ by collapsing the $(-1)$-rational curve
in $F_1$. In other words, a foliation $\fol_{\Pp}$
on $\CP^2$ is a Riccati foliation if it possesses a singular point whose blow up leads to a Riccati foliation on $F_1$.
\end{defi}

In what follows, we will mostly be interested in Riccati foliations on $\CP^2$ or on $F_1$, whose global
holonomy groups are certain specific subgroups of
$\psl$ along with a particular choice of generating set. Compared to the statement of
Proposition~(\ref{Riccati_foliation-F1}), the choice of points $p_1, \ldots ,p_k$ corresponding to the invariant fibers
is, however, of little importance. Thus, fixed a
group $\Gamma \subset \psl$ along with a generating set $\{M_1, \ldots ,M_{k-1}\}$,
the phrase {\it the Riccati foliation arising from $\Gamma$ and $\{M_1, \ldots ,M_{k-1}\}$} will be used to refer
to the Riccati foliation $\fol$ on the surface $F_1$ obtained from
Proposition~\ref{Riccati_foliation-F1} for an arbitrary choice of points $p_1, \ldots ,p_k$.
Alternatively, the reader may fix a particular choice throughout the text, for example, setting
$p_1 = 0$, $p_2 =1$, $p_3 = \infty$, and then $p_i = i$ for $i=4, \ldots ,k$.
Similarly, we will refer to the Riccati foliation on $\CP^2$ (arising from $\Gamma$ and $\{M_1, \ldots ,M_{k-1}\}$)
meaning the foliation on $\CP^2$ obtained by blowing down the previous Riccati foliation on $F_1$.
In terms of notation, Riccati foliations on $F_1$ will typically be denoted by $\fol$ whereas $\fol_{\Pp}$
will stand for Riccati foliations on $\CP^2$.

Let us close this section with a comment about the {\it degree of foliations}\, on $\CP^2$. First,
note that every homogeneous polynomial vector field $X$ on $\C^3$ induces a foliation on
$\CP^2$ unless $X$ is multiple of the radial vector field $R = x_1 \partial /\partial x_1
+ x_2 \partial /\partial x_2 + x_3 \partial /\partial x_3$. In fact, $X$ being homogeneous,
the direction associated with the vector $X(p)$,
$p \in \C^3 \setminus \{ (0,0,0)\}$ does not change over the radial line of $\C^3$ passing through $p$.
If, moreover, $X$ is not a multiple
of the radial vector field, then for a generic point $p$ the vector $X(p)$ induces a well defined direction
on $T_{q = P (p)} \CP^2$ where $P: \C^3 \setminus \{ (0,0,0)\} \rightarrow \CP^2$ stands for the canonical projection.
In particular, $X$ defines a singular foliation on $\CP^2$. Conversely, every singular holomorphic
foliation on the surface $\CP^2$ can be obtained out of a homogeneous polynomial vector field
on $\C^3$ by means of the preceding construction, see for example \cite{IlyashenkoYakovenko}.
If, in addition, we require the zero set of
the homogeneous vector field $X$ to have codimension at least~$2$, then every pair of homogeneous vector fields
$X$ and $X'$ inducing the same foliation on $\CP^2$ must have the same degree.
Thus, the following definition makes sense:

\begin{defi}\label{degreeoffoliations_onCP2}
	The degree of a foliation $\fol$ on $\CP^2$ is the degree of a homogeneous polynomial vector field $X$
	on $\C^3$ having singular set of codimension at least~$2$ and inducing $\fol$ on $\CP^2$ by means of
	the canonical projection $\C^3 \setminus \{ (0,0,0)\} \rightarrow \CP^2$.
\end{defi}

In closing this section, let us mention that the degree of a foliation $\fol$ on $\CP^2$ can alternatively
be defined as the number of tangencies of $\fol$ with a generic projective line, cf. \cite{IlyashenkoYakovenko}
and \cite{HelenaandI}. Finally, in the special case of Riccati foliations the following lemma is well known
and its straightforward proof can safely be left to the reader.

\begin{lemma}
\label{degree_formula}
Let $\fol$ be a Riccati foliation on $F_1$ and denote by $\fol_{\Pp}$ the corresponding Riccati foliation on
$\CP^2$. Assume that $\fol$ has exactly $k$ invariant fibers, all of them being simple.
Then the degree of $\fol_{\Pp}$ as a foliation on $\CP^2$ equals $k-1$.\qed
\end{lemma}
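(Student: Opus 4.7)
The plan is to use the characterization of $\deg(\fol_{\Pp})$ as the number of tangencies, counted with multiplicity, between $\fol_{\Pp}$ and a generic projective line in $\CP^{2}$, recalled just before the statement, and to transfer this count to the Hirzebruch surface $F_{1}$ via the blow-down $\pi : F_{1} \to \CP^{2}$ that collapses the $(-1)$-section $S_{0}$ to a point $p_{0}$. Picking a generic line $L \subset \CP^{2}$ not passing through $p_{0}$, its strict transform $\tilde{L} = \pi^{-1}(L)$ is a smooth rational curve on $F_{1}$ which is disjoint from $S_{0}$ and from the singular set of $\fol$; in $\mathrm{Pic}(F_{1}) = \Z s_{0} \oplus \Z f$, the class of $\tilde{L}$ equals $s_{0}+f$, as forced by $\tilde{L}^{2}=1$ and $\tilde{L}\cdot s_{0}=0$. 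Since $\pi$ is biholomorphic near $\tilde{L}$, counting tangencies of $\fol_{\Pp}$ with $L$ reduces to counting tangencies of $\fol$ with $\tilde{L}$ on $F_{1}$.

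The next step is to invoke the standard tangency formula $\mathrm{Tang}(\fol,\tilde{L}) = c_{1}(N_{\fol}) \cdot \tilde{L} - 2$, valid for a smooth rational curve disjoint from the singular set of $\fol$; here $N_{\fol}$ is the normal line bundle of the foliation, and the $-2$ is the degree of $T_{\tilde L} \cong T_{\CP^{1}}$. The heart of the proof is the identification of $c_{1}(N_{\fol})$, for which I would restrict the defining twisted $1$-form $\omega : T_{F_{1}} \to N_{\fol}$ to the vertical tangent line bundle $V = T_{F_{1}/\CP^{1}}$. The resulting map $V \to N_{\fol}$ of line bundles vanishes exactly along the tangency locus of $\fol$ with the fibration $\calp$, i.e.\ along the union of the $k$ invariant fibers; because these fibers are simple (cf.\ Remark~\ref{summarizingpoints_proposition}), each contributes with multiplicity one to this vanishing divisor, whose class is therefore $kf$. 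Combining $N_{\fol} \cong V \otimes \mathcal{O}(kf)$ with the standard identity $c_{1}(V) = c_{1}(T_{F_{1}}) - \calp^{*} c_{1}(T_{\CP^{1}}) = (2 s_{0} + 3 f) - 2 f = 2 s_{0} + f$, I obtain $c_{1}(N_{\fol}) = 2 s_{0} + (k+1) f$.

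A direct intersection computation then yields $c_{1}(N_{\fol}) \cdot \tilde{L} = (2 s_{0} + (k+1) f) \cdot (s_{0} + f) = -2 + 2 + (k+1) + 0 = k+1$, whence $\mathrm{Tang}(\fol, \tilde{L}) = (k+1) - 2 = k-1$, as claimed. The main obstacle, and essentially the only non-routine step, is the identification of $c_{1}(N_{\fol})$: everything rests on the fact that each simple invariant fiber contributes with multiplicity exactly one to the vanishing divisor of the restricted map $V \to N_{\fol}$, which in turn follows from the local model of a Riccati foliation at a simple singularity (non-zero eigenvalue transverse to the invariant fiber). The remaining steps are routine intersection-theoretic bookkeeping on the Hirzebruch surface $F_{1}$.
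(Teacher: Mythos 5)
Your argument is correct. The paper itself leaves this lemma without proof (it is declared ``straightforward'' and the statement carries a \verb|\qed|), so there is no written argument to compare against; but your intersection-theoretic route is a clean and complete way to do it. The key identity $c_1(N_{\fol}) = c_1(V) + kf$ is exactly right: in the affine chart where $\fol$ is given by $F(x)\,\partial_x + (c_0 + c_1 y + c_2 y^2)\,\partial_y$ as in Equation~(\ref{simplenormalform_Riccati_Fn}), the composite $V \to T_{F_1} \to N_{\fol}$ is contraction of $\partial_y$ with the dual form, which equals $-F(x)$, so the vanishing divisor is precisely $\{F=0\}$ counted with the order of vanishing of $F$ --- and simplicity of the fibers is exactly the statement that each zero is simple, so the class is $kf$ as you claim. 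The remaining steps ($c_1(V) = 2s_0 + f$ from $-K_{F_1} = 2s_0+3f$, the class $s_0+f$ of the transform of a line missing the blown-up point, the tangency formula $N_{\fol}\cdot \tilde L - 2$ for a rational curve avoiding ${\rm Sing}(\fol)$, and the locality of tangency counts under the biholomorphism $\pi$ near $\tilde L$) all check out, giving $k+1-2 = k-1$. The only alternative the authors may have had in mind is the more pedestrian computation directly from the normal form~(\ref{simplenormalform_Riccati_Fn}): taking $\deg F = k$ with the fiber at infinity non-invariant, one blows down and reads off the degree of the resulting homogeneous vector field on $\C^3$ (Definition~\ref{degreeoffoliations_onCP2}); your approach avoids tracking the degrees of the $c_i$ and is arguably tidier.
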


\section{Examples of Riccati foliations and Theorem~B}\label{examplesetc}

The purpose of this section is to detail two applications of Proposition~\ref{Riccati_foliation-F1}.
First, we will show how examples of foliations of degree~$2$ on $\CP^2$ satisfying
the conditions of Theorem~A with real analytic Levi-flats can be obtained. Similarly, we will also construct foliations
of degree~$3$ as in Theorem~A having transversely fractal Levi-flats.
Then we will turn to Theorem~B and apply
again Proposition~\ref{Riccati_foliation-F1} to construct the corresponding foliations.

\subsection{Two examples of Riccati foliations having degrees~2 or~3}

Let us start with {\it triangular groups} or more precisely, with triangular groups {\it without cusps}. This is as follows.
Let $m_i \in \Z_+^{\ast}$, $i=1,2,3$, be three positive integers satisfying the condition
\begin{equation}
\frac{1}{m_1} + \frac{1}{m_2} + \frac{1}{m_3} < 1 \, , \label{hyperbolictriangulargroups_1}
\end{equation}
and set $\textsl{m} = (m_1,m_2,m_3)$.
It is well know that, up to conjugation, there is exactly one triplet $(\xi_1, \xi_2, \xi_3)$ of elements
in $\psl$ verifying the relations
\begin{equation}
\xi_1 \xi_2 \xi_3 = \xi_1^{m_1} = \xi_2^{m_2}= \xi_3^{m_3} = {\rm id} \, . \label{relations-defininggamma}
\end{equation}
The subgroup $\Gamma$ of $\psl$ generated by $\xi_1$, $\xi_2$, and $\xi_3$ is, therefore, unique up to inner automorphisms
of $\psl$. The group $\Gamma$ is called the {\it triangular group associated
with the unordered triplet $\textsl{m} = (m_1,m_2,m_3)$}. These triangular groups happen
to be discrete and conjugate to a subgroup of $\pslR$. In other words, triangular groups $\Gamma$
as above are Fuchsian groups. Clearly, the quotient of the hyperbolic disc $\D$ by $\Gamma$ is naturally a
{\it spherical orbifold}\, with three singular points, see for example \cite{svetlana}.

\begin{ex}\label{Riccatifortriangulargroup}
Choose $\textsl{m} = (m_1,m_2,m_3)$ with $m_i \in \Z_+^{\ast}$, $i=1,2,3$, satisfying
condition~(\ref{hyperbolictriangulargroups_1})
and denote by $\fol^{(\textsl{m})}$ the Riccati foliation on the surface $F_1$
obtained by applying Proposition~\ref{Riccati_foliation-F1} to the group $\Gamma$
with generating set $\{ \xi_1, \xi_2, \xi_3 \}$.

The first family of Riccati foliations on $\CP^2$ to be considered here is, therefore,
$\fol^{(\textsl{m})}_{\Pp}$. In other words, for each fixed $\textsl{m}$, $\fol^{(\textsl{m})}_{\Pp}$ is the
foliation obtained from $\fol^{(\textsl{m})}$ by collapsing the $(-1)$-curve in the surface $F_1$.

Note that the entire family of foliations $\fol^{(\textsl{m})}_{\Pp}$ is constituted by foliations of degree~$2$
as it follows from the combination of Remark~\ref{summarizingpoints_proposition} and Lemma~\ref{degree_formula}.
Alternatively, we can consider the $3$-parameter family
\begin{eqnarray*}
	X_{(\alpha_1, \alpha_2, \alpha_3)} & = &  [\alpha_1z_1^2 + (1-\alpha_1)(z_1z_2 + z_1z_3 - z_2z_3)] \frac{\partial}{\partial z_1}  \\
	& & \, + \,  [\alpha_2z_2^2 + (1-\alpha_2)(z_1z_2 - z_1z_3 + z_2z_3)] \frac{\partial}{\partial z_2}  \\
	& & \, +  \, [\alpha_3z_3^2 + (1-\alpha_3)(-z_1z_2 + z_1z_3 + z_2z_3)] \frac{\partial}{\partial z_3} \, 
\end{eqnarray*}
of quadratic vector fields on $\C^3$ which corresponds to the family of
Halphen vector fields thoroughly studied by A. Guillot in \cite{adolfo-IHES}. In particular, the existence of a Levi-flat
for the foliation $\fol^{(\textsl{m})}_{\Pp}$ is already pointed out in \cite{adolfo-IHES}.
As a family of homogeneous vector fields of degree~$2$ on $\C^3$, the foliations they induce on $\CP^2$ are all
of degree~$2$. To obtain the foliation $\fol^{(\textsl{m})}_{\Pp}$ it suffices to choose the parameters
$\alpha_1$, $\alpha_2$, and $\alpha_3$ so that $m_i = (\alpha_1 + \alpha_2 + \alpha_3 -2) /\alpha_i$.

\begin{remark}\label{OnHalphenPSL2Z}
A case of Halphen vector fields also discussed in \cite{adolfo-IHES} and escaping the setting of Theorem~A occurs
for $\alpha_1 =-6$, $\alpha_2 =-4$, and $\alpha_3 =0$. In this case, the resulting triangular group acquires a cusp
and, in fact, is isomorphic to $\pslZ$. In other words, $\xi_1$ and $\xi_2$ are elliptic of orders~$2$ and~$3$
but $\xi_3$ {\it is parabolic}. This example still possesses a singular Levi-flat whose structure is, however,
different from what is described in Theorem~A.
Also the description of the corresponding positive foliated harmonic currents requires significantly different arguments
and will be discussed in a future work.
\end{remark}
\end{ex}

We can now proceed with an example of Riccati foliation of degree~$3$ on $\CP^2$ giving rise to a transversely
fractal Levi-flat.

\begin{ex}\label{Riccati_quasifuchsian}
In this example, we begin by considering a spherical orbifold with $4$ singular points (of orbifold type).
We then consider the Fuchsian group $\Gamma'$ arising from this orbifold by means of
Riemann uniformisation theorem.
In slightly more concrete terms, we can use Poincar\'e theorem to ensure the existence of these orbifolds.
In fact, if $m_1, m_2, m_3$ and $m_4$ are the orders of the orbifold-type singular points, then the condition for
the existence of the desired Fuchsian group $\Gamma'$ becomes (see for example \cite{svetlana})
\begin{equation}
\frac{1}{m_1} + \frac{1}{m_2} + \frac{1}{m_3} + \frac{1}{m_4}< 2. \label{justonequotation}
\end{equation}
The construction of one such Fuchsian group is simple enough to be recalled. Consider four radii issued from
$0 \in \D \subset \C$ with adjacent radii defining an angle of $\pi/2$ (for example $\R_+$, $i \R_+$, $\R_-$ and
$i \R_-$). Now for $\tau \in (0,1)$, choose a point in each radii such that its Euclidean distance to
$0 \in \D \subset \C$ is $\tau$. Next define a quadrilateral $\mathcal{Q}$ by joining points lying in adjacent sides by
segments of hyperbolic geodesic. The sides of $\mathcal{Q}$ are denoted by $l_1, l_2, l_3$ and $l_4$. Then on top of each side
$l_i$ we construct an isosceles triangle whose angle opposed to the side $l_i$, i.e., the angle defined by
the two sides of the triangle having the same length, equals $2\pi/m_i$. The result of this construction is,
therefore, a star-shaped hyperbolic octagon whose sides are labeled $\xi_1, \xi_1', \ldots ,
\xi_4, \xi_4'$ (for the orientation, see figure 17 on page 93 of \cite{svetlana}).
Furthermore, by considering the asymptotic values of the area of this octagon when $\tau \rightarrow 0$ and
$\tau \rightarrow 1$, it becomes clear that $\tau$ can be chosen
so that the area in question equals
\begin{equation}
2\pi \left[ 2 - \sum_{i=1}^4 \frac{1}{m_i} \right] \, . \label{sumlessthan2}
\end{equation}
By construction, the sides $\xi_i$ and $\xi_i'$ have the same length ($i=1,2,3,4$). In particular, there
are orientation-preserving automorphisms $H_i$ of $\D$ such that $H_i (\xi_i') = \xi_i$. Let then
$\Gamma' \subset {\rm Aut}\, (\D) \simeq \pslR \subset \psl$ generated by $H_1, H_2, H_3$ and $H_4$. The conditions
of Poincar\'e's polygon theorem are, hence, matched so that $\Gamma'$ is discrete, and, hence, Fuchsian,
moreover the quotient of $\D$ by $\Gamma'$ identifies with the surface (orbifold) obtaining by following
the above indicated gluing of the sides of the octagon. It is straightforward checking that this quotient
is topologically a sphere with four marked (singular) points corresponding to the vertices of the octagon
arising from the isosceles triangles (the vertices with angles $2\pi/m_i$). This completes the construction
of a Fuchsian group $\Gamma'$ with the desired properties.

The Kleinian group $\Gamma$ desired for this example is not the Fuchsian group $\Gamma'$ but rather some
{\it quasiconformal}\, deformation of $\Gamma'$. Recall that a discrete subgroup of $\psl$ is said to be
quasifuchsian if it leaves invariant a Jordan curve $\mathcal{J} \subset \CP^1$ which is not
a real projective circle (in which case the group would be Fuchsian). In particular, the curve
$\mathcal{J}$ will be nowhere differentiable and of Hausdorff dimension strictly greater than~$1$
owing to \cite{bowen}.

Now, whereas the theory of quasiconformal deformations and Bers Simultaneous Uniformisation
theorem are better known in the case of Riemann surfaces, similar statements still hold in the case of Riemann
surface orbifolds. The reader is referred to \cite{nag}, Section~$4.4$, for the deformation theory of Fuchsian
groups containing elliptic elements. In particular, there follows that the Teichm\"{u}ller space
of these spherical orbifolds with four singular points, or equivalently the Teichm\"{u}ller space of a Fuchsian
group as $\Gamma'$ above, is of complex dimension~$1$.

Let then $R$ denote the spherical orbifold arising from the Fuchsian group $\Gamma'$. Bers Simultaneous Uniformisation
theorem assigns to each point $p$ in the Teichm\"{u}ller space of $R$ a quasifuchsian group
$\Gamma$, unique up to conjugation, satisfying the following conditions:
\begin{itemize}
	\item The group $\Gamma$ leaves invariant a Jordan curve $\mathcal{J}$. The connected components of $\CP^1 \setminus
	\mathcal{J}$ are then denoted by $D_1$ and $D_2$.
	
	\item The quotient $D_1 /\Gamma$ is isomorphic to the orbifold $R$ whereas the quotient $D_2 /\Gamma$
	is isomorphic to the spherical orbifold parametrised by the point $p$.
\end{itemize}

Summarising what precedes, fix four positive integers $m_1, m_2, m_3$ and $m_4$ satisfying estimate~(\ref{justonequotation}),
there exists a complex $1$ parameter family of quasifuchsian groups $\Gamma$, pairwise non-conjugate,
and parametrising spherical orbifolds with four singular points whose orders are precisely $m_1, m_2, m_3$ and $m_4$.

Finally, for our example of Riccati foliations with transversely fractal Levi-flats, we consider
the family of Riccati foliations $\fol^{\mathcal{Q}}$ on $F_1$ arising from the above family of
Fuchsian groups by means of Proposition~\ref{Riccati_foliation-F1}. Then the desired examples arise as
the corresponding Riccati foliations $\fol^{\mathcal{Q}}_{\Pp}$ on $\CP^2$ obtained by collapsing the $(-1)$-rational
curve in $F_1$. In particular, the global holonomy of the foliations $\fol^{\mathcal{Q}}_{\Pp}$ is
given by the mentioned quasifuchsien groups. The reader will also notice that all the foliations
$\fol^{\mathcal{Q}}_{\Pp}$ {\it are of degree~$3$}\, as follows from Lemma~\ref{degree_formula} since they
have exactly four invariant lines, all of them being simple.
\end{ex}

\subsection{High degree foliations with diffuse positive foliated closed currents}\label{ahlforscurrent_examples} In this section
we will prove Theorem~B by building on some of the previous ideas related to Riccati foliations and
to Proposition~\ref{Riccati_foliation-F1}. In the course of the section, we will also accurately formulate
several notions involving currents and foliations appearing both in Theorems~A and~B.

Let us begin with some basic notions about currents on a compact complex surface $M$. Let $\mathcal{D}^k$ denote the
space of (smooth) differential forms of degree~$k$ on $M$. The space $\mathcal{D}_k'$ of
{\it currents of dimension~$k$}\, (where $0 \leq k \leq 4$)
is the $C^{\infty}$-topological dual of $\mathcal{D}^k$. The {\it degree}\, 
of a current $T$ is also defined as the difference between the real dimension of $M$ and the dimension of $T$.
In local coordinates, a current $T$ of dimension~$k$ acts as a $(4-k)$-form on the coefficients of a $k$-differential
form $\omega$. In fact, a current $T$ of dimension~$k$ can be represented
as a $(4-k)$-differential form with distributional coefficients.

The exterior differential operator $d$, as well as the standard operators $\partial$ and $\overline{\partial}$,
are defined on currents by letting $\langle dT, \omega \rangle = (-1)^{k'+1} \langle T, d\omega \rangle$, where
$T$ is a current of degree~$k'$ and $\omega$ a $(3-k')$-differential form. The action of $\partial$ and
$\overline{\partial}$ on currents
is analogously defined. In particular, the space of currents can be stratified with bidegrees~$p,q$. Also,
a current $T$ will be called {\it closed}\, if $dT=0$ and it will be called {\it harmonic}\, if
$i\partial \overline{\partial} T =0$.

The complex dimension of $M$ being~$2$, we will particularly be interested in $(1,1)$-currents.
Recall that a $(1,1)$-form is said to be {\it weakly positive}\, if locally it takes on the form
$$
\sum_{i,j=1}^2 \alpha_{i,j} dz_i \wedge d\overline{z}_j,
$$
with the matrix $\{ \alpha_{i,j} \}$ being positive semi-definite at every point. In turn, a $(1,1)$-current is said
to be {\it positive}\, if the coupling $T \wedge \omega$ is a positive measure for every weakly positive form
$\omega$.

Assume now that $M$ is endowed with a (singular) holomorphic foliation $\fol$. Then, among all currents on $M$,
we may look for those providing special insight in the structure of the foliation $\fol$. This gives rise to the notion
of {\it foliated}\, or {\it directed}\, current.

\begin{defi}
	Let $M$ be a complex compact surface equipped with a singular holomorphic foliation $\fol$. A
	current $T$ on $M$ is said to be foliated, or directed by $\fol$, if we have $T (\beta) = 0$ whenever
	$\beta$ is a $2$-form vanishing on the tangent space of the foliation $\fol$.
\end{defi}

In foliated coordinates $(z_1,z_2)$ where the foliation $\fol$ is given by $\{ dz_2 = 0\}$,
every $2$-form $\beta$ vanishing on the tangent
space of the foliation $\fol$ must have the form $\alpha_1 dz_2 + \alpha_2 d\overline{z}_2$ for suitable
$1$-forms $\alpha_1$, $\alpha_2$. From this, we see that a $(1,1)$-foliated current $T$ must take on
the local form
$$
T = T(z_1,z_2) \; dz_2 \wedge d\overline{z}_2 \, ,
$$
where $T$ is identified with a distribution. In particular, $T$ is of type~$(1,1)$. Moreover, in the context of foliated
currents, the notion of being {\it a positive current}\, becomes
particularly clear, it simply means that {\it $T(z_1,z_2)$ is identified with a positive measure}.

Consider now the positive foliated form $T = T(z_1,z_2) dz_2 \wedge d\overline{z}_2$.
If, in addition, $T$ is {\it closed}, then the distributional derivative of $T(z_1,z_2)$ with respect to
$z_1$ and $\overline{z}_1$
must vanish. This means that $T(z_1,z_2)$ is essentially constant over the plaques of the foliation $\fol$ or,
equivalently, that $T(z_1,z_2)$ depends only on~$z_2$ and, hence, it naturally induces a positive measure
on the transverse space (a particular case of Rokhlin disintegration).

Similarly, if $T$ is {\it harmonic}, then $T(z_1,z_2)$ yields a harmonic function over the plaques of the foliation $\fol$
which is bounded above and below by strictly positive constants. By Rokhlin disintegration,
$T$ still defines a measure on the transverse space which is referred to as the {\it harmonic measure}.

According to Sullivan \cite{sullivan}, in the case of a regular foliation defined on a compact manifold,
positive foliated closed currents are in one-to-one correspondence with {\it transversely invariant measures}
as defined by Plante \cite{plante} and recalled below.

\begin{defi}\label{Plante_definition}
	Given a regular foliation $\fol$ on a compact complex manifold $M$, a transversely invariant measure for the foliation $\fol$
	consists of the following data:
	\begin{itemize}
        \item[(1)] A (finite) foliated atlas $\{ (U_i, \varphi_i, \Sigma_i)\}$, where $\Sigma_i$ stands for the transverse
        space.
        
        \item[(2)] For every $i$, a finite measure $\mu_i$ defined on the transverse space $\Sigma_i$.
        
        \item[(3)] Whenever $U_i \cap U_j \neq \emptyset$, the ``gluing map'' $h_{ij}$ defined by
        $\varphi_j \circ \varphi_i^{-1} (z_1,z_2) = (f(z_1,z_2),h_{ij} (z_2))$ satisfies
        $h_{ij}^{\ast} \mu_j = \mu_i$ whenever both sides are defined.

    \end{itemize}
\end{defi}

In the case of a singular foliation, we simply repeat the above definition on the open manifold
$M \setminus {\rm Sing}\, (\fol)$ while dropping the condition on finiteness of the foliated
atlas $\{ (U_i, \varphi_i, \Sigma_i)\}$. It is then clear that a (positive foliated) closed current
still yields a transversely invariant measure for the foliation $\fol$ in the sense of
Definition~\ref{Plante_definition}.
Only the converse, i.e., the building of such a current out of a given transversely invariant measure,
needs further attention.

We are now ready to begin the proof of Theorem~B. Fix $n \in \N$ and consider $\CP^1$ with $(n+1)$ marked
points $b_1, \ldots , b_{n+1}$. Fix also an elliptic element $\xi \in \psl$ representing an irrational
rotation and let $\Gamma$ be the subgroup generated by $\xi$. Let $\widetilde{\fol}_n$ be the
Riccati foliation on $F_1$ arising from Proposition~\ref{Riccati_foliation-F1} with $(n+1)$ simple invariant fibers
$C_1, \ldots , C_{n+1}$. More precisely,
the local holonomy map of the foliation $\widetilde{\fol}_n$ around each of the invariant fibers $C_1, \ldots ,C_n$
coincides with $\xi$ whereas the local holonomy map obtained by winding around
$C_{n+1}$ coincides with $\xi^{-n}$. Moreover, each invariant fiber
$C_i$ of the foliation $\widetilde{\fol}_n$ contains exactly two singular points of this foliation, namely:
one point denoted by $p_i$ where the quotient of the eigenvalues of the foliation
$\widetilde{\fol}_n$ is positive real (a``sink'') and a point denoted by $q_i$ where this quotient
is negative real (a``saddle''). Thus, the foliation $\widetilde{\fol}_n$ possesses~$2n +2$ singular points
and all of them are simple. It follows from the local models for the foliation $\widetilde{\fol}_n$ around the invariant
fibers $C_i$ described in the proof of Proposition~\ref{Riccati_foliation-F1}
that each point $p_i$ (resp. $q_i$) admits exactly two separatrices. Furthermore, one of these
separatrices coincides with the invariant fiber $C_i$ while the other separatrix,
which is also smooth, happens to be transverse to the invariant fiber $C_i$.
In the sequel, we let $S_{p_i}$ (resp. $S_{q_i}$) denote the
separatrix of $\widetilde{\fol}_n$ at the singular point $p_i$ (resp. $q_i$) that is transverse
to~$C_i$.

\begin{lemma} \label{rational curves}
Without loss of generality, we can assume that all the separatrices $S_{p_i}$ (resp. $S_{q_i}$) glue together
into a global rational curve of self-intersection~$1$
(resp. $-1$) invariant by the foliation $\widetilde{\fol}_n$.
\end{lemma}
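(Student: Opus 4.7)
The plan is to exploit the fact that $\xi$, being an elliptic element of $\psl$, has exactly two fixed points $F^+, F^- \in \CP^1$, which are also the fixed points of $\xi^{-n}$. In the suspension $N \to B$ arising from the first step of the proof of Proposition~\ref{Riccati_foliation-F1}, each of these points is fixed by the whole holonomy group $\Gamma = \langle \xi \rangle$, so they determine two disjoint global cross-sections $\Sigma^+, \Sigma^-$ of $N$, each being a leaf of the regular Riccati foliation $\cald$. The goal is to show that the gluing step of that proposition can be performed so that $\Sigma^{\pm}$ extend holomorphically across the missing fibers into global sections of the filled-in bundle, and that the subsequent birational reductions preserve them while producing the claimed self-intersections.

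For the first point, I would use the flexibility in choosing the gluing coordinate $w$ on each ``missing'' fiber $\calp^{-1}(p_i)$: the proof of Proposition~\ref{Riccati_foliation-F1} only requires the origin of $w$ to sit at a fixed point of the local holonomy. Taking this origin uniformly to be $F^-$ (and thus $\infty$ to be $F^+$) at every invariant fiber and setting $v = w$ in the local model $u \frac{\partial}{\partial u} + a_i v \frac{\partial}{\partial v}$, the local separatrices $\{v = 0\}$ and $\{v = \infty\}$ glue respectively to $\Sigma^-$ and $\Sigma^+$. Choosing the sign of $a_i$ so that the eigenvalue ratio is positive real along $\{v=0\}$ and negative real along $\{v=\infty\}$, one obtains $p_i \in \Sigma^-$ and $q_i \in \Sigma^+$, as desired. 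Since $\xi$ and $\xi^{-n}$ share the pair $\{F^+, F^-\}$, the same coherent labelling applies at the last invariant fiber $C_{n+1}$; this global compatibility of the local choices is the core new ingredient expressed by the lemma.

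For the second point, I would track $\Sigma^\pm$ through the elementary transformations $F_m \dashrightarrow \cdots \dashrightarrow F_1$ employed in the proof of Proposition~\ref{Riccati_foliation-F1}. Each transformation blows up a point $q$ in a non-invariant fiber and contracts the strict transform of that fiber; since $\Sigma^\pm$ are disjoint curves, the freedom in choosing $q$ allows us to keep it away from both sections. A standard computation based on $(\pi_* D)^2 = D^2 + (D \cdot E)^2$ for the contraction of a $(-1)$-curve $E$ shows that the self-intersection of a section missing $q$ increases by exactly $+1$. Iterating $m-1$ times sends $(\Sigma^-)^2 = -m$ to $-1$ and $(\Sigma^+)^2 = +m$ to $+1$. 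The concluding step of Proposition~\ref{Riccati_foliation-F1}, namely the gluing with horizontal foliations that eliminates the trivial-holonomy fibers, is transverse to both sections and leaves them intact. The resulting curves on $F_1$ are sections of a $\CP^1$-bundle over $\CP^1$, hence rational, invariant by $\widetilde{\fol}_n$, and have the claimed self-intersections. The main obstacle is the global coherence of the gluing choices in the first step; once this is secured, the rest is bookkeeping with intersection numbers.
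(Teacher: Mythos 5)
Your first step --- using the two fixed points of $\xi$ to produce two disjoint invariant sections and arranging the local gluings coherently so that the sinks all land on one of them and the saddles on the other --- is essentially the paper's argument (the paper phrases the same freedom as ``the position of the singular points can be permuted without affecting the holonomy''). The divergence, and the problem, is in your second step. If at each elementary transformation $F_m \dashrightarrow F_{m-1}$ you choose the centre $q$ \emph{away from both} sections, then after contracting the strict transform of the fibre through $q$ the images of the two sections both pass through the contracted point: their intersection number jumps from $0$ to $1$, and \emph{both} self-intersections increase by $1$. Starting from $(-m,+m)$ on $F_m$, after $m-1$ steps you arrive at $(-1,\,2m-1)$, not $(-1,+1)$; your own formula $(\pi_*D)^2=D^2+(D\cdot E)^2$ already shows this. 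To keep the two curves disjoint and make the bookkeeping close up, the centre must be taken \emph{on} $\Sigma^+$ (and off the null section) at every step, so that the positive section loses one unit per step while the negative one gains one, giving $(-(m-1),+(m-1))$ and eventually $(\mp 1)$. Even then, the final surgery of Proposition~\ref{Riccati_foliation-F1} that eliminates the trivial-holonomy fibres literally cuts out a fibred neighbourhood and reglues a product piece, so ``leaves them intact'' is an assertion that would itself need proof.

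The paper sidesteps all of this tracking by arguing a posteriori on the final surface $F_1$: once the separatrices are known to glue into two disjoint invariant sections $R_p$ and $R_q$, the Camacho--Sad index formula (the eigenvalue ratios at the $p_i$ are positive reals, at the $q_i$ negative reals) forces $R_p^2>0$ and $R_q^2<0$, and two disjoint sections of $F_1$ necessarily have self-intersections $+1$ and $-1$. I would recommend replacing your elementary-transformation bookkeeping by this direct argument; it uses only the end product of Proposition~\ref{Riccati_foliation-F1} and is insensitive to the choices made during the intermediate birational modifications.
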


\begin{proof}
Recall that the global holonomy group $\Gamma$ of the foliation $\widetilde{\fol}_n$ is generated by $\xi$
(i.e., $\Gamma = \langle \xi \rangle$) and that, up to conjugation, the action of $\Gamma$ on $\CP^1$ can
be represented in any fiber of the surface $F_1$ different from the invariant fibers $C_1, \ldots, C_{n+1}$.

Let $C_i$ be a invariant fiber and consider a generic (non-invariant) fiber $\Sigma$ near to $C_i$ in the following
sense: The local separatrices $S_{p_i}$, $S_{q_i}$ associated with the singular points $p_i$, $q_i$ intersect
$\Sigma$ transversely at well defined points $x_{p_i}$ and $x_{q_i}$. Clearly, the points $x_{p_i}$, $x_{q_i}$
are fixed by the automorphism of $\Sigma$ obtained as the local holonomy around $C_i$. This construction,
holds for every $i=1, \ldots, n+1$. Now, once by parallel transport, we represent all of these points in the same
fiber $\Sigma$, they must coincide (up to order) into two well defined set points which, incidentally,
are the fixed points of (a conjugate of) $\xi$. Indeed, all
the local holonomy maps around invariant fibers coincide (strictly speaking except for the fiber
$C_{n+1}$ which coincides with the
$n^{\rm th}$-power of the previous ones). In other words, given any two invariant fibers
$C_i$ and $C_j$, either we have $x_{p_i}=x_{p_j}$ and $x_{q_i}=x_{q_j}$ or we have
$x_{p_i}=x_{q_j}$ and $x_{q_i}=x_{p_j}$. However, we can assume without loss of generality that the
first possibility holds. Indeed, this happens thanks to the local models for the foliation $\widetilde{\fol}_n$
around the invariant fibers $C_i$ described in the proof of Proposition~\ref{Riccati_foliation-F1}: the position of
the singular points can be permuted without affecting the holonomy.
Thus, we can ensure that all the separatrices $S_{p_i}$, $i=1, \ldots, n+1$, glue together so as to define a
invariant compact curve $R_p$. Similarly, the separatrices $S_{q_i}$, $i=1, \ldots, n+1$, yield another
invariant compact curve denoted by $R_q$. Due to the transverse nature of the Riccati foliation with respect to
the fibers of $F_1$, it is immediate to conclude that both $R_p$ and $R_q$ are sections of $F_1$.
However, since the eigenvalues of the foliation $\widetilde{\fol}_d$ at the singular points $p_i$
(resp. $q_i$) are positive (resp. negative) real numbers, there follows from Camacho-Sad index theorem \cite{camacho}
that $R_p$ (resp. $R_q$) has positive (resp. negative) self-intersection. Finally, it is well known
that the surface $F_1$ possesses only two sections which, incidentally, are rational curves of
self-intersection~$1$ and~$-1$, see \cite{barth}. The lemma follows.
\end{proof}

Again let $\Sigma$ be a generic fiber of $F_1$. The generator $\xi$ of the global holonomy group $\Gamma$
of the foliation $\widetilde{\fol}_n$ being an elliptic element of infinite order, there exists a continuum
of circles contained in $\Sigma$ that are invariant under $\xi$. Fix one of these invariant circles
and denote it by $S^1$. Note that
the normalised Lebesgue measure on $S^1$ is invariant under $\xi$.
Furthermore, the points $x_{p}$ and $x_{q}$ where the section $R_p$ and $R_q$ intersect $\Sigma$
are the fixed points of $\xi$ and, therefore, lie away from $S^1$.

Let $\overline{\fol \, (S^1)}$ be the closure of the saturated set $\fol(S^1)$ of the invariant circle $S^1$ by the
foliation $\widetilde{\fol}_n$. We have first to show that $C_i \cap \overline{\fol \, (S^1)} = \{ p_i \}$. To do so,
let us begin with the following claim: $C_i \cap \overline{\fol \, (S^1)} \subset \{p_i,q_i\}$. To prove the claim
we argue by contradiction. Assume it is false. Then
$\fol(S^1)$ accumulates on a regular point of the invariant fiber $C_i$. Since the foliation $\widetilde{\fol}_n$
has a (linearisable) saddle singular point at $q_i$, the fact that $\fol(S^1)$ accumulates on a regular point of
the invariant fiber $C_i$ implies that $\fol(S^1)$ must accumulate at regular points of the separatrix $S_{q_i}$
as well. However, the separatrix $S_{q_i}$ intersects the generic fiber $\Sigma$ at a fixed point of $\xi$ that,
in turn, is accumulated by points in $\fol(S^1)$. However, this is
impossible since the intersection of $\fol(S^1)$ and $\Sigma$
is the initial invariant circle $S^1$ which lies away from the fixed points of $\xi$. The resulting contradiction
then establishes the claim.

Next, note that the above argument actually
shows that the separatrix $S_{q_i}$ cannot be contained in $\overline{\fol \, (S^1)}$. Thus, $\fol \, (S^1)$ cannot
accumulate on any of the two separatrices of the foliation $\widetilde{\fol}_n$ at the singular point $q_i$. Since
the singular point $q_i$ lies in the Siegel domain, this implies that $\fol \, (S^1)$ cannot accumulate on the singular
point~$q_i$ itself. Hence, the intersection $C_i \cap \overline{\fol \, (S^1)}$ is reduced to $\{ p_i \}$
since it cannot be empty. We have then proved that $C_i \cap \overline{\fol \, (S^1)} = \{ p_i \}$ as required.

For each $i \in \{1, \ldots ,n+1\}$, let $B_i \subset F_1$ be a small neighbourhood around the singular point $p_i$
along with coordinates $(z^i_1,z^i_2)$ where the restriction of the foliation $\widetilde{\fol}_n$ to $B_i$ is
linear, i.e., the foliation $\widetilde{\fol}_n$ is locally given by a vector field having the
form
$$
z^i_1 \frac{\partial}{\partial z^i_1}+ \lambda z^i_2 \frac{\partial}{\partial z^i_2},
$$
where $\lambda \in \R^+ \setminus \Q$. Similarly, for each $B_i$, $i=1, \ldots , d+1$, let $V_i \subset B_i$
be a (smaller) neighborhood of $p_i$ properly contained in $B_i$.
The foliation $\widetilde{\fol}_n$ being regular on the complement of $\bigcup V_i$, consider a covering
of $F_1 \setminus \bigcup V_i$ by foliated coordinates $\{(U_j, \varphi_j, \Sigma_j)\}_{j=1}^N$.
In fact, in what follows, it would be enough to consider the restriction
of $\widetilde{\fol}_n$ to $\overline{\fol \, (S^1)}$ as a (singular) lamination
by Riemann surfaces, see for example \cite{fornaesssibony-survey}.

Before continuing, recall that a positive foliated closed current is said to be {\it algebraic}\, if it
coincides with a constant multiple of the current given by integration over a (possibly singular)
algebraic invariant curve. Otherwise, the current is said to {\it difuse}. Similarly, given a (non-constant)
holomorphic map $f : \C \rightarrow \CP^2$, an Ahlfors current is a positive {\it closed} current
of dimension~$(1,1)$ which is the limit of a sequence
$$
\frac{f_{\ast}[D_{r_j}]}{{\rm Area}\, (f(D_{r_j}))}
$$
where $r_j \rightarrow \infty$. Here $[D_{r_j}]$ stands for the integration current
over the disc $D_{r_j} \subset \C$ of radius $r_j$ and the {\it area} 
${\rm Area}\, (f(D_{r_j}))$ is nothing but the area of $f(D_{r_j})$ computed with respect to
any auxiliary Riemannian metric. Alternatively, 
${\rm Area}\, (f(D_{r_j}))$ can be defined as the integral over $D_{r_j}$ of the pull-back
by $f$ of the standard Fubini-Study form on $\CP^2$.

In what follows, a $(1,1)$-positive foliated closed current $\widetilde{T}$ on $F_1$ that, in addition, is diffuse and
of Ahlfors type will be constructed. Basically the construction will be carried out two steps due to
the presence of singular points. To begin, consider
a $(1,1)$-differential form $\omega$ on $F_1$ and let it be decomposed as $\omega=\omega_1+\omega_2$
where $\omega_1$ and $\omega_2$ are $(1,1)$-differential
forms supported on $F_1 \setminus \bigcup V_i$ and $\bigcup B_i$, respectively.

First, we are going to define $\widetilde{T}$ for forms like $\omega_1$, i.e., forms whose supports are contained in
$F_1 \setminus \bigcup V_i$. This is very much the general construction in \cite{sullivan} since singular
points play no role. Indeed, to define $\widetilde{T}$ in this case we proceed as follows. Since the singular points
of the foliation $\widetilde{\fol}_n$ lie away from the support of $\omega_1$ fix a partition of the unity
$(a_j)_{j=1}^N$ strictly subordinate to the cover $(U_j)_{j=1}^N$ of $F_1 \setminus \bigcup V_i$. The
product $a_j \omega_1$ is identified with a continuous function on $\Sigma_j$. In fact, we can consider a
function $f_j:\Sigma_j \rightarrow \C$ defined by
$$
z_2 \mapsto f_j(z_2)=\int_{P_{z_2}}a_j \omega_1(z_2),
$$
in terms of foliated local coordinates $(z_1,z_2)$ in $U_j$, the plaques of the restriction of $\tilf_n$
to $U_j$ are of the form $P_{z_2} \subset \C \times \{z_2\}$ while the
transverse section $\Sigma_j$ becomes $\Sigma_j \subset \{z_1=0\}$. Then we integrate and sum these functions
obtaining the value
$$
\widetilde{T}(\omega_1)=\sum_{j=1}^{N}\left( \int_{\Sigma_j}f_j(z_2)d\mu_j(z_2)\right)
=\sum_{j=1}^{N} \int_{\Sigma_j} \left( \int_{P_{z_2}} a_j \omega_1(z_2) \right) d\mu_j(z_2) \, ,
$$
where $\{ \mu_j\}$ is the transverse invariant measure induced by the normalised Lebesgue measure on $S^1$.


Now we have to define $\widetilde{T}$ for forms like $\omega_2$, i.e., those whose supports are
contained in $\bigcup B_i$.
Without loss of generality, we can assume that the support of $\omega_2$ is contained in $B_1$.
Consider the restriction $\widetilde{\fol}_{\vert B_1}$
of $\widetilde{\fol}_n$ to $B_1$ and let $D_1$ be a transverse section
to this foliation. The invariant circle $S^1 \subset \Sigma$ induces a circle $S^1_1 \subset D_1$ which is invariant under
the holonomy of $\widetilde{\fol}_{\vert B_1}$. Next, define
$\Phi: \mathcal{S} \times S^1_1 \rightarrow B_1$ by
$$
\Phi(t,y)=(e^t,ye^{\lambda t}),
$$
where $\mathcal{S}= \{t\in \C: {\rm Re}(t) \leq 0, 0 \leq {\rm Im}(t) \leq 2\pi\}$. The leaves of the
foliation $\widetilde{\fol}_{\vert B_1}$ are then parametrised by $\Phi$. More precisely, for
$y \in S^1_1$ fixed, let $\Phi_{y}: \mathcal{S} \rightarrow B_1$ be defined by $\Phi_{y}(t)=\Phi(t,y)$.
Then set $L_{y}=\{\Phi_{y}(t): t \in \mathcal{S} \}$. Clearly each $L_y$ is contained in a leaf of 
$\widetilde{\fol}_{\vert B_1}$ and the union of these pieces of leaves over an orbit of the holonomy group
yields all of the leaf in question. Hence, to have $\widetilde{T}$ well defined, it suffices
to show that the (improper) double integral
\begin{equation}
\int_{S^1_1} \left( \int_{L_y} \omega_2 \right) d\mu_1(y) \label{doubleintegral_closedcurrents}
\end{equation}
converges. In turn, owing to Dominated convergence and to Tonelli theorem, this double integral converges provided that
the improper integral
$$
\int_{L_y} \omega_2 = \int_{\mathcal{S}} \Phi_y^*\omega_2
$$
is uniformly bounded (independent of $y \in S^1_1$). Let us then show that the latter integral is, indeed, uniformly
bounded on~$y$.

To do so, recall first that $B_1$ is equipped with coordinates $(z^1_1, z^1_2)$ where the foliation
$\widetilde{\fol}_n$ is given by the vector field $z^1_1 \partial /\partial z^1_1 + 
\lambda z^1_2 \partial /\partial z^1_2$ with $\lambda \in \R^+ \setminus \Q$ irrational.
In particular, $L_y$ is parametrised by $\Phi_{y}(t) = (e^t,ye^{\lambda t}) = (z^1_1,z^1_2)$, $t \in \mathcal{S}$.
(where $y \in S^1_1$).
Hence
$dz^1_1=e^tdt$, $dz^1_2=\lambda ye^{\lambda t}dt$, $d\overline{z}^1_1=e^{\overline{t}}d\overline{t}$,
and $d\overline{z}^1_2=\lambda \overline{y}e^{\lambda \overline{t}}d\overline{t}$. 
Letting
$$
\omega_2= \sum_{s,k=1}^{2}a_{sk}dz^1_s \wedge d\overline{z}^1_k,
$$
we conclude that
$$
\Phi_y^*\omega_2=\left(e^te^{\overline{t}}a_{11}+\lambda \overline{y}e^te^{\lambda \overline{t}}a_{12}+\lambda ye^{\overline{t}}e^{\lambda t}a_{21}+\lambda^2 \vert y \vert^2e^{\lambda t}e^{\lambda \overline{t}}a_{22}\right)dt
\wedge d\overline{t} = J(t,y) dt \wedge d\overline{t} \, .
$$
Where, in slightly more explicit terms, $J(t,y)$ is given by
$$
e^{2{\rm Re}(t)}a_{11} \! + \! \lambda \overline{y}e^{(\lambda +1){\rm Re}(t)+i(-\lambda +1){\rm Im}(t)}a_{12} \!+\!
\lambda ye^{(\lambda +1){\rm Re}(t)+i(\lambda -1){\rm Im}(t)}a_{21}\! + \!\lambda^2
\vert y \vert^2e^{2 \lambda {\rm Re}(t)}a_{22} .
$$
Since $\lambda$ is positive real, there follows that the integrals over $\mathcal{S}$ of each of the functions
$e^{2{\rm Re}(t)}$, $e^{(\lambda +1){\rm Re}(t)+i(-\lambda +1){\rm Im}(t)}$, 
$e^{(\lambda +1){\rm Re}(t)+i(\lambda -1){\rm Im}(t)}$, and $e^{2 \lambda {\rm Re}(t)}$ are all absolutely convergent.
Furthermore, $\vert y\vert$ is also uniformly bounded since $y \in S^1_1$ and all the coefficients
$a_{sk}$ are also bounded since they are identified with $C^{\infty}$ functions supported in $B_1$.
Thus the integral in~(\ref{doubleintegral_closedcurrents}) is convergent. Hence the coupling of
$\widetilde{T}$ and $\omega_2$ given by
$$
\widetilde{T}(\omega_2)=\sum_{j=1}^{d+1} \int_{S^1_j} \left( \int_{L_y} \omega_2 \right) d\mu_j(y) \, 
$$
is well defined on forms $\omega_2$ whose support is contained in the union of the sets $B_i$, $i=1,\ldots, d+1$.
Now, the formula
$$
\widetilde{T}(\omega)=\widetilde{T}(\omega_1+\omega_2)=\sum_{s=1}^{N} \int_{\Sigma_s}
\left( \int_{P_{z_2}} a_s \omega_1(z_2) \right) d\mu_s(z_2)+\sum_{k=1}^{d+1} \int_{S^1_k}
\left( \int_{L_y} \omega_2 \right) d\mu_k(y)
$$
yields a well defined current on $F_1$ since it clearly does not depend on the decomposition $\omega=\omega_1+\omega_2$.
Clearly $\widetilde{T}$ is positive, foliated, and closed.

\begin{proof}[Proof of Theorem~B]
In the preceding, we have constructed a $(1,1)$-diffuse positive foliated closed current $\widetilde{T}$
on the surface $F_1$. It only remains to check that the current $\widetilde{T}$ actually is of Ahlfors type. For this,
notice first any leaf $L$ of the foliation $\widetilde{\fol}_n$
contained in $\overline{\fol \, (S^1)}$ is dense in $\overline{\fol \, (S^1)}$ since $\gamma$ is an irrational rotation.
Consider then a leaf $L_{0}$ of the foliation $\widetilde{\fol}_n$ contained in $\overline{\fol \, (S^1)}$.
Note that $L_0$ goes through the singular points $p_i$, $i=1, \ldots, n+1$ with infinitely many branches. Still,
away from these singular points $p_1, \ldots , p_{n+1}$, the growth type of $L_0$ 
is determined by the growth of the global holonomy group $\Gamma$ of the
foliation $\widetilde{\fol}_d$. The group $\Gamma$ being cyclic, its growth is linear.
However, the combination of the previously considered parametrisation of
the leaves of $\widetilde{\fol}_n$ in the sets $B_i$ with the linear growth of the holonomy group
$\Gamma$ guarantees the existence of exhaustion $(D_{r_j})_{j \in \N}$ of the leaf $L_0$ such that
$$
\lim_{j \to +\infty} \frac{{\rm Length}(\partial D_{r_j})}{{\rm Area} (D_{r_j})}=0,
$$
where $\partial D_{r_j}$ stands for the boundary of $D_{r_j}$ and where length and area are computed with respect
to an auxiliary Hermitian metric fixed on $F_1$. In particular, as Riemann surface, the leaf $L_0$ is a
quotient of $\C$. Similarly, an Ahlfors current $T_A$ supported on $\overline{\fol \, (S^1)}$ can be obtained
as an accumulation point for the sequence of normalized integration currents associated with the exhaustion
$(D_{r_j})_{j \in \N}$. Since all currents in the mentioned sequence
are clearly foliated, so it will be its accumulation points.
Now, as a foliated
closed current, $T_A$ can be disintegrated on $\Sigma$ to yield a measure on $S^1$ that is invariant under $\xi$.
However, the action of $\xi$ on $S^1$ being conjugate to an irrational rotation, it admits a unique invariant
probability measure thanks to the well-known Weyl's theorem. Hence, up to a constant multiple, $T_A$ and
$\widetilde{T}$ induce the same measure on $S^1 \subset \Sigma$. Therefore, they themselves coincide up
to multiplication by a positive constant.

To complete the proof of Theorem~B we have to adapt the preceding construction conducted on $F_1$ to the
complex projective plane $\CP^2$.
Consider the collapsing of the~$(-1)$-rational curve in $F_1$ leading to the blow-down projection
$\pi: F_1 \rightarrow \CP^2$ and let $\fol_n$ be the resulting foliation on $\CP^2$.
By definition, $\fol_n$ is a Riccati foliation on $\CP^2$ and it has exactly $(n+1)$ invariant lines, all of
them being simple. Therefore $\fol_n$ is a degree~$n$ foliation on $\CP^2$, cf. Lemma~\ref{degree_formula}.
Naturally the push-forward of the current $\widetilde{T}$ by $\pi$ induces again a diffuse Ahlfors current $T$ for
$\fol_n$ on $\CP^2$. Finally, in terms of the singular points of $\fol_n$, recall from
Lemma~\ref{rational curves} that the $(-1)$-rational curve in $F_1$ passes through all ``saddle'' singular points
$q_i$ and avoid all ``sink'' singular points $p_i$, $i=1, \ldots, n+1$. Thus the following holds:
\begin{itemize}
	\item The $(n+1)$ simple (sink) singular points $p_i$ of the foliation $\widetilde{\fol}_n$ yield $(n+1)$ simple
	singular points for $\fol_n$.
	
	\item The $(n+1)$ simple (saddle) singular points $q_i$ of the foliation $\widetilde{\fol}_n$ sitting in the collapsed curve
	merge together in a (degenerate of order~$n$) singular point for $\fol_n$.
\end{itemize}
Away from these $n+2$ singular points, the foliation $\fol_n$ is regular as a diffeomorphic image of the regular
foliation induced by $\widetilde{\fol}_n$ on the corresponding Zariski-open set. Theorem~B is proved.
\end{proof}

Let us close this section with some examples showing that the problem of describing
diffuse positive foliated closed currents on $\CP^2$ goes beyond the description of currents that
are of Ahlfors type.

\begin{ex}\label{Hilbertfoliations}
Prototypical examples of holomorphic foliations admitting diffuse positive foliated closed currents
that are not of Ahlfors type are provided by {\it Hilbert modular foliations}.
In the sequel, we restrict ourselves to the case of complex surfaces. Let $N$ be a square-free positive integer
and denote by $K$ the totally real quadratic field $\Q (\sqrt{N})$. The ring of integers in $K$ will be denoted
by $O_K$ while $\Delta_K$ will stand for the discriminant of $K$.
The two natural embedding of $K$ in $\R$ induce an embedding of
${\rm PSL}\, (2,K)$ in ${\rm PSL}\, (2,\R) \times {\rm PSL}\, (2,\R)$.
Through this embedding, the {\it Hilbert
modular group}\, ${\rm PSL}\, (2,O_K)$ acts on two copies $\Hup \times \Hup$ of the upper half plane $\Hup$.
The quotient of this action can be compactified by adding finitely many cusps to give rise to a normal singular
compact complex space of dimension~$2$. The singularities of this space, however, are of orbifold-type and arise
from elliptic elements in ${\rm PSL}\, (2,O_K)$. Once they are resolved in a canonical minimal way, we obtain
an algebraic surface $Y(\Delta_K)$ called the {\it Hilbert modular surface of $K$}, cf. \cite{hilbert1}

Next note that the two evident foliations of $\Hup \times \Hup$ by vertical and by horizontal upper planes are both
preserved by the action of ${\rm PSL}\, (2,O_K)$. Thus the Hilbert surface $Y(\Delta_K)$ is endowed with a pair
of singular foliations, mutually transverse at a Zariski-open set, which are called {\it Hilbert modular foliations}
and are studied, in particular, in \cite{lgmendes}.
These foliations will be denoted by $\mathcal{H}$.
Zariski-dense leaves of $\mathcal{H}$, i.e., non algebraic leaves, are of hyperbolic type since they are quotients
of the upper half plane $\Hup$. In particular, $\mathcal{H}$ carries no foliated current of Ahlfors type. Yet, we have:

\vspace{0.1cm}

\noindent {\it Claim}. Every Hilbert modular foliation $\mathcal{H}$ admits a diffuse positive foliated
closed current $T$.

\begin{proof}
	Note that the transverse space to $\mathcal{H}$ on $(\Hup \times \Hup)/{\rm PSL}\, (2,O_K)$
	can naturally be identified with the quotient of $\Hup$ by the action of ${\rm PSL}\, (2,O_K) \subset
	{\rm PSL}\, (2,\R)$. From this perspective, the subgroup ${\rm PSL}\, (2,O_K)$ is actually discrete and
	the corresponding action admits a fundamental domain having finite hyperbolic volume, cf. \cite{hilbert1}.
	In turn, the hyperbolic measure is invariant under the holonomy since it locally coincides with M\"oebius
	transformation (cf. the assertion that $\mathcal{H}$ has a transverse projective structure in Theorem~1 of
	\cite{lgmendes}). In other words, the foliation $\mathcal{H}$ on the open surface
	$(\Hup \times \Hup)/{\rm PSL}\, (2,O_K)$ possesses a natural transversely invariant measure in the sense
	of Definition~\ref{Plante_definition}. We can then use this transversely invariant measure to produce
	a foliated closed current for $\mathcal{H}$ on the corresponding algebraic (compact) Hilbert surface
	$Y(\Delta_K)$ following Sullivan general theory. The fact that Sullivan's construction actually yields a well defined current
	on $Y(\Delta_K)$ can be checked by means of a straightforward adaptation of the preceding dicussion,
	keeping in mind the structure of the compactification in \cite{hilbert1}. Details are left to the reader.
\end{proof}

The above constructed current is clearly diffuse since the hyperbolic measure on $\Hup$ has no atomic part.
Finally, to produce examples defined on $\CP^2$, we note that rational Hilbert surfaces $Y(\Delta_K)$
were classified in \cite{hilbert2}. They correspond to the cases where $\Delta_K$ takes on one of the values
$5$, $8$, $12$, $13$, $17$, $21$, $24$, $28$, $33$, $60$. Therefore these surfaces are birationally equivalent
to $\CP^2$. Once again it is straightforward to follow the birational maps in question to make sure that
the initial diffuse positive foliated closed current is pushed-forward to a proper diffuse positive
foliated closed current for the corresponding foliation on $\CP^2$.
\end{ex}

\section{Riccati equations and singular Levi flats}\label{provingTheoremA_Part_I}

In this section we begin a direct approach to the proof of Theorem~A. In what follows we will provide
a general sufficient criterion for a Riccati foliation on $\CP^2$ to exhibit (singular) Levi-flats, whether
they are real analytic or only continuous with a fractal nature.
The content of the section is summarised by Theorem~\ref{a&b} which establishes some parts of Theorem~A.

Consider a Riccati foliation $\fol$ on the Hirzebruch surface $F_1$ along with a {\it simple invariant fiber $C$}. Recall
that the foliation $\fol$ has either one or two singular points in the invariant fiber $C$,
cf. Equation~(\ref{simplenormalform_Riccati_Fn}). Furthermore,
the following can easily be checked: 
\begin{itemize}
\item If the foliation $\fol$ has two singular points in the invariant fiber $C$, then at each singular point,
the eigenvalue of the foliation $\fol$ associated with the direction tangent to the invariant fiber $C$ is necessarily
different from zero. The invariant fiber $C$ being, in addition, simple, each of these singular points will have
two eigenvalues different from zero. A singular point possessing two eigenvalues $\lambda_1$ and $\lambda_2$
different from zero is said to belong to the {\it Poincar\'e domain} if the quotient $\lambda_1/\lambda_2$
lies in $\C \setminus \R_-$. If the quotient $\lambda_1/\lambda_2$ lies in $\R_-$, then the singularity is said to
belong to the {\it Siegel domain}. Finally, singularities with two non-zero eigenvalues $\lambda_1, \, \lambda_2$
such that $\lambda_1/\lambda_2 \in \C \setminus \R$ are called hyperbolic. In particular, hyperbolic singularities
belong to the Poincar\'e domain.

\item If the foliation $\fol$ has a unique singular point in the invariant fiber $C$, then the eigenvalue of
the foliation $\fol$ corresponding to the direction of the invariant fiber $C$ is equal to zero. However, the
invariant fiber $C$ being simple, the foliation $\fol$ has a non-zero eingenvalue in the direction transverse
to the invariant fiber $C$. This type of singularity is called a {\it saddle-node}. In other words,
saddle-node singularities are those that have one eigenvalue equal to zero and another different from zero.
\end{itemize}

\begin{theorem} \label{a&b}
Let $\fol$ be a Riccati foliation on $\CP^2$ as in Theorem~A. In particular, the local holonomy maps of the
foliation $\fol$ around the invariant lines are all elliptic elements. Assume that the global holonomy group
$\Gamma$ of the foliation $\fol$ is a Fuchsian (resp. quasifuchsian) group of first kind. Then the following holds:
\begin{itemize}
	\item[(a)] There exists a closed set $\mathcal{L} \subset \CP^2$ of topological dimension equal to~$3$
	which is invariant by the foliation $\fol$.
	
	\item[(b)] If $\Gamma$ is Fuchsian, then $\mathcal{L}$ is a real analytic set with $k$ singular points, all of
	them of orbifold-type. If $\Gamma$ is quasifuchsian, then $\mathcal{L}$ is a ``singular topological manifold''
	with Hausdorff dimension strictly greater than~$3$.
\end{itemize}
\end{theorem}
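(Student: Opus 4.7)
The plan is to work on the Hirzebruch surface $F_1$ first, where the suspension structure is transparent, and only at the end push $\mathcal{L}$ down to $\CP^2$ using the birational map from Definition~\ref{Riccatifoliations_onCP2}. On the open surface $F_1^\circ := F_1 \setminus (C_1 \cup \cdots \cup C_k)$, the foliation $\fol$ is a genuine suspension of its global holonomy $\Gamma$. Fix a regular fiber $\Sigma_0 \simeq \CP^1$ with the induced $\Gamma$-action. Since $\Gamma$ is Fuchsian (resp. quasifuchsian) of first kind, its limit set is a topological circle $\mathcal{J} \subset \Sigma_0$ coinciding, in the Fuchsian case, with a round circle, and in the quasifuchsian case with a genuine (non-smooth) Jordan curve whose Hausdorff dimension is strictly greater than~$1$ by Bowen's theorem~\cite{bowen}. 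Let $\mathcal{L}^\circ \subset F_1^\circ$ be the saturation of $\mathcal{J}$ under $\fol|_{F_1^\circ}$: because $\mathcal{J}$ is $\Gamma$-invariant and $\Gamma$ controls the transverse dynamics, $\mathcal{L}^\circ \cap \Sigma = \mathcal{J}$ (up to parallel transport) in every regular fiber. Minimality of $\Gamma$ on $\mathcal{J}$ (standard for first-kind Kleinian groups) gives density of every leaf of $\fol$ contained in $\mathcal{L}^\circ$, so the only candidate for the minimal set of~(a) is the closure $\mathcal{L} := \overline{\mathcal{L}^\circ}$ in $F_1$.

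Next I would analyze the closure near each invariant fiber $C_i$ by using the local linear models produced in the proof of Proposition~\ref{Riccati_foliation-F1}. Because the local holonomy $M_i$ is elliptic, coordinates $(u,v)$ can be chosen in which $\fol$ is given by $u\,\partial_u + a\,v\,\partial_v$ with $a \in \R \setminus \Q$, so that leaves have the form $v = v_0(u/u_0)^{a}$. A short computation shows that the family of leaves issued from $v_0 \in \mathcal{J}$ accumulates only on the fixed points of $M_i$, which are the two singular points of $\fol$ on $C_i$ (or the single saddle-node fixed point in the degenerate situations). Thus $\mathcal{L} \setminus \mathcal{L}^\circ$ consists only of the singular points of $\fol$ sitting in the union of the invariant fibers, and these become the orbifold-type singularities of $\mathcal{L}$: near each such point the local model $u\,\partial_u + a\,v\,\partial_v$ gives an explicit orbifold quotient description. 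Since $\mathcal{L}^\circ$ is locally homeomorphic to a product of a disc and a circle (the invariant $\mathcal{J}$), its topological dimension is exactly~$3$; the closure adds only finitely many points, which does not change the topological dimension. This yields~(a).

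For~(b), I would separate the two cases. When $\Gamma$ is Fuchsian, the Riccati foliation carries a transverse $\psl$-structure, and the limit set $\mathcal{J}$ is a real analytic submanifold of the transversal (it is the fixed real circle of an anti-holomorphic involution commuting with $\Gamma$). Saturating a real-analytic, $\psl$-invariant local equation $\varphi(v,\bar v)=0$ through the flat transverse structure produces a real-analytic local defining function for $\mathcal{L}^\circ$, so that $\mathcal{L}^\circ$ is a real-analytic hypersurface of $F_1^\circ$. Extending real-analyticity across each singular point is precisely the content of Lemma~\ref{addedlemmaanalytic} that the authors defer, and I would state and use it here (it amounts to checking that the orbifold quotient given by the local model above does carry a genuine real-analytic structure compatible with the one on $\mathcal{L}^\circ$). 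In the quasifuchsian case there is no real-analytic invariant, but the transverse structure remains conformal, and the product structure $\mathcal{L}^\circ \simeq \mathrm{leaf} \times \mathcal{J}$ locally lets one transport Bowen's lower bound on $\dim_H(\mathcal{J})$ into a lower bound $\dim_H(\mathcal{L}^\circ) > 3$. The topological manifold structure away from singularities then comes from the fact that $\mathcal{J}$ is a Jordan curve.

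Finally, I would transfer the result from $F_1$ to $\CP^2$ via the blow-down $\pi\colon F_1 \to \CP^2$, verifying that $\pi$ is a real-analytic diffeomorphism away from the exceptional $(-1)$-curve and controlling what happens at its image. Here one must check that $\mathcal{L}$ does not contain the $(-1)$-curve in positive-dimensional intersection (so that $\pi(\mathcal{L})$ has the same topological dimension and, in the Fuchsian case, inherits real analyticity off the collapsed point, with an additional orbifold-type singular point appearing from the collapse accounted for in the count of $k$ singular points). I expect the main technical obstacle to be precisely Lemma~\ref{addedlemmaanalytic}: describing the germ of $\mathcal{L}$ at a singular point explicitly enough to check that the transverse real-analytic structure extends across the fixed point of the elliptic holonomy, rather than merely its continuous or Hölder counterpart. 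Everything else reduces to standard suspension arguments together with the limit-set dichotomy for Kleinian groups of first kind.
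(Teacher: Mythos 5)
Your overall strategy (saturate the limit set on $F_1$, analyze the closure along each invariant fiber via local models, then blow down) is the paper's strategy, but three of your concrete claims are wrong in ways that matter. First, the closure of the saturation does \emph{not} accumulate on both singular points of $\fol$ in $C_i$, and the local model is not $u\,\partial_u + a\,v\,\partial_v$ with $a$ irrational. By the Camacho--Sad index formula \cite{camacho} the two singular points on $C_i$ have opposite real eigenvalue ratios, so one ($p_i$) lies in the Poincar\'e domain and the other ($q_i$) in the Siegel domain; moreover, since the local holonomy is an elliptic element of a \emph{discrete} group it has finite order, forcing the ratio at $p_i$ to be a positive rational $n_i/m_i$ (Lemmas~\ref{Twosingularpoints} and~\ref{Limitsetaccumulation}). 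An irrational $a$ is incompatible with the hypotheses and would destroy the meromorphic first integral $u^{n_i}v^{-m_i}$ on which the orbifold description and Lemma~\ref{addedlemmaanalytic} rest. The closure meets $C_i$ \emph{only} at $p_i$: accumulation at $q_i$ or at regular points of $C_i$ would force accumulation on the separatrix $S_{q_i}$, hence would place a fixed point of an elliptic element inside the limit set, which is impossible for a Fuchsian or quasifuchsian group. Your count of ``two singular points per fiber'' would give $2k$ orbifold points instead of $k$, and at the Siegel point the set is simply absent rather than orbifold-singular. (Also, the saddle-node ``degenerate situation'' you hedge against never occurs here: elliptic holonomy rules it out, cf.\ Lemma~\ref{no_saddlenodes}.)

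Second, the passage to $\CP^2$ is not the routine verification you describe, and your prediction of ``an additional orbifold-type singular point appearing from the collapse'' is false. The paper's Lemma~\ref{RationalCurve} shows that $\overline{\fol\,(\Lambda)}$ is \emph{disjoint} from the $(-1)$-curve $\mathcal{C}$, so the blow-down is a holomorphic diffeomorphism on a neighborhood of $\mathcal{L}(\fol)$ and contributes no new singular point. This disjointness is a genuine lemma: one removes from $\mathcal{C}$ its intersections with the invariant fibers to obtain a leaf $\mathcal{C}_0$, builds an equivariant developing map from the covering of $\mathcal{C}_0$ associated with $\ker\rho$ into a generic fiber $\Sigma$, and shows its (open, $\Gamma$-invariant) image has Hausdorff quotient, hence lies in the discontinuity domain and misses the limit set. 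Without this step you cannot control $\pi(\mathcal{L})$, and with your version of it you would get the wrong singular locus. Finally, in the Fuchsian case the real-analyticity of $\mathcal{L}$ at $p_i$ (Lemma~\ref{addedlemmaanalytic}) is proved by constructing, after a resolution of the $n_i{:}m_i$ singularity, a holomorphic first integral $F$ with $\mathcal{L}\cap U = F^{-1}(\R\cup\{\infty\})$; you correctly identify this as the technical core but your setup (irrational eigenvalue ratio, accumulation at both singular points) would not support it.
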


\begin{lemma}\label{no_saddlenodes}
Let $\fol$ be a Riccati foliation on the surface $F_1$ along with a simple invariant fiber $C$ such that the local holonomy
map of $\fol$ around the invariant fiber $C$ is not parabolic. Then the foliation $\fol$ has two singular points in the invariant
fiber $C$ and each of these singular points is associated with two eigenvalues different from zero.
\end{lemma}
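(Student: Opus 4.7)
The plan is to argue by contrapositive: I will show that if $C$ contains a single singular point --- necessarily a saddle-node, by the classification just recalled --- then the local holonomy around $C$ is parabolic. The claim that both eigenvalues are non-zero in the remaining case will fall out along the way.

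Place $C=\{x=0\}$ in the normal form~(\ref{simplenormalform_Riccati_Fn}) with $F'(0)\neq 0$ by simplicity. The singular points on $C$ are then the roots in $\CP^1$ of the quadratic $c_0(0)+c_1(0)y+c_2(0)y^2$, so either (A)~two distinct roots, producing two singular points, or (B)~a single double root, producing one. In case~(A) the transverse eigenvalue is $F'(0)\neq 0$, and a direct Jacobian computation shows that the tangent eigenvalue at each root equals $\pm c_2(0)(y_1-y_2)$ in the affine chart (respectively $\pm c_1(0)$ in the chart $\overline{y}=1/y$, when one of the roots is at infinity), which is non-zero precisely because the roots are distinct. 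Moreover, since both eigenvalues are non-zero at each such singularity, each of them admits a smooth analytic separatrix transverse to $C$ (the strong invariant manifold), and these two transverse separatrices intersect any nearby fiber $\Sigma$ in two distinct points fixed by the local holonomy $h\in\psl$; a Möbius transformation with two fixed points cannot be parabolic.

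In case~(B) the tangent eigenvalue vanishes, and we face a saddle-node with $C$ as its weak separatrix. The idea is to compute $h$ directly in the local model: after reducing the leading part of~(\ref{simplenormalform_Riccati_Fn}) to $x\partial_x + c_2(0)y^2\partial_y$ (or to its analogue at infinity when the double root sits there), lifting the loop $x(t)=\epsilon e^{2\pi it}$ through the foliation produces the ODE $\dot y = 2\pi i\, c_2(0)\,y^2$, whose period-one flow yields the first-return map $y\mapsto y/(1-2\pi i\, c_2(0)\,y)$. This is manifestly a non-trivial parabolic element of $\psl$, fixing the intersection of the strong separatrix $\{y=0\}$ with the transversal.

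The main obstacle is ensuring that the higher-order terms ignored in passing to the model do not corrupt this conclusion for a general Riccati saddle-node. The clean route is to combine the global Riccati structure with the classical local theory: on the one hand, the foliation being globally Riccati forces $h$ to lie in $\psl$ a~priori; on the other, the classical theorem of Dulac on the weak-separatrix holonomy of a saddle-node guarantees that $h$ is tangent to the identity and has non-trivial higher jet. A non-identity element of $\psl$ whose derivative at a fixed point equals~$1$ is automatically parabolic, which settles case~(B) and so completes the contrapositive, yielding the lemma.
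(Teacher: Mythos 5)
Your proof is correct and follows essentially the same route as the paper's: reduce to the saddle-node case, use Dulac's normal form to produce the smooth strong separatrix transverse to $C$, observe that it yields a fixed point of the holonomy $h\in\psl$ with multiplier $1$ while $h$ is not the identity, and conclude that $h$ is parabolic (the paper phrases this as a contradiction rather than a contrapositive, and leaves your explicit Jacobian and first-return computations as "easily checked"). Two small remarks. First, the sentence in case~(A) claiming that a singular point with two non-zero eigenvalues automatically admits a separatrix transverse to $C$ is not correct in general: a resonant singularity in Poincar\'e--Dulac normal form has a \emph{unique} separatrix, which may well be $C$ itself, so no transverse separatrix need exist there (this is precisely the configuration that the paper's Lemma~\ref{Twosingularpoints} must exclude later, using the additional hypothesis that the holonomy is elliptic). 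Fortunately that sentence proves a converse the lemma does not assert, so deleting it costs nothing. Second, the germ you compute and need is the holonomy of the \emph{strong} separatrix (the branch transverse to $C$), obtained by lifting the loop in the base; labelling it the weak-separatrix holonomy invites confusion with the genuinely different Martinet--Ramis germ whose multiplier is $e^{2\pi i\lambda}$ rather than $1$.
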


\begin{proof}
Given the review at the beginning of the section about the structure of a Riccati foliation near a
simple invariant fiber, the proof amounts to checking that $\fol$ cannot have a unique singular point $p$
in the fiber $C$. Assume aiming at a contradiction that this was the case. Thus $p$ is a saddle-node singular
point for $\fol$ with a non-zero eigenvalue associated to a direction transverse to $C$. Owing to Dulac's
normal form for saddle-nodes, $\fol$ admits a separatrix $S$ at $p$ that is smooth, transverse to $C$, and tangent
to the non-zero eigenvalue of $\fol$ at~$p$ (see for example \cite{IlyashenkoYakovenko} and \cite{HelenaandI}).
The separatrix $S$ induces a fixed point for the holonomy map of $\fol$ arising from a loop around $C$. Moreover,
an elementary computation with Dulac's normal form (\cite{IlyashenkoYakovenko}, \cite{HelenaandI})
ensures the following holds:
\begin{itemize}
	\item The multiplier of the holonomy map at this fixed point equals~$1$.
	
	\item This holonomy map cannot coincide with the identity.
\end{itemize}
It follows from the preceding that the holonomy map of $\fol$ arising from winding around $C$ is a parabolic
map. The resulting contradiction proves the lemma.
\end{proof}

At this point it is convenient to remind the reader of 
some basic normal form theory for singularities of foliations
in dimension~$2$. The material mentioned below is available from most standard texts such
as \cite{IlyashenkoYakovenko}, \cite{HelenaandI}.

First consider a foliation $\fol$ defined on a neighbourhood of $(0,0) \in \C^2$ and having at
the origin two non-zero eigenvalues $\lambda_1$ and $\lambda_2$ satisfying $\lambda_1/\lambda_2 \in \R_-$.
Then there are local coordinates $(u,v)$ where the foliation $\fol$ is locally given by a
vector field having the form 
\begin{equation}
\lambda_1u[1+({\rm h.o.t})]\frac{\partial}{\partial u}+\lambda_2v[1+({\rm h.o.t})]\frac{\partial}{\partial v}. \label{Siegel}
\end{equation}   
where ${\rm h.o.t}$ stands for ``higher order terms''. In particular, it is immediate to check that the foliation
$\fol$ possesses {\it exactly}
two separatrices which, incidentally, are smooth and mutually transverse being given
in the above coordinates by the axes $\{u=0\}$ and $\{v=0\}$.

Next, let $\fol_1$ and $\fol_2$ be two holomorphic foliations as above and sharing the same (non-zero) eigenvalues
$\lambda_1$ and $\lambda_2$ ($\lambda_1/\lambda_2 \in \R_-$). Let $(u_i,v_i)$, $i=1,2$, be local coordinates
where $\fol_i$ takes on the form indicated in Equation~(\ref{Siegel}). Denote by $h_i$ the local
holonomy map of the foliation $\fol_i$ relative to the axis $\{v_i=0\}$, $i=1,2$. A theorem due
to Mattei-Moussu \cite{MatteiMoussu} then states that the foliations $\fol_1$ and $\fol_2$ are analytically
equivalent if and only if the holonomy maps $h_1$ and $h_2$ are analytically conjugate in ${\rm Diff}\, (\C,0)$.

Let us now move to the case where the two eigenvalues $\lambda_1$, $\lambda_2$ of $\fol$ at the origin
verify $\lambda_1/\lambda_2 \in \R_+$ (always with $\lambda_1 \lambda_2 \neq 0$).
First of all, if $\lambda_1$ and $\lambda_2$ {\it are non-resonant}, i.e., if neither $\lambda_1/\lambda_2$ nor
$\lambda_2/\lambda_1$ is a positive integer, then the foliation $\fol$ is {\it linearisable and diagonalisable}.
On the other hand, for the resonant case $n=\lambda_1/\lambda_2 \in \N$ there are two possible outcomes:
\begin{itemize}
\item For $n \geq2$. The foliation $\fol$ is either linearisable (and diagonalisable), i.e., there are local
coordinates $(x,y)$ where $\fol$ is represented by the vector field
$$
x\frac{\partial}{\partial x}+ny\frac{\partial}{\partial y},
$$
or it is conjugate to the so-called {\it Poincar\'e-Dulac normal form}. In the latter case, in suitable local coordinates
$\fol$ is represented by the vector field
$$
x\frac{\partial}{\partial x}+(ny+x^n)\frac{\partial}{\partial y}.
$$
In particular, $\fol$ possesses a unique separatrix which is given in Poincar\'e-Dulac coordinates by
$\{x=0\}$ (in particular, this separatrix is necessarily smooth).

\item Assume now that $n=1$. In this case, the foliation $\fol$ is always linearisable but there are still two possibilites.
Namely, $\fol$ can be disgonalisable, i.e., conjugate to the foliation represented by the vector field
$$
x\frac{\partial}{\partial x}+y\frac{\partial}{\partial y} \, ,
$$
or non-diagonalisable. In the latter case, there are Poincar\'e-Dulac coordinates where $\fol$
is represented by the vector field
$$
x\frac{\partial}{\partial x}+(y+x)\frac{\partial}{\partial y} \, .
$$
Once again, when admitting the Poincar\'e-Dulac normal form, $\fol$ possesses a unique separatrix $\{x=0\}$
which happens to be smooth.
\end{itemize}

\begin{remark}\label{holonomy_separatrix-poincaredulac}
To complement the information on foliations admitting the Poincar\'e-Dulac normal form, we note that the local
holonomy map $h$ arising from the (unique) separatrix is such that its $n^{\rm th}$-iterate $h^n$ is tangent to
the identity while never {\it equal to the identity} as it can directly be checked.
\end{remark}

Armed with the above material on normal forms, we can go back to our approach to Theorem~\ref{a&b}.
Recall that remarks at the beginning of the section and Lemma~(\ref{no_saddlenodes}) imply that a Riccati
foliation exhibiting a hyperbolic or elliptic holonomy map around a simple fiber $C$ must have exactly two singularities
in $C$. Moreover, each of these singularities admit two eigenvalues different from zero.

\begin{lemma}
\label{Twosingularpoints}
Consider a Riccati foliation $\fol$ on the Hirzebruch surface $F_1$. Let $C$ be a simple invariant fiber and assume
that the local holonomy map of $\fol$ around $C$ is elliptic.
Denote by $p,q \in C$ the two singular points of $\fol$ in $C$ and let $\lambda_1^p,\lambda_2^p$ and $\lambda_1^q,\lambda_2^q$
be the corresponding (non-zero) eigenvalues (where $\lambda_1^p,\lambda_1^q$ are associated with the direction of
the invariant fiber $C$). Then these eigenvalues satisfy the relation
\begin{equation}
\frac{\lambda_2^p}{\lambda_1^p} + \frac{\lambda_2^q}{\lambda_1^q} = 0 \label{indexformula}
\end{equation}
and both quotients $\lambda_2^p/\lambda_1^p$, $\lambda_2^q/\lambda_1^q$ lie in $\R$.
Furthermore, the foliation $\fol$ is linearisable around both $p$ and $q$. Finally, at both singular points,
$\fol$ possesses a smooth separatrix transverse to the invariant fiber $C$.
\end{lemma}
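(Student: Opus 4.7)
The plan is to combine the Camacho-Sad index theorem with a careful analysis of the local Riccati holonomy around $C$ near each singular point, relying on the classification of singularities in the Poincar\'e and Siegel domains recalled just above. Since $C$ is a fiber of the $\CP^1$-bundle $F_1 \to \CP^1$, its self-intersection is $C \cdot C = 0$, and the Camacho-Sad index theorem applied to $C$ immediately yields
$$
\frac{\lambda_2^p}{\lambda_1^p} + \frac{\lambda_2^q}{\lambda_1^q} \; = \; C \cdot C \; = \; 0,
$$
which is the identity~(\ref{indexformula}).

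To simultaneously secure the existence of the transverse separatrices at $p$, $q$ and to prepare the subsequent holonomy computation, I would first rule out Poincar\'e-Dulac singularities. Assume, say, that $p$ is of Poincar\'e-Dulac type; its unique separatrix must then be $C$ itself, and in local coordinates realising the normal form $u\frac{\partial}{\partial u} + (nv + u^n)\frac{\partial}{\partial v}$ with $C = \{u = 0\}$, the leaves integrate to $v = u^n(\log u + \textrm{const})$. Parametrising a small loop around $C$ by $u(t) = u_1 e^{2\pi i t}$ and reading the endpoint on the transverse fiber $\Sigma = \{u = u_1\}$ shows that the local Riccati holonomy is the translation $v \mapsto v + 2\pi i u_1^n$, hence a parabolic element of $\psl$ incompatible with the elliptic hypothesis. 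The subcase $n=1$ in diagonal form similarly produces the identity Riccati holonomy, also ruled out since elliptic elements of $\psl$ are non-trivial by the paper's terminology. Consequently each of $p$ and $q$ possesses a smooth separatrix transverse to $C$, which proves the final assertion of the lemma.

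With the transverse separatrix $S = \{v = 0\}$ at $p$ now available, I would write a local representative $X = u\, a(u,v)\frac{\partial}{\partial u} + v\, b(u,v)\frac{\partial}{\partial v}$ with $a(0,0) = \lambda_2^p$ and $b(0,0) = \lambda_1^p$. Linearising the leaf equation to first order in $v$ and evaluating the resulting residue produce the multiplier $e^{2\pi i \lambda_1^p/\lambda_2^p}$ of the Riccati holonomy at the fixed point of $\Sigma$ coming from $S$; ellipticity forces this multiplier to lie on the unit circle, so $\lambda_2^p/\lambda_1^p \in \R$, the analogous relation at $q$ being a consequence of~(\ref{indexformula}). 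Note that this step simultaneously excludes the hyperbolic regime $\lambda_2^p/\lambda_1^p \in \C \setminus \R$, which would yield a loxodromic Riccati holonomy.

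It remains to establish analytic linearisability. In the Poincar\'e domain $\lambda_2^p/\lambda_1^p \in \R_+$ the singularity is always linearisable: either trivially (non-resonant case), or by the previous exclusion of Poincar\'e-Dulac (resonant case $\lambda_1^p/\lambda_2^p \geq 2$), or because the equal-eigenvalue resonance cannot occur under the elliptic assumption. The genuinely delicate case is Siegel, $\lambda_2^p/\lambda_1^p \in \R_-$, and this is where I plan to invoke the Mattei-Moussu theorem. The Riccati holonomy being a M\"obius transformation of $\Sigma$, its germ at the fixed point arising from $S$ is analytically conjugate to the rotation $v \mapsto v\, e^{2\pi i \lambda_1^p/\lambda_2^p}$; on the other hand, lifting the loop $u(t) = u_1 e^{2\pi i t}$ on the same transverse section identifies this germ with the separatrix holonomy of $S$ at $p$. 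Thus the separatrix holonomy of $S$ is analytically conjugate to the corresponding holonomy in the linear Siegel model with the same eigenvalues, and Mattei-Moussu's theorem then delivers the analytic equivalence of $\fol$ at $p$ with its linear form. The main obstacle of the argument is precisely this Siegel subcase, since it is the only one requiring the nontrivial Mattei-Moussu input; the argument at $q$ is identical.
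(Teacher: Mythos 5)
Your overall skeleton is the same as the paper's: Camacho--Sad for~(\ref{indexformula}), multiplier analysis at the fixed points of the elliptic monodromy to exclude hyperbolic points and force real ratios, and Mattei--Moussu at the Siegel point. Your way of getting linearisability at $q$ (matching the holonomy of the transverse separatrix $S_q$ with the germ of the elliptic M\"obius map at its fixed point, then invoking Mattei--Moussu) is a clean variant of the paper's, which instead transports the holonomy of the $C$-separatrix from $p$ to $q$ along the cylinder $C\setminus\{p,q\}$. The problem is your exclusion of Poincar\'e--Dulac singularities, on which the resonant case of everything else hinges. You compute the monodromy in Poincar\'e--Dulac normal-form coordinates $(u,v)$ and conclude that, being the translation $v\mapsto v+2\pi i u_1^n$, it is parabolic in $\psl$. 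That inference is not valid: the normalising coordinates are an arbitrary local biholomorphism, so $\{u=u_1\}$ need not be a fibre of $F_1$ and, more importantly, $v$ need not be the restriction of a projective coordinate on the fibre. Away from its fixed points \emph{every} M\"obius transformation, elliptic ones included, can be written as a translation in a suitable local holomorphic coordinate (conjugate $z\mapsto e^{i\theta}z$ by a branch of $\frac{c}{i\theta}\log z$ on a disc avoiding $0$ and $\infty$). So ``looks like a fixed-point-free translation in some chart'' carries no information about the conjugacy class of the global monodromy. Making your computation legitimate would require a \emph{fibered} normal form for the Riccati equation at a resonant singular point, which is true but is itself a statement needing proof and is essentially equivalent to what you are trying to establish.

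The step can be repaired with tools you already use, by looking at the other singular point. If $p$ is of Poincar\'e--Dulac type its unique separatrix is $C$, tangent to the eigenvalue-$n$ direction, so $\lambda_1^p/\lambda_2^p=n$ and, by~(\ref{indexformula}), $\lambda_1^q/\lambda_2^q=-n\in\Z$. The Siegel point $q$ does possess a separatrix $S_q$ transverse to $C$, hence a genuine fixed point of the monodromy on $\Sigma$, and your own multiplier computation there gives $e^{2\pi i\lambda_1^q/\lambda_2^q}=e^{-2\pi i n}=1$ --- impossible for an elliptic element of $\psl$, whose two fixed points carry multipliers $e^{\pm i\theta}\neq 1$. (The paper's version is longer: it deduces that the monodromy has finite order, applies Mattei--Moussu at $q$ to get trivial holonomy along $C$ near $q$, transports this to $p$ along the cylinder $C\setminus\{p,q\}$, and contradicts Remark~\ref{holonomy_separatrix-poincaredulac}.) The same caveat applies to your one-line dismissal of the equal-eigenvalue resonance, which again should be excluded via the multiplier at the $S_q$ fixed point rather than by a local ``translation'' computation. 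Once this step is fixed, the rest of your plan goes through.
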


\begin{proof}
Equation~(\ref{indexformula}) is a direct application of the Camacho-Sad index formula in \cite{camacho}.
Let us then begin by showing that the singular points $p$ and $q$ are not hyperbolic, i.e., that the
quotient of the eigenvalues is a real number. For this, assume for a contradiction
that the singular point $p$ is hyperbolic. Owing to the index formula~(\ref{indexformula}),
there follows that so is $q$. It follows from Poincar\'e theorem that $\fol$ is linearisable at both~$p$ and~$q$.
In particular, $\fol$ admits a smooth separatrix $S_p$ transverse to $C$ at $p$. Again $S_p$
corresponds to a fixed point of the holonomy map of $\fol$ around $C$. However, $p$ being hyperbolic,
there follows from an elementary calculation that this fixed point is hyperbolic, i.e., the absolute value
of its multiplier is different from~$1$. This is impossible since this holonomy map is elliptic.
The resulting contradiction then shows that the quotients $\lambda_2^p/\lambda_1^p$ and
$\lambda_2^q/\lambda_1^q$ lie in $\R$ and are different from zero.

From Formula~(\ref{indexformula}), we can assume without loss of generality that
$\lambda_2^p/\lambda_1^p >0$ and that $\lambda_2^q/\lambda_1^q <0$. Thus
the singular point $p$ belongs to the Poincar\'e domain while $q$ lies in
the Siegel domain. In particular,
either $\fol$ is linearisable around $p$ or it is conjugate to a Poincar\'e-Dulac
normal form.

\vspace{0.1cm}

\noindent {\it Claim}. $\fol$ cannot admit a Poincar\'e-Dulac normal form around~$p$.

\noindent {\it Proof of the Claim}. Assume that the claim is false. Thus $\fol$ possesses a unique
separatrix at~$p$ which must coincide with the fiber $C$. In particular
$\lambda_2^p/\lambda_1^p = n \in \N^{\ast}$. Meanwhile,
Formula~(\ref{indexformula}) ensures that at the singular point~$q \in C$ of $\fol$
the eigenvalues $\lambda_1^q$, $\lambda_2^q$ can be chosen as 
$\lambda_1^q = 1$ and $\lambda_2^q = -n$.
In particular, there exists a separatrix $S_q$ of $\fol$ at $q$ which is transverse to the invariant
fiber $C$. Again $S_q$ can naturally be identified with a fixed point of the holonomy map of
$\fol$ around the fiber $C$. Since this holonomy map is elliptic, it must be of finite order equal to~$n$.
Indeed, the linear part of this map at the fixed point represented by $S_q$ is determined by the eigenvalues
$\lambda_1^q = 1$ and $\lambda_2^q = -n$ and hence it is a rotation of order~$n$. Thus the $n^{\rm th}$-iterate
of the holonomy map in question has a fixed point with multiplier equal to~$1$. Since the original 
holonomy map is an elliptic element, we conclude that its $n^{\rm th}$-iterate must coincide with the identity.

Consider the local structure of $\fol$ around the Siegel singular point~$q$, the previously mentioned
theorem of Mattei-Moussu ensures that $\fol$ is linearisable at~$q$ since so is the local holonomy map arising
from $S_q$ (a finite order map). Given that the eigenvalues are $1$ and~$-n$, we see that the local holonomy
map of $\fol$ at~$q$ determined by the fiber $C$ (the other local separatrix of $\fol$ at~$q$) must coincide
with the identity. To derive a contradiction proving the claim, just notice that 
$C \setminus \{ p,q\}$ is a cylinder so that loops winding once around the singular
point~$p$ are homotopic to loops winding once around the singular point~$q$ (up to choosing suitable orientations).
Thus the local holonomy map around~$p$ arising from the separatrix of $\fol$ at~$p$ induced by $C$ is
the identity. In view of Remark~\ref{holonomy_separatrix-poincaredulac}, this contradicts the fact that
$\fol$ is locally conjugate to a Poincar\'e-Dulac vector field. The claim is proved.\qed

To complete the proof of the lemma, note first that the claim implies that $\fol$ is linearisable around~$p$.
In particular, the separatrix $S_p$ of $\fol$ at $p$ which is transverse to $C$ does exist. Moreover,
in terms of the separatrix induced by $C$, its associated (local) holonomy map is linearisable as well as
a germ in ${\rm Diff}\, (\C,0)$. Again using the fact that $C \setminus \{ p,q\}$ is a cylinder,
there follows that the (local) holonomy map arising from the separatrix induced by $C$ {\it at the singular
point~$q$}\, is linearisable as well and, hence, Mattei-Moussu's result ensures that $\fol$ is linearisable at~$q$.
The lemma follows at once.
\end{proof}

Recall that among discrete subgroups of ${\rm PSL}\, (2,\C )$, i.e., among Kleinian groups, there are some
groups that happen to leave invariant a (real analytic) circle $S^1$ in $\CP^1$.
These groups are therefore conjugate to discrete subgroups of ${\rm PSL}\, (2,\R ) \subset {\rm PSL}\, (2,\C )$
so that they can be viewed as discrete groups acting on the hyperbolic disc and hence are called
{\it Fuchsian groups}. The limit sets of Fuchsian groups are naturally contained in the invariant circle $S^1$.
In fact, up to identifying the initial group to a subgroup of ${\rm PSL}\, (2,\R )$, the invariant circle
becomes identified with the boundary of the unit disc.
Finally, recall that a Fuchsian group is said to be of {\it first kind} if its limit set coincides with all of the unit circle.

\begin{lemma}
\label{Limitsetaccumulation}
Let $\fol$ be a Riccati foliation on the surface $F_1$ all of whose invariant fibers $C_1, \ldots ,C_k$ are
simple. Assume that the following conditions hold:
\begin{itemize}
  \item The local holonomy map around each invariant fiber $C_i$ is an elliptic element of ${\rm PSL}\, (2,\C )$.
  \item The global holonomy group $\Gamma \subset {\rm PSL}\, (2,\C )$ of the foliation $\fol$ is a fuchsian/quasifuchsian
  group.
\end{itemize}
Then the intersection $C_i \cap \overline{\fol \, (\Lambda)}$ is reduced to the singular point $p_i \in C_i$ (that belongs to
the Poincar\'e domain) of $\fol$. Moreover, the eigenvalues of $\fol$ at $p_i$ belong, in fact,
to $\Q_+$ and the foliation is linearisable around the singular point $p_i$.
\end{lemma}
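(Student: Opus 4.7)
The plan combines the Poincar\'e and Siegel normal forms supplied by Lemma~\ref{Twosingularpoints} with the elementary fact that a (quasi)fuchsian group $\Gamma$ of first kind has all its elliptic fixed points in the open domain of discontinuity $\CP^1 \setminus \Lambda$. In the Poincar\'e linearising coordinates $(u,v)$ at $p_i$, where $\fol$ is represented by $u\,\partial/\partial u + \lambda v\,\partial/\partial v$ with $\lambda = \lambda_2^p/\lambda_1^p >0$, $C_i=\{u=0\}$ and $S_{p_i}=\{v=0\}$, every non-separatrix leaf $\{v=cu^{\lambda}\}$ converges to $p_i$ as $u\to 0$ (because $\lambda>0$); since each leaf $L_\sigma$ through a point $\sigma\in\Lambda$ on a generic fiber $\Sigma$ takes this form in the chart, one gets $p_i\in\overline{\fol(\Lambda)}$ at once.

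To show that no regular point of $C_i$ lies in $\overline{\fol(\Lambda)}$, I would argue by contradiction. If $y\in C_i\setminus\{p_i,q_i\}$ belonged to $\overline{\fol(\Lambda)}$, pick a transversal $T_y$ to $\fol$ at $y$ and a sequence $t_n\to y$ in $T_y$ whose leaves $L_{t_n}$ lie in $\fol(\Lambda)$. Following these leaves along $C_i$ from $y$ toward $p_i$ identifies $t_n$ with a sequence $s_n$ on the transversal $T_{p_i}=\{u=u_1\}$; a short computation in the Poincar\'e chart shows $s_n$ converges to one of the two fixed points of the local holonomy $\xi_i$ on $T_{p_i}$. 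A M\"obius parallel transport $\phi\colon T_{p_i}\to\Sigma$ then takes this limit to a fixed point on $\Sigma$ of the elliptic element $\hat\xi_i\in\Gamma$ representing winding around $C_i$. Since $\phi(s_n)\in L_{t_n}\cap\Sigma\subset\Lambda$ and $\Lambda$ is closed, that limit lies in $\Lambda$ too, which is impossible. The Siegel normal form at $q_i$ similarly excludes $q_i$: it shows that the only leaves accumulating at $q_i$ are the separatrices $C_i$ and $S_{q_i}$, and neither can belong to $\overline{\fol(\Lambda)}$, for again $S_{q_i}$ meets $\Sigma$ at another elliptic fixed point of $\hat\xi_i$, lying at positive distance from $\Lambda$. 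Hence $C_i\cap\overline{\fol(\Lambda)}=\{p_i\}$.

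For the ``moreover'' part, linearisability of $\fol$ at $p_i$ is already part of Lemma~\ref{Twosingularpoints}. To see that $\lambda\in\Q_+$, use that $\Gamma$ is discrete and $\hat\xi_i$ fixes a point in the domain of discontinuity, where the action is properly discontinuous: the stabiliser of such a point is a finite cyclic group, so $\hat\xi_i$ has finite order $n$ and its multiplier $e^{2\pi i\lambda}$ is an $n$-th root of unity. Combined with $\lambda\in\R_+$ from Lemma~\ref{Twosingularpoints}, this yields $\lambda\in\Q_+$. The main obstacle will be the leaf-tracking in the contradiction argument: one must carefully verify that the sequence $s_n$ on $T_{p_i}$ really converges to one of the two fixed points of $\xi_i$ (and not to another point of $\CP^1$), and this is where the Poincar\'e nature of $p_i$ is essential, since the symmetric computation at the Siegel point $q_i$ would yield divergence instead and explains why $C_i\cap\overline{\fol(\Lambda)}$ detects only the Poincar\'e singular point.
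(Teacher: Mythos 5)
Your overall strategy matches the paper's: exclude points of $C_i$ from $\overline{\fol\,(\Lambda)}$ by playing the fixed points of the elliptic local holonomy against the fact that elliptic fixed points of a (quasi)fuchsian group lie off the limit set, and obtain $\lambda_2^{p_i}/\lambda_1^{p_i}\in\Q_+$ from the finiteness of the order of that holonomy in the discrete group $\Gamma$ (your ``moreover'' part, and your exclusion of $q_i$, are essentially the paper's arguments). However, the step you yourself flag as the main obstacle --- excluding regular points of $C_i$ by tracking leaves toward the \emph{Poincar\'e} point $p_i$ --- has a genuine gap. In the linearising chart $u\,\partial/\partial u+\lambda v\,\partial/\partial v$ ($\lambda>0$), a sequence of leaves accumulating on a regular point $(0,v_0)$ is $v=v_0(u/u_n)^{\lambda}$ with $u_n\to0$; their traces on a fixed fiber $\{u=u_1\}$ have $|v|=|v_0|\,|u_1/u_n|^{\lambda}\to\infty$, so they \emph{leave} the linearising chart (indeed each such leaf typically exits through the top $|v|=\varepsilon$ before even reaching $\{u=u_1\}$). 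The local normal form therefore does not let you conclude that $s_n$ converges to the \emph{other} fixed point of $\xi_i$: that fixed point lies outside the chart, and ``$v\to\infty$ in the chart'' only says the points escape a neighbourhood of $x_{p_i}$, not that they converge to $x_{q_i}$. Some global input about the fiber $\CP^1$ is needed, and the Poincar\'e chart alone does not supply it.

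The paper closes exactly this step by routing the argument through the \emph{Siegel} point $q_i$ instead, which is where the local model does all the work: if $\fol(\Lambda)$ accumulated on a regular point of $C_i$, then (the regular part of $C_i$ being a single leaf and $\overline{\fol\,(\Lambda)}$ being closed and invariant) it would accumulate on regular points of $C_i$ arbitrarily close to $q_i$; in the linearising Siegel coordinates $u\,\partial/\partial u-\mu v\,\partial/\partial v$ the corresponding leaves $u^{\mu}v=c$ have $c\to0$ and hence also accumulate on regular points of $S_{q_i}$. This forces $x_{q_i}=S_{q_i}\cap\Sigma$ to lie in $\overline{\fol\,(\Lambda)}\cap\Sigma=\Lambda$, contradicting the fact that no elliptic fixed point lies in the limit set. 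You already have all the ingredients for this (your own exclusion of $q_i$ uses the same Siegel computation); replacing your Poincar\'e-side tracking by this Siegel-side argument repairs the proof. Your direct verification that $p_i\in\overline{\fol\,(\Lambda)}$ (the leaves $v=cu^{\lambda}$ converge to the origin since $\lambda>0$) is a nice explicit touch that the paper leaves implicit.
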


\begin{proof}
Let $C_i$ be a (simple) invariant fiber. According to Lemma~\ref{Twosingularpoints} both singular points $p_i$ and $q_i$
have real (non-zero) eigenvalues and the foliation is linearisable around each of them. With the preceding
notation, these eigenvalues verify $\lambda_2^{p_i}/\lambda^{p_i}_1 = - \lambda_2^{q_i}/\lambda^{q_i}_1 \in \R^{\ast}$.
Also, the separatrix of $\fol$
at $p_i$ (resp. $q_i$) transverse to $C_i$ will be denoted by $S_{p_i}$ (resp. $S_{q_i}$). Consider a generic fiber
$\Sigma \subset F_1$ near $C_i$ in the sense that on a fiberd neighbourhood of $C_i$ the separatrices
$S_{p_i},S_{q_i}$ intersect $\Sigma$ at unequivocally determined points. Then let these points be denoted 
by $x_{p_i},x_{q_i}$, respectively. As already mentioned, the points $x_{p_i}$, $x_{q_i}$ are fixed by the local holonomy map of
$\fol$ around $C_i$. A direct calculation using linearising coordinates for $\fol$ around $p_i$, $q_i$ shows that
the multipliers of the (elliptic) holonomy map in question at the fixed point $x_{p_i}$ (resp. $x_{q_i}$) are given
by $e^{2\pi i \lambda_2^{p_i}/\lambda^{p_i}_1}$ (resp. $e^{-2\pi i \lambda_2^{p_i}/\lambda^{p_i}_1}$). This elliptic
holonomy map must, however, be of finite order since it belongs to a discrete subgroup of ${\rm PSL}\, (2,\C )$.
It follows that $\lambda_2^{p_i}/\lambda^{p_i}_1 \in \Q_+^{\ast}$.

It only remains to prove that $C_i \cap \overline{\fol \, (\Lambda)} = \{ p_i \}$. The argument is similar
to the one employed in Section~\ref{ahlforscurrent_examples}. In the sequel, we take advantage of the fact
that $\fol$ is linearisable at both $p_i$ and $q_i$. First, we claim that
$C_i \cap \overline{\fol \, (\Lambda)} \subset \{p_i,q_i\}$. If the claim is false then $\fol(\Lambda)$ accumulates on
regular points of the invariant fiber $C_i$. Since the foliation $\fol$ has a (linearisable) saddle singular
point at $q_i$, the preceding implies that $\fol(\Lambda)$ accumulates on a regular points of the separatrix
$S_{q_i}$ as well. Hence the fixed point $x_{q_i} \in \Sigma$ belongs to $\overline{\fol \, (\Lambda)}$. However,
$\Sigma \cap \overline{\fol \, (\Lambda)}$ is nothing but the limit set of $\Gamma$ represented in the transverse
fiber $\Sigma$. This is impossible since, in a Fuchsian group, no fixed point of an elliptic
element can lie in the corresponding limit set. Thus we must have $C_i \cap \overline{\fol \, (\Lambda)} \subset \{p_i,q_i\}$.
However, since $\fol$ is linearisable around $q_i$, if $\fol(\Lambda)$ accumulates at $q_i$ then it must accumulate
at regular points of $S_{q_i}$ as well so that the argument above rules this possibility out as well. In conclusion, we
have proved that $C_i \cap \overline{\fol \, (\Lambda)} = \{ p_i \}$ and the lemma follows.
\end{proof}

We can now state Proposition~(\ref{LeviFlat}) which yields a criterion for a Riccati foliation to possess a
{\it singular real-analytic} Levi-flat where all leaves of the induced (Levi) foliation, when the global holonomy
group of the Riccati foliation $\fol$ is a Fuchsian group of ${\rm PSL}\, (2,\C )$ of first kind, are dense
(in the Levi-flat itself).

\begin{prop}
\label{LeviFlat}
Let $\fol$ be a Riccati foliation on the surface $F_1$ all of whose invariant fibers $C_1, \ldots ,C_k$ are
simple. Assume that the following conditions hold:
\begin{itemize}
  \item The local holonomy map around each invariant fiber $C_i$ is an elliptic element of ${\rm PSL}\, (2,\C )$.
  \item The global holonomy group $\Gamma \subset {\rm PSL}\, (2,\C )$ of the foliation $\fol$ is a Fuchsian group.
\end{itemize}
Then there exists a singular real analytic variety $\mathcal{L} (\fol)$ of (real) dimension~$3$ invariant by the foliation
$\fol$. Furthermore, the singular points of $\mathcal{L} (\fol)$ are all isolated and correspond to the
singular points of the foliation $\fol$ lying in the Poincar\'e domain. 
\end{prop}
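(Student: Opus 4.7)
The plan is to realise $\mathcal{L}(\fol)$ as the closure of the $\fol$-saturation of the $\Gamma$-invariant real analytic circle and then verify the required real analytic structure locally. Fix a generic fiber $\Sigma$ of $F_1$ and identify $\Sigma \simeq \CP^1$ so that the global holonomy group of $\fol$ becomes the Fuchsian subgroup $\Gamma \subset \psl$. By assumption $\Gamma$ preserves a real analytic circle $S^1 \subset \Sigma$, and I set
\[
\mathcal{L}(\fol) \; = \; \overline{\fol(S^1)} \, .
\]
This set is closed and $\fol$-invariant by construction. Lemma~\ref{Limitsetaccumulation} already asserts that $\mathcal{L}(\fol)$ meets each invariant fiber $C_i$ exactly at the Poincar\'e-type singular point $p_i$, that $\fol$ is linearisable at $p_i$, and that the quotient of its eigenvalues at $p_i$ is a positive rational number.

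Next I would check real analyticity of $\mathcal{L}(\fol)$ on the complement of the singular set $\{p_1, \ldots, p_k\}$. Since $\fol$ is transverse to the fibers of $F_1$ away from $C_1 \cup \cdots \cup C_k$, near any regular point $m \in \mathcal{L}(\fol) \setminus \{p_1, \ldots, p_k\}$ there is a foliated chart $(z_1, z_2) \in \Delta \times \Sigma_m$ in which the plaques of $\fol$ are $\{z_2 = {\rm const}\}$ and the transversal $\Sigma_m$ identifies, via parallel transport along $\fol$, with an open arc of $\Sigma$ meeting $S^1$. In such a chart $\mathcal{L}(\fol)$ becomes the product $\Delta \times (S^1 \cap \Sigma_m)$, manifestly real analytic of real dimension~$3$. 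The change of foliated charts is governed by the holonomy transport, which realises M\"obius elements of $\Gamma$; these preserve $S^1$ real analytically, so the local pictures glue into a real analytic submanifold of $F_1 \setminus \{p_1, \ldots, p_k\}$.

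The key step, which I expect to be the main obstacle, is the analysis of the local structure of $\mathcal{L}(\fol)$ near each Poincar\'e point $p_i$. By Lemma~\ref{Limitsetaccumulation} there are coordinates $(u,v)$ around $p_i$ in which $\fol$ is represented by the linear vector field $u \, \partial/\partial u + (p/q) v \, \partial/\partial v$ with $p, q \in \N^{\ast}$, the invariant fiber $C_i$ corresponding to $\{u=0\}$ and the smooth transverse separatrix $S_{p_i}$ to $\{v=0\}$. The local holonomy around $C_i$ is the rotation $v \mapsto e^{2\pi i p/q} v$, so the $\Gamma$-invariant circle intersects a nearby transversal $\{u = u_0\}$ in a round circle $|v| = r$ (any other rotation-invariant real analytic circle would reduce to this case after possibly adjusting the linearising coordinates). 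Saturating this circle by the local flow and using that the leaves of the linearised foliation are given by $v^q = c u^p$, one obtains the explicit local expression
\[
\mathcal{L}(\fol) \cap U_i \; = \; \{(u,v) \, : \, (v \overline{v})^{q} = c \, (u \overline{u})^{p} \}
\]
for a suitable constant $c > 0$, which is real analytic of real dimension~$3$ with a single isolated singularity of orbifold-type at $p_i$ (a quotient singularity for the $\Z/q\Z$-action generated by the holonomy).

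Finally, I would verify that the local model near $p_i$ glues consistently with the description on the regular part. This amounts to checking that the punctured local set $\{(v \overline{v})^q = c (u \overline{u})^p\} \setminus \{p_i\}$ coincides with the piece of $\overline{\fol(S^1)}$ produced by the foliated chart analysis, which is immediate once one uses the same linearising coordinates to parametrise leaves and transport $S^1$ into a neighbourhood of $p_i$. Putting the two descriptions together yields the desired globally defined singular real analytic variety $\mathcal{L}(\fol)$ of real dimension~$3$, invariant by $\fol$, whose singular locus is precisely the collection of Poincar\'e-domain singular points $p_1, \ldots, p_k$ of $\fol$.
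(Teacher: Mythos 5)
Your overall strategy --- saturate the $\Gamma$-invariant circle by $\fol$, check real analyticity on the regular part via foliated charts, and then analyse the local structure at each Poincar\'e point $p_i$ --- is the same as the paper's, and you correctly single out the local analysis at $p_i$ as the crux. (A side remark: the paper saturates the limit set $\Lambda$ rather than the full invariant circle; for the first-kind groups relevant to Theorem~A these coincide, so this is harmless, and your use of Lemma~\ref{Limitsetaccumulation} is otherwise appropriate.) The discussion of the regular part is fine.

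The gap is in the key step itself. You assert that, in linearising coordinates $(u,v)$ at $p_i$, the trace of the invariant circle on a local transversal $\{u=u_0\}$ is a round circle $|v|=r$, dismissing the general case with the parenthetical claim that any other rotation-invariant real analytic circle ``would reduce to this case after possibly adjusting the linearising coordinates''. That reduction is precisely what needs proof, and it is not obvious. The invariant circle of $\Gamma$ is indeed round in the projective coordinate of a genuine fiber $\Sigma$ (holonomy transport between non-invariant fibers of a Riccati foliation is a M\"obius map), but the linearising chart at $p_i$ is only a germ of biholomorphism with no reason to induce a projective coordinate on the transversal: the conjugacy between the two realisations of the finite-order local holonomy is a germ $h$ commuting with a rotation of order $m'\ge 2$, hence of the form $h(w)=a_1w+a_{m'+1}w^{m'+1}+\cdots$, and such a germ does not take round circles to round circles unless it is linear. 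Moreover, the changes of coordinates preserving the linear normal form of $\fol$ at $p_i$ are constrained (they must respect the meromorphic first integral $u^{n}v^{-m}$ up to a M\"obius reparametrisation of its values), so one cannot simply post-compose with an arbitrary germ commuting with the rotation in order to straighten the circle. Consequently your explicit equation $(v\overline{v})^{q}=c\,(u\overline{u})^{p}$ describes a model case rather than the actual set $\overline{\fol\,(\Lambda)}$ near $p_i$. This is exactly the content of the paper's Lemma~\ref{addedlemmaanalytic}, whose proof is deferred to Section~\ref{buldingharmoniccurrents-I} and is genuinely nontrivial: one blows up $p_i$ until the transformed foliation becomes transverse to one component $D_J$ of the exceptional divisor, transports a $\xi$-invariant rational function $f$ from the fiber $\Sigma$ (where the circle really is $\R\cup\{\infty\}$ in a projective coordinate) along the leaves to get a holomorphic first integral $\widetilde F$, and then exhibits $\mathcal{L}(\fol)\cap U$ as $F^{-1}(\R\cup\{\infty\})$, so that $F=\overline{F}$ is the desired real analytic equation. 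Without an argument of this kind --- or an honest proof that the circle can be made round in suitable linearising coordinates --- the proposal does not establish real analyticity of $\mathcal{L}(\fol)$ at the points $p_i$.
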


\begin{proof}
Denote by $\fol(\Lambda)$ the saturated set of the limit set $\Lambda$ by the foliation $\fol$ and
$\overline{\fol \, (\Lambda)}$ the closure of $\fol(\Lambda)$. Clearly, $\fol(\Lambda)$ is {\it locally} a
real-analytic 3-dimensional manifold invariant by the foliation $\fol$ and, hence, satisfies the Levi condition
for flatness. To prove the proposition we need to describe the accumulation of the set $\fol(\Lambda)$ on each invariant fiber
$C_i$, $i=1,\cdots,k$. In this direction,
Lemma~(\ref{Limitsetaccumulation}) ensures that the intersection $C_i \cap \overline{\fol \, (\Lambda)}$ is reduced
to the singular point $p_i$ whose eigenvalues $\lambda_1^{p_i}$ and $\lambda_2^{p_i}$ satisfy
$\lambda_2^{p_i}/\lambda_1^{p_i} \in \Q_+$.
The proof of the proposition now follows from Lemma~\ref{addedlemmaanalytic}
below stating that $\overline{\fol \, (\Lambda)}$ still is an analytic variety
around $p_i$.
\end{proof}

\begin{lemma}
	\label{addedlemmaanalytic}
	With the notation of Proposition~\ref{LeviFlat}, the set $\overline{\fol \, (\Lambda)}$ is locally real
	analytic around $p_i$.
\end{lemma}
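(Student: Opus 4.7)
The plan is to work in linearising coordinates around $p_i$ (provided by Lemma~\ref{Twosingularpoints}) and to exploit the facts that, in the Fuchsian setting of Proposition~\ref{LeviFlat}, the limit set $\Lambda$ is a generalised circle in $\CP^1$ and that the elliptic local holonomy around $C_i$ forces strong rotational symmetry on the transverse picture of $\Lambda$. Together these will produce an explicit polynomial equation in $z_1, z_2, \overline{z_1}, \overline{z_2}$ for $\overline{\fol \, (\Lambda)}$ on a neighbourhood of $p_i$.

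More precisely, I would choose coordinates $(z_1, z_2)$ centred at $p_i$ in which $C_i = \{z_1 = 0\}$, the separatrix $S_{p_i}$ transverse to $C_i$ coincides with $\{z_2 = 0\}$, and $\fol$ is defined by the vector field $p z_1 \partial / \partial z_1 + q z_2 \partial / \partial z_2$; here $\lambda_2^{p_i}/\lambda_1^{p_i} = p/q$ is the positive rational eigenvalue ratio provided by Lemma~\ref{Limitsetaccumulation}, with $p, q > 0$ coprime. A direct parametrisation of the leaf through $(c, z_2^{(0)}) \in \Sigma_c := \{z_1 = c\}$ gives $z_1 (t) = ce^{pt}$, $z_2 (t) = z_2^{(0)} e^{qt}$, so the local holonomy around $C_i$ on $\Sigma_c$ in the coordinate $z_2$ equals the rotation $z_2 \mapsto e^{2\pi i q/p} z_2$. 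As this rotation is elliptic and in particular non-trivial, it has exact order $p \geq 2$.

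The central step is then to identify the representation $\Lambda_c \subset \Sigma_c$ of $\Lambda$ on $\Sigma_c$ with a Euclidean circle centred at $z_2 = 0$. Since $\Gamma$ is Fuchsian of the first kind, $\Lambda$ is the image of $\R \cup \{\infty\}$ under a Möbius transformation of $\CP^1$, i.e., a \emph{generalised circle} (either a Euclidean circle or a Euclidean line together with $\infty$); as holonomy transport of a Riccati foliation identifies fibres by Möbius transformations, $\Lambda_c$ is again a generalised circle in $\CP^1 \cong \Sigma_c$. By Lemma~\ref{Limitsetaccumulation}, $\Lambda_c$ avoids the two fixed points $z_2 = 0$ and $z_2 = \infty$ of the local holonomy; and by $\Gamma$-invariance of $\Lambda$ it is preserved by the rotation $z_2 \mapsto e^{2\pi i q/p} z_2$ of order $p \geq 2$. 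But a generalised circle not containing $0$ and invariant under a non-trivial rotation about $0$ must be a Euclidean circle centred at $0$: an invariant Euclidean line would have to pass through $0$ (excluded), while a rotation by $\zeta \neq 1$ sends a Euclidean circle of centre $z_0$ to one of centre $\zeta z_0$, forcing $z_0 = 0$. Hence $\Lambda_c = \{|z_2| = r(c)\}$ for some $r(c) > 0$.

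To finish, I would use that holonomy along $\fol$ from $\Sigma_{c_0}$ to $\Sigma_c$ sends $z_2$ to $(c/c_0)^{q/p} z_2$, so $r(c) = K |c|^{q/p}$ with $K = r(c_0)/|c_0|^{q/p}$ independent of $c$. A point $(z_1, z_2)$ with $z_1 \neq 0$ therefore lies in $\fol (\Lambda)$ if and only if $|z_2| = K |z_1|^{q/p}$, that is
\[
(z_2 \overline{z_2})^p \;-\; K^{2p} (z_1 \overline{z_1})^q \;=\; 0 \, .
\]
The left hand side is polynomial in $z_1, z_2, \overline{z_1}, \overline{z_2}$, and hence real analytic on a full neighbourhood of $p_i$; its only zero on $\{z_1 = 0\}$ is $(0, 0) = p_i$, in agreement with Lemma~\ref{Limitsetaccumulation}. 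This supplies a real analytic defining equation for $\overline{\fol (\Lambda)}$ near $p_i$. I expect the main hurdle to be the symmetry-based identification of $\Lambda_c$ as a Euclidean circle centred at the origin, as it requires simultaneously invoking the generalised circle structure coming from the Fuchsian hypothesis and the transverse holonomy picture; once this step is secured, the remainder of the argument is a direct calculation.
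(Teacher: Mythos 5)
Your strategy --- reduce to the transverse trace of $\overline{\fol\,(\Lambda)}$ on fibres near $p_i$, identify that trace with a round circle centred at the separatrix point, and saturate by the linear flow --- is viable and, if completed, would even give a sharper conclusion than the paper's (an explicit real polynomial defining equation). It is genuinely different from the paper's route, which never localises the ``roundness'' near $p_i$: the paper fixes a reference fibre $\Sigma$, normalises so that $\overline{\fol\,(\Lambda)}\cap\Sigma=\R\cup\{\infty\}$, composes the leafwise projection onto $\Sigma$ with a rational map $f$ invariant under the finite-order holonomy, and shows (after a blow-up at $p_i$) that the resulting first integral $F$ extends holomorphically; the Levi-flat is then $F^{-1}$ of a circle, hence analytic.

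The genuine gap is the step where $\Lambda_c$ is declared to be a generalised circle \emph{in the linearising coordinate} $z_2$ on $\Sigma_c=\{z_1=c\}$. Two different structures are conflated there: the projective structure of the genuine Riccati fibres (with respect to which holonomy transport is M\"obius and the trace of $\Lambda$ is indeed a generalised circle), and the linearising chart (with respect to which the local holonomy is the exact rotation $z_2\mapsto e^{2\pi i q/p}z_2$ and the transport is $z_2\mapsto (c/c_0)^{q/p}z_2$). A priori the level set $\{z_1=c\}$ need not be a fibre of the fibration at all, and even when it is, the restriction of $z_2$ to it differs from a projective fibre coordinate by a biholomorphism $\phi$ of a disc with $\phi(0)=0$; the only constraint coming from the holonomy is $\phi(\zeta z)=\zeta\phi(z)$, which permits $\phi(z)=z+az^{p+1}+\cdots$ and therefore does not take round circles to round circles. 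So your symmetry argument, as written, only shows that $\Lambda_c$ is a real-analytic Jordan curve invariant under the order-$p$ rotation, not that it equals $\{|z_2|=r(c)\}$, and the final polynomial equation is unjustified. The gap is repairable: work in a fibred neighbourhood of $C_i$, send the two separatrices $S_{p_i}$, $S_{q_i}$ to the sections $\{w=0\}$ and $\{w=\infty\}$ by a fibrewise M\"obius change, write the Riccati system as $\dot{x}=xu(x)$, $\dot{w}=c_1(x)w$ with $u(0)\neq 0$, and integrate; the substitution $\tilde{w}=w\,e^{-H(x)}$ with a suitable holomorphic $H$, which is fibrewise linear and hence M\"obius, linearises the foliation while keeping $\tilde{w}$ projective on every fibre. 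In those adapted coordinates your computation goes through verbatim (one should also record, via Lemma~\ref{Limitsetaccumulation} and compactness, that for $|c|$ small the whole trace $\Lambda_c$ lies in the chart around $\{\tilde{w}=0\}$).
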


The proof of Lemma~\ref{addedlemmaanalytic} will be deferred to Section~\ref{buldingharmoniccurrents-I}
since it employs some blowing up procedure that, incidentally, will also be instrumental in constructing 
the harmonic current $T$ mentioned in items~(c) and~(d) of Theorem~A.

As a by-product of the previous discussion, we also obtain Proposition~(\ref{LeviFlatII}) below which
is valid for Riccati foliations whose global holonomy groups are quasifuchsian groups and which
dispenses with Lemma~\ref{addedlemmaanalytic}.

\begin{prop}
\label{LeviFlatII}
Let $\fol$ be a Riccati foliation on $F_1$ all of whose invariant fibers $C_1, \ldots ,C_k$ are
simple. Assume that the following conditions hold:
\begin{itemize}
  \item The local holonomy map $M_i$ around each invariant fiber $C_i$ is an elliptic element of ${\rm PSL}\, (2,\C )$.
  \item The global holonomy group $\Gamma \subset {\rm PSL}\, (2,\C )$ of $\fol$ is a quasifuchsian group.
\end{itemize}
Then there exists a closed set $\mathcal{L} (\fol)$ invariant by the foliation $\fol$ which is a singular topological
manifold of (topological) dimension~$3$ and Hausdorff dimension strictly greater than~$3$.
Furthermore, the singular points of $\mathcal{L} (\fol)$ are all isolated and correspond
to the singular points of the foliation $\fol$ lying in the Poincar\'e domain.
\end{prop}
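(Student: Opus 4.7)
The plan is to set $\mathcal{L}(\fol) := \overline{\fol(\Lambda)}$, where $\Lambda$ is the limit set of the quasifuchsian group $\Gamma$, interpreted in a generic fiber of $F_1$ via the global holonomy of $\fol$. Being a quasifuchsian limit set that is not a round circle, $\Lambda$ is a nowhere-differentiable Jordan curve whose Hausdorff dimension exceeds~$1$ (Bowen's theorem, already quoted in Example~\ref{Riccati_quasifuchsian}). The set $\mathcal{L}(\fol)$ is automatically closed and $\fol$-invariant, so the task is to describe its local geometry everywhere.

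Away from the invariant fibers, any point admits a foliated chart $U \simeq \Delta \times \Sigma'$ identifying $\mathcal{L}(\fol) \cap U$ with $\Delta \times J$, where $J \subset \Sigma'$ is a Jordan arc obtained by transporting $\Lambda$ to the transverse fiber-disc via the global holonomy. This yields the topological $3$-manifold structure of $\mathcal{L}(\fol)$ on its regular part and, since $J$ itself has Hausdorff dimension strictly greater than~$1$, the Hausdorff dimension of $\mathcal{L}(\fol)$ exceeds~$3$ there.

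Near an invariant fiber $C_i$, Lemma~\ref{Limitsetaccumulation}---whose statement explicitly covers the quasifuchsian case---asserts $C_i \cap \mathcal{L}(\fol) = \{p_i\}$, with $p_i$ a Poincar\'e-type singularity where $\fol$ is linearisable and the eigenvalue ratio $n_i = \lambda_2^{p_i}/\lambda_1^{p_i}$ lies in $\Q_+^{\ast}$. In linearising coordinates $(u,v)$ with $C_i = \{u=0\}$, the leaves are parametrised by $t \mapsto (u_0 e^t, v_0 e^{n_i t})$, and the local holonomy around $C_i$ acts on the $v$-disc as the rotation $v \mapsto e^{2\pi i n_i} v$, whose order equals the denominator of $n_i$ in lowest terms. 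The Jordan curve $\Lambda$ is invariant under this rotation and avoids its fixed point $v=0$. Transporting $\Lambda$ along the leaves via the (multi-valued) map $(u, v_0) \mapsto (u, v_0 u^{n_i})$ identifies $\mathcal{L}(\fol) \setminus \{p_i\}$ near $p_i$ with the quotient of $\Delta^{\ast} \times \Lambda$ by the free $\Z$-action generated by the monodromy; this quotient is a topological $3$-manifold. Letting $u \to 0$ collapses the whole picture to $p_i$, which thereby appears as an isolated orbifold-type cone singularity---the link of $p_i$ is the mapping torus of a finite-order rotation of $\Lambda$. Once again the Jordan nature of $\Lambda$ forces the Hausdorff dimension at $p_i$ to exceed~$3$.

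The main obstacle is this last step: rigorously handling the multi-valuedness of $u^{n_i}$ for rational but generally non-integer $n_i$ via a finite cover determined by the denominator of $n_i$, and verifying that the quotient near $p_i$ really is a singular topological manifold with an isolated cone-on-torus singularity at $p_i$. In the Fuchsian case Lemma~\ref{addedlemmaanalytic} provides a clean real-analytic description via a blow-up argument; here the transverse object is merely a Jordan curve, so one must execute the entire gluing at the topological level, while tracking the finite-order action carefully enough to conclude that $p_i$ is precisely an orbifold-type cone singularity of the kind asserted in Theorem~A.
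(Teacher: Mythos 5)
Your proposal is correct and follows essentially the same route as the paper: define $\mathcal{L}(\fol)=\overline{\fol\,(\Lambda)}$, invoke Lemma~\ref{Limitsetaccumulation} to get $C_i\cap\mathcal{L}(\fol)=\{p_i\}$, and deduce the Hausdorff dimension bound from Bowen's theorem on quasicircles. The ``main obstacle'' you flag (the topological cone structure at $p_i$ via the multi-valued map $v\mapsto c\,u^{n_i}$) is not treated any more explicitly in the paper's own proof, which simply notes that the construction is identical to the Fuchsian case of Proposition~\ref{LeviFlat} with the real-analyticity question (Lemma~\ref{addedlemmaanalytic}) dropped; your extra detail there is a welcome addition rather than a deviation.
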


\begin{proof}
The construction of $\mathcal{L} (\fol)$ is exactly as in the previously considered Fuchsian case
(see Proposition~\ref{LeviFlat}). The additional difficulty of proving that $\mathcal{L} (\fol)$ is real
analytic plays no role in the present case. Meanwhile, the only issue that requires explanation is the claim
that the Hausdorff dimension of $\mathcal{L} (\fol)$ is strictly greater than~$3$. This, however,
follows from Bowen's theorem \cite{bowen} asserting that the Hausdorff dimension of a quasicircle is
strictly greater than~$1$. Thus, $\mathcal{L} (\fol)$ can be pictured as a lamination by Riemann surfaces
which transversely has Hausdorff dimension strictly greater than~$1$ so that the Hausdorff dimension of
the laminated space $\mathcal{L} (\fol)$ is strictly greater than~$3$.
\end{proof}

Recalling that the direct image of a real analytic set by an analytic map may fail to be real analytic,
the last ingredient needed in the proof of Theorem~\ref{a&b} is Lemma~\ref{RationalCurve} below. 

\begin{lemma}
\label{RationalCurve}
Let $\fol$ and $\overline{\fol \, (\Lambda)}$ be as in Proposition~\ref{LeviFlatII}. Denote by $\mathcal{C}$
the $(-1)$-rational curve contained in the Hirzebruch surface $F_1$. Then we have
$$
\overline{\fol \, (\Lambda)} \cap \mathcal{C} = \phi \, .
$$ 
\end{lemma}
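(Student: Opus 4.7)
The plan is to proceed by contradiction. Suppose some point $x_0$ lies in $\mathcal{C} \cap \overline{\fol \, (\Lambda)}$, and I will derive a contradiction in two stages, depending on whether or not $\mathcal{C}$ is $\fol$-invariant.

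First I would rule out the case in which $\mathcal{C}$ is $\fol$-invariant. If $\mathcal{C}$ were an invariant section of the bundle projection $\CP^1$-bundle $F_1 \to \CP^1$, then every loop in $\CP^1 \setminus \{p_1, \ldots, p_k\}$ would lift to itself along the leaf $\mathcal{C}$, so the intersection point $c = \mathcal{C} \cap \Sigma$ with any generic fiber $\Sigma$ would be a common fixed point of the entire global holonomy group $\Gamma$. But a (quasi)Fuchsian group of first kind is non-elementary, hence admits no global fixed point on $\CP^1$ (a common fixed point would force $\Gamma$ to lie in the solvable stabiliser of that point, contradicting the presence of non-commuting hyperbolic elements with distinct fixed points on the limit circle). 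Thus $\mathcal{C}$ is not $\fol$-invariant.

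Assuming now that $\mathcal{C}$ is not $\fol$-invariant, I would translate the intersection condition into a holomorphic problem on the base. For each generic fiber $\Sigma_p$, the saturated nature of $\overline{\fol \, (\Lambda)}$ gives $\overline{\fol \, (\Lambda)} \cap \Sigma_p = h_{p_0 \to p}(\Lambda)$, where $h_{p_0 \to p}$ is the holonomy along a path from a fixed basepoint $p_0$. Setting $y(\tilde p) := h_{p_0 \to p}^{-1}(c_p)$, where $c_p = \mathcal{C} \cap \Sigma_p$, one obtains a holomorphic $\Gamma$-equivariant map $y$ on the universal cover of $\CP^1 \setminus \{p_1, \ldots, p_k\}$ (viewed as a hyperbolic orbifold in our setting), and the intersection $\mathcal{C} \cap \overline{\fol \, (\Lambda)}$ corresponds precisely to $y^{-1}(\Lambda)$. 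The assumed non-invariance of $\mathcal{C}$ is equivalent to $y$ being non-constant. The goal then becomes showing that $y^{-1}(\Lambda) = \emptyset$. The limit set $\Lambda$ is a real-analytic circle (Fuchsian case) or Jordan quasicircle (quasifuchsian case), bounding two topological discs in $\CP^1$; I would show that the image of $y$ lies in exactly one of these discs. The behaviour of $y$ near each puncture $p_i$ is controlled by Lemma~\ref{Limitsetaccumulation}: since $\Lambda$ accumulates on $C_i$ only at the Poincar\'e singular point $p_i$, the limit values of $y$ near the preimages of $p_i$ in the universal cover are constrained to lie close to the elliptic fixed points of the local holonomy, which for (quasi)Fuchsian groups of first kind sit in the complement of $\Lambda$. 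Combined with the $\Gamma$-equivariance and a developing-map-style argument, this pins the image of $y$ inside a single connected component of $\CP^1 \setminus \Lambda$. In particular $y$ never takes values in $\Lambda$, contradicting the existence of $x_0$.

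The hardest part will be this last implication -- namely, that the $\Gamma$-equivariant holomorphic map $y$ really does land in a single component of $\CP^1 \setminus \Lambda$. This requires exploiting rather subtle features of the Fuchsian/quasifuchsian uniformisation of the base orbifold, together with the precise behaviour of $y$ at the punctures reflecting the linearizability of $\fol$ at the $p_i$'s (Lemma~\ref{Twosingularpoints}) and the rational nature of the eigenvalue ratios (Lemma~\ref{Limitsetaccumulation}).
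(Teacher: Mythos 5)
Your setup is essentially the paper's: the map $y$ you define on the holonomy cover of the punctured base is exactly the equivariant ``developing map'' $\mathcal{D}$ of the section $\mathcal{C}_0=\mathcal{C}\setminus\{b_1,\ldots,b_k\}$ used in the paper, and your preliminary step ruling out $\fol$-invariance of $\mathcal{C}$ (no global fixed point for a non-elementary group) is correct and is a point the paper leaves implicit. The problem is the final step, which you yourself flag as the hardest part and for which your proposed mechanism cannot work. You want to conclude $y^{-1}(\Lambda)=\emptyset$ from $\Gamma$-equivariance together with the asymptotics of $y$ at the punctures. But exactly the same data is available for the strict transform of a generic projective line, i.e.\ for a generic $(+1)$-section $S$ of $F_1$: such an $S$ is again a non-invariant section, it meets each invariant fiber $C_i$ at a single point which generically avoids $\overline{\fol\,(\Lambda)}\cap C_i=\{p_i\}$, and its developing map is non-constant and $\Gamma$-equivariant. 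Yet these sections sweep out all of $F_1\setminus\mathcal{C}$, so most of them \emph{do} meet $\overline{\fol\,(\Lambda)}$ and their developing maps \emph{do} hit $\Lambda$. Hence no argument using only equivariance and puncture asymptotics can single out $\mathcal{C}$; some input specific to the $(-1)$-curve is indispensable. Moreover, the asymptotic claim itself is misleading: over \emph{all} lifts of a small punctured neighbourhood of $p_i$ the cluster values of $y$ form an entire $\Gamma$-orbit, and since $\Gamma$ is of the first kind such an orbit accumulates on all of $\Lambda$, so the boundary behaviour provides no separation from the limit set. Finally, note that ``image contained in one component of $\CP^1\setminus\Lambda$'' is, for the connected open image of a non-constant holomorphic map, literally equivalent to ``image disjoint from $\Lambda$'', so phrasing the goal this way does not by itself reduce the difficulty.

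The ingredient the paper uses instead is dynamical rather than boundary-asymptotic: letting $U=\mathcal{D}(\overline{\mathcal{C}}_0)\subset\Sigma$ denote the (open, $\Gamma$-invariant) image of the developing map, one identifies the quotient $U/\Gamma$ with $\overline{\mathcal{C}}_0/G\simeq\mathcal{C}_0$, which is a Hausdorff surface. This forces the action of $\Gamma$ on $U$ to be properly discontinuous, hence $U$ is contained in the domain of discontinuity of the quasifuchsian group, which for a group of the first kind is precisely $\CP^1\setminus\Lambda$; therefore $U\cap\Lambda=\emptyset$, and since $\fol(\mathcal{C}_0)\cap\Sigma=U$ while $\overline{\fol\,(\Lambda)}\cap\Sigma=\Lambda$, the lemma follows. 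If you want to complete your argument along your own lines, you must replace the boundary-behaviour step by some statement of this kind --- proper discontinuity of $\Gamma$ on the image of $y$, or equivalently Hausdorffness of the quotient of that image --- which is where the special nature of $\mathcal{C}$ (as opposed to an arbitrary section) actually enters.
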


\begin{proof}
For each $i \in \{1, \ldots ,k\}$, let $b_i$ be the (unique) intersection point between the invariant fiber $C_i$
and the rational curve $\mathcal{C}$ and set $\mathcal{C}_0=\mathcal{C}\setminus \{b_1, \ldots ,b_k\}$. Let
$\Sigma$ be a non-invariant fiber of $F_1$ (hence transverse to $\fol$) and denote by $q$ the intersection point
between $\Sigma$ and $\mathcal{C}_0$. Consider then the holonomy representation $\rho : \pi_1(\mathcal{C}_0) \rightarrow \psl$
and let $\Gamma=\rho(\pi_1(\mathcal{C}_0))$ be the global holonomy group of the foliation $\fol$ acting on $\Sigma$.
Let $\overline{\mathcal{C}}_0$ be the covering space of $\mathcal{C}_0$ such that
$\pi_1(\overline{\mathcal{C}}_0)={\rm ker}\,(\rho)$ and denote by $\calp : \overline{\mathcal{C}}_0 \rightarrow \mathcal{C}_0$
the corresponding covering projection.

Fix a base point $\overline{p}$ in $\overline{\mathcal{C}}_0$ and let $p=\calp(\overline{p}) \in \mathcal{C}_0$.
Let $\gamma:[0,1] \rightarrow \mathcal{C}_0$ be a path joining $p$ to $q$ (i.e., $\gamma(0)=p$ and $\gamma(1)=q$), then
$\gamma$ can be lifted to the leaf $L_p$ of the foliation $\fol$ through the point $p$ thanks to the fact that
the foliation $\fol$ is a Riccati foliation. The lift $\overline{\gamma}:[0,1] \rightarrow L_p$ joins $p$
to a point, denoted by $x = \mathcal{D} (\overline{p})$, in the intersection $L_p \cap \Sigma$.

The correspondence between the point $\overline{p} \in \overline{\mathcal{C}}_0$ and the
point $x = \mathcal{D} (\overline{p}) \in \Sigma$ can naturally be extended over paths in $\overline{\mathcal{C}}_0$.
To do so, consider a path $\overline{\sigma} : [0,1] \rightarrow
\overline{\mathcal{C}}_0$ such that $\overline{\sigma} (0) = \overline{p}$. To define
$\mathcal{D} (\overline{\sigma} (1))$, denote by $\sigma : [0,1] \rightarrow \mathcal{C}_0$ the projection of $\sigma$
onto $\mathcal{C}_0$, $\mathcal{D} (\overline{\sigma} (1))$ can then be defined to be the terminal point of the lift of
the path $\sigma^{-1} \ast \gamma$ in $L_{\sigma(1)}$. Let $\overline{\sigma}_1 : [0,1] \rightarrow \overline{\mathcal{C}}_0$
be a deformation of $\overline{\sigma}$ with fixed endpoints, then it is easy to check that $\mathcal{D} (\overline{\sigma} (1))=
\mathcal{D} (\overline{\sigma}_1 (1))$. Thus, $\mathcal{D}$, in fact, only depends on the homotopic class
of $\overline{\sigma}$.

We need then to show that $\mathcal{D}$ is well defined. Let $\overline{c}$ be a closed loop in $\overline{\mathcal{C}}_0$
with a base point at $\overline{p}$. Then $c=\calp(\overline{c})$ is a closed loop in $\mathcal{C}_0$ with a base point at $p$. The
path $c^{-1}$ can be lifted to $L_p$. Thus, the lift $c_0:[0,1] \rightarrow L_p$ joins $p$ to a point, denoted by $p_0$, in the
intersection $L_p \cap \Sigma$. Meanwhile, $p_0=\rho(c^{-1}) \cdot p= {\rm id} \cdot p=p$ since
$c^{-1} \in \pi_1(\overline{\mathcal{C}}_0)={\rm ker}\,(\rho)$. It follows that the terminal point of the lift of the path
$c^{-1} \ast \gamma$ in the leaf $L_p$ is, again, the point $x$ itself and, hence, the function $\mathcal{D}$ is well defined.

Let $G$ be the group of deck transformations acting on $\overline{\mathcal{C}}_0$. As an abstract group $G$ is given as the
quotient $$\pi_1(\mathcal{C}_0) / \pi_1(\overline{\mathcal{C}}_0)=\pi_1(\mathcal{C}_0) / {\rm ker}\,(\rho).$$ On the other hand,
the holonomy representation yields $$\pi_1(\mathcal{C}_0) / {\rm ker}\,(\rho)\simeq \rho(\pi_1(\mathcal{C}_0))=\Gamma,$$ where
$\Gamma$ is the global holonomy group of the foliation $\fol$. Thus, these two groups $G$ and $\Gamma$ are isomorphic. In other
words, $G$ and $\Gamma$, viewed as transformation groups of $\overline{\mathcal{C}}_0$ and $\Sigma$ respectively, can be interpreted
as two different actions of a same group, namely $\pi_1(\mathcal{C}_0) / {\rm ker}\,(\rho)$. Let $g$ be an element in $G$ and $\alpha$
the corresponding element in $\Gamma$. It follows from the construction above that
$$
\rho(\alpha) \cdot \mathcal{D}(\overline{r})=\mathcal{D}(g \cdot \overline{r}),
$$
for every $\overline{r} \in \overline{\mathcal{C}}_0$. In other words, the function $\mathcal{D}$ is {\it equivariant} with respect
to the actions of the groups $G$ and $\Gamma$.

Let $U = \mathcal{D}(\overline{\mathcal{C}}_0) \subset \Sigma$. Then the set $U$ is open (since the function $\mathcal{D}$ is open
by construction) and invariant under the action of the group $\Gamma$. There follows that $U / \Gamma \simeq \overline{\mathcal{C}}_0 / G$
and, since $\overline{\mathcal{C}}_0 / G$ is nothing but $\mathcal{C}\setminus \{b_1, \ldots ,b_k\}$, $U / \Gamma$ is a Hausdorff manifold.
Hence, the group $\Gamma$ acts properly discontinuously on $U$ which, in turn, implies that $U$ is entirely contained in the domain
of discontinuity of the quasifuchsian group $\Gamma$. Thus, $U \cap \Lambda = \phi$ where $\Lambda$ is the limit set of the
group $\Gamma$. The lemma then follows since $\fol(\mathcal{C}_0) \cap \Sigma =U$ and $\overline{\fol \, (\Lambda)} \cap \Sigma =\Lambda$.
\end{proof}

\begin{proof}[Proof of Theorem~\ref{a&b}]
The statement follows at once from the combination of Propositions~\ref{LeviFlat} and~\ref{LeviFlatII}
with Lemma~\ref{RationalCurve}. In fact, in the case of Fuchsian groups, the Levi-flat $\mathcal{L} \subset \CP^2$
is nothing but the image of the (Levi-flat) $\mathcal{L} (\fol) \subset F_1$
whose existence is ensured by Proposition~\ref{LeviFlat} through the blow down mapping from $F_1$ to $\CP^2$.
To show that the resulting set $\mathcal{L}$ satisfy the conditions of Theorem~\ref{a&b} just observe
that $\mathcal{L} (\fol)$ is contained in a compact part of $F_1 \setminus \mathcal{C}$ thanks to
Lemma~\ref{RationalCurve}, where $\mathcal{C}$ stands for the $(-1)$-rational curve in $F_1$. It follows that the
mentioned blow down mapping is actually a holomorphic diffeomorphism on a neighborhood
of $\mathcal{L} (\fol)$ so that the theorem follows. The same argument applies to the case of quasifuchsien groups
as discussed in Proposition~\ref{LeviFlatII}. Theorem~\ref{a&b} is proved.
\end{proof}

\section{Foliated harmonic currents}\label{buldingharmoniccurrents-I}
Having essentially proved the first two statements of Theorem~A in section~(\ref{provingTheoremA_Part_I}),
the present section is devoted to establishing statements~(c), (d), and~(e) in the theorem in question.
In other words, we will describe all foliated
harmonic currents for Riccati foliations on $\CP^2$ satisfying the conditions of Theorem~A.

To avoid useless repetitions, in the course of this section we shall consider that a Fuchsian group is a special
case of a quasifuchsian one. In other words, when referring to quasifuchsian groups in the sequel, the possibility
that the group is question is actually Fuchsian {\it is not}\, ruled out.
Along similar lines, we will denote by $\mathcal{L} (\fol) \subset F_1$ the Levi-flat arising
from the Riccati foliation considered in Propositions~\ref{LeviFlat} and~\ref{LeviFlatII} regardless of whether
$\mathcal{L} (\fol)$ is real-analytic or transversely fractal. Of course, every quasifuchsien/fuchsian group considered
in the sequel is assumed to be non-elementary. The notation used in the proofs
of Propositions~\ref{LeviFlat} and~\ref{LeviFlatII} will be resumed in the sequel.

Consider an invariant fiber $C_i$ of $\fol$ along with the singular points $p_i, q_i \in C_i$, where $p_i$ lies
in the Poincar\'e domain and $q_i$ in the Siegel domain. Denote by $m_i \geq 2$ the order of the (elliptic) holonomy
map of $\fol$ arising from winding around $C_i$. As previously seen, $\fol$ is linearisable around $p_i$.
Thus, there are local coordinates $(u_i,v_i)$ where $\fol$ is locally conjugate to the foliation associated with
the vector field
$$
m_i u_i \frac{\partial}{\partial u_i} + n_i v_i \frac{\partial}{\partial v_i} \, ,
$$
with $p_i \simeq (0,0)$ and $\{ u_i =0\} \subset C_i$. Moreover, $m_i$ and $n_i$ are strictly positive integers
which can be assumed to satisfy $1 \leq n_i < m_i$ without loss of generality. In particular, the separatrix
given by $\{ v_i =0\}$ is distinguished as the unique (local, smooth) separatrix that is transverse to
the fiber $C_i$. Apart from the separatrix induced by $C_i$, $\{ v_i =0\}$ is the unique separatrix
carrying non-trivial holonomy which is conjugate to the rotation of angle $2\pi n_i/m_i$. This separatrix
is denoted by $S_p$.

In the $(u_i,v_i)$-coordinates, $\fol$ admits the function $(u_i,v_i) \mapsto u_i^{n_i} v_i^{-m_i}$ as a
meromorphic first integral so that the leaves are locally algebraic of the form $u_i^{n_i} = c v_i^{m_i}$
for a suitable constant $c \in \C$. Yet a blow up procedure allows eliminate the indeterminacy point of the
mentioned first integral so as to describe the behavior of these
leaves in a way better adapted to our needs. In fact, it is useful carry out this procedure relying on the
following elementary observation: the blow up of the foliation in question yields a foliation leaving
invariant the resulting exceptional divisor and possessing exactly $2$ singular points. Furthermore, both
singular points have integer eigenvalues and one of them lies in the Poincar\'e domain while the other
lies in the Siegel domain. The procedure then amounts to carrying on blowing up the singularity lying in the
Poincar\'e domain until we reach a singular point whose eigenvalues are $1$ and $1$. Then one last
blow up at this singular point will lead to a foliation that is transverse to the corresponding component of
the exceptional divisor. In other words,
after finitely many blow ups - all centered at singular points lying in the Poincar\'e domain - we obtain
picture:
\begin{itemize}
	\item[(1)] An exceptional divisor $\mathcal{E}$ consisting of a string of rational curves $D_1, \ldots , D_l$
	with transverse intersections.
	
	\item[(2)] A foliation $\tilf$ that possesses only singularities lying in the Siegel domain.
	
	\item[(3)] A component $D_{J}$ of  $\mathcal{E}$ which is transverse to $\tilf$ (in particular in
	a neighborhood of $D_J$).
	
	\item[(4)] The remaining components $D_1, \ldots , D_{J-1}, D_{J+1} ,\ldots ,D_l$ are all invariant by
	$\tilf$.
	
	\item[(5)] $D_1$ intersects (the transform of) $C_i$ while (the transform of) $S_p$ determines a singular
	point of $\tilf$ lying in $D_l$.
\end{itemize}

We are now ready to prove Lemma~\ref{addedlemmaanalytic} and then proceed to the construction of the
current $T$ supported on $\mathcal{L} (\fol)$.

\begin{proof}[Proof of Lemma~\ref{addedlemmaanalytic}]
We consider a fibered neighborhood $U = \D \times C_i$ of $C_i$ where $\D \subset \C$. Fix a
fiber $\Sigma \subset U$ and let $\sigma : \Sigma \rightarrow \Sigma$ denote the (elliptic) holonomy
map arising from winding around $C_i$. We choose projective coordinates on $\Sigma$ such that
$\mathcal{L} (\fol) \cap \Sigma = \R \cup \{ \infty\}$. To prove the lemma, it suffices to construct a
meromorphic first integral $F$ for $\fol$ on $U$ such that $\mathcal{L} (\fol) \cap U$ coincides
with $F^{-1} (\R \cup \{ \infty\})$. Indeed, if $F$ is such an integral, then $F = \overline{F}$ yields an analytic
equation defining $\mathcal{L} (\fol) \cap U$.

Denote by $\xi : \CP^1 \rightarrow \CP^1$ is an elliptic automorphism of order~$m_i$ leaving $\R \cup \{ \infty\}$
invariant. Consider then a non-constant holomorphic map $f : \CP^1 \rightarrow \CP^1$ which is
invariant under $\xi$. For example, if we change coordinates such that $\xi$ becomes a rotation around
the origin and where $\R \cup \{ \infty\}$ becomes the unit circle, then we can choose $f(z) = z^n$. Moving back to the initial
coordinates then yields the desired invariant function.

Denote by $P_i, Q_i$ the two fixed points of $\xi$. Recall that $S_p$ (resp. $S_q$) is the separatrix issued from the singular
point $p_i$ (resp. $q_i$) which is transverse to $C_i$. As seen in Section~\ref{provingTheoremA_Part_I},
$S_p$ meets $\Sigma$ at the fixed point $P_i$. Similarly $S_q$ meets $\Sigma$ at $Q_i$.

Next apply to the singular point $p_i$ the blow up procedure described above.
We then denote by $\widetilde{U}$ the transform of $U$. Also we denote by $\mathcal{E}_1$ the divisor consisting
of the string of rational curves going from $C_i$ to $D_{J-1}$. Similarly, $\mathcal{E}_2$ is the string of
rational curves $D_{J+1}, \ldots , D_l$.

To complete the proof of the lemma, we will construct a holomorphic mapping
$\widetilde{F}: \widetilde{U} \rightarrow \CP^1$ which is constant over the leaves of $\tilf$ (the transform
of $\fol$). Clearly, such a map induces the desired meromorphic first integral $F$ on $U$ (Levi extension). In turn, to construct the
mapping $\widetilde{F} : \widetilde{U} \rightarrow \CP^1$ constant over the leaves of $\tilf$,
we first set $\widetilde{F} (z) = f(Q_i)$ if $z \in \mathcal{E}_1$ and
$\widetilde{F} (z) = f(P_i)$ if $z \in \mathcal{E}_2$. Now, if $z \in \widetilde{U} \setminus (\mathcal{E}_1 \cup \mathcal{E}_2)$,
then we consider the leaf $L_z$ of $\tilf \simeq \fol$ through $z$ and set $\widetilde{F}(z) = f( L_z \cap \Sigma)$.
It is immediate that $\widetilde{F}$ is well defined since all the value of $f$ at every intersection
point $L_z \cap \Sigma$ is the same ($f$ is invariant under $\xi$). Furthermore, since $\tilf$ is regular
away from $\mathcal{E}_1 \cup \mathcal{E}_2$, it is clear that $\widetilde{F}$ is holomorphic
on $\widetilde{U} \setminus (\mathcal{E}_1 \cup \mathcal{E}_2)$. Finally, to show that $\widetilde{F}$
is holomorphic on all of $\widetilde{U}$, it suffices to show that this mapping is continuous 
at $\mathcal{E}_1$ and at $\mathcal{E}_2$.

The continuity of $\widetilde{F}$ at points in $\mathcal{E}_1 \cup \mathcal{E}_2$, however, follows
from the structure of (linear) Siegel singular points detailed in Section~\ref{provingTheoremA_Part_I}.
In fact, let $\{ z_i\} \subset \widetilde{U} \setminus (\mathcal{E}_1 \cup \mathcal{E}_2)$ be
a sequence of points converging towards a point in, say, $\mathcal{E_1}$. Since all singular points
of $\tilf$ lie in the Siegel domain, it follows that the corresponding leaves $L_{z_i}$ accumulate on the
separatrix $S_q$. Thus, the intersection points $L_{z_i} \cap \Sigma$ cluster around $Q_i$ so that
$\widetilde{F} (z_i)$ converges to $f(Q_i)$ hence establishing the continuity of $\widetilde{F}$ at
points in $\mathcal{E}_1$. The analogous argument shows that $\widetilde{F}$ is also continuous at
points in $\mathcal{E}_2$ so that Riemann extension implies that $\widetilde{F}$ is holomorphic on all
of $\widetilde{U}$. The proof of the lemma is completed.
\end{proof}

The remainder of the section is devoted to constructing the harmonic current $T$ supported on 
$\mathcal{L} (\fol)$ in order to complete the proof of Theorem~A. Let us then go back to the
Riccati foliation $\fol$ on $F_1$ as in Proposition~\ref{LeviFlatII}.
The global holonomy group $\Gamma$ of $\fol$ is a quasifuchsian group of ${\rm PSL}\, (2,\C )$ (the Fuchsian
case viewed as a particular one). Furthermore, since
$\Gamma$ is assumed to be of first kind so that its limit set is all of the invariant Jordan curve.
The Levi-flat $\mathcal{L} (\fol) \subset F_1$ obtained as the closure of the saturated of this Jordan
curve by the foliation $\fol$
is a singular topological manifold of dimension~$3$ whose singular points coincide with the singular points
of $\fol$ belonging to the Poincar\'e domain. Clearly, every leaf of $\fol$ contained in $\mathcal{L} (\fol)$
is dense in $\mathcal{L} (\fol)$ since $\Gamma$ is of first kind.

Now for each $i \in \{1, \ldots ,k\}$, we apply the blowing up procedure used in the proof of
Lemma~\ref{addedlemmaanalytic} to each of the singular points $p_i \in C_i$ lying in the Poincar\'e domain.
Denote by $N$ the resulting surface and by $\tilf$ the transform of $\fol$. Note that the corresponding
projection $\Pi : N \rightarrow F_1$
is a holomorphic diffeomorphism from the complement of the total exceptional divisor in $N$ to
$F_1 \setminus \{ p_1, \ldots, p_k\}$. Then set $\mathcal{R}_i = C_i \cup \Pi^{-1} (p_i)$ where $C_i$ is
identified with its transform. Clearly, $\mathcal{R}_i$ is a string of rational curves containing a unique
curve $D_{J_i}$ which is transverse to $\tilf$ while all the other components are invariant by $\tilf$.
In the sequel, let $\widetilde{\mathcal{L} (\fol)}$ be the transform of the Levi-flat $\mathcal{L} (\fol)$.

\begin{lemma}
\label{smoothLeviflat}
$\widetilde{\mathcal{L} (\fol)}$ is a regular topological manifold. It intersects the divisor
$\mathcal{R}_i$ on a Jordan curve contained in the curve $D_{J_i}$.
\end{lemma}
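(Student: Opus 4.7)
The plan is to use the resolution data (1)--(5) to replace the Poincar\'e singularity $p_i$ by the transverse component $D_{J_i}$ and to describe $\widetilde{\mathcal{L}(\fol)}\cap\mathcal{R}_i$ as the image of the Jordan curve $\mathcal{J}=\Lambda\cap\Sigma$ under a natural degree-$m_i$ holomorphic map to $D_{J_i}$. Fix a generic fiber $\Sigma$ near $C_i$ carrying the local holonomy $\sigma$, which by Lemma~\ref{Limitsetaccumulation} is an elliptic element of $\psl$ of order exactly $m_i$ (so in particular $\gcd(n_i,m_i)=1$). The two fixed points $P_i,Q_i\in\Sigma$ of $\sigma$ are the intersections of $\Sigma$ with the transverse separatrices $S_p,S_q$ of $\fol$ at $p_i,q_i$, and a fuchsian/quasifuchsian group of first kind has no elliptic fixed point in its limit set, so $\{P_i,Q_i\}\cap\mathcal{J}=\emptyset$. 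The local first integral $F=u^{n_i}/v^{m_i}$ of $\fol$ at $p_i$ extends, by construction of the resolution $N$, to a holomorphic map $\widetilde{F}$ from a neighborhood of $\Pi^{-1}(p_i)$ onto $\CP^1$ whose regular fibers are the leaves of $\tilf$ and which sends $D_{J_i}$ biholomorphically onto $\CP^1$. Restricting $\widetilde{F}$ to $\Sigma$ yields a degree-$m_i$ holomorphic map $\Phi:\Sigma\to D_{J_i}$, ramified exactly at $P_i,Q_i$, whose generic fibers are precisely the orbits of $\langle\sigma\rangle$.

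Since $\sigma$ acts freely on $\mathcal{J}$ by an orientation-preserving periodic homeomorphism, $\Phi(\mathcal{J})=\mathcal{J}/\langle\sigma\rangle$ is again a topological circle, hence a Jordan curve $\mathcal{J}'\subset D_{J_i}$. Moreover, $\Phi(P_i)$ and $\Phi(Q_i)$ are the two points of $D_{J_i}$ where this component meets its two neighbors in $\mathcal{R}_i$, and at these points $\tilf$ has its only two singularities lying on $D_{J_i}$ (both of Siegel type, by point~(2) of the resolution). Hence $\mathcal{J}'$ is bounded away from $\mathrm{Sing}\,(\tilf)\cap D_{J_i}$.

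I would then verify $\widetilde{\mathcal{L}(\fol)}\cap\mathcal{R}_i=\mathcal{J}'$. A leaf $L\subset\mathcal{L}(\fol)\setminus\{p_i\}$ meeting a small neighborhood of $p_i$ is a level set of $F$, so after the blow-ups its transform closes up to a complex disk meeting $D_{J_i}$ transversely at the single point $\widetilde{F}^{-1}(c)\cap D_{J_i}$ (where $c$ is the corresponding value of $F$). Thus every such leaf contributes one point of $\mathcal{J}'$ to $\widetilde{\mathcal{L}(\fol)}\cap D_{J_i}$, and conversely every point of $\mathcal{J}'$ lies on the transform of a leaf of $\fol$ through $\mathcal{J}$; this gives $\widetilde{\mathcal{L}(\fol)}\cap D_{J_i}=\mathcal{J}'$. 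To see that $\widetilde{\mathcal{L}(\fol)}$ does not meet the other (invariant) components $D_j$, $j\neq J_i$, note that each such $D_j$ is itself a leaf of $\tilf$ whose only non-regular points are the two Siegel singularities at the adjacent intersections; a non-invariant leaf cannot accumulate at a regular point of $D_j$ (since $D_j$ is itself a leaf), and it cannot accumulate at the Siegel intersections either, because in linearising coordinates the stable/unstable separatrices at such a Siegel point are the two adjacent invariant components themselves, and nearby non-invariant leaves exit the chart transversely across $D_{J_i}$. Granting this identification, the topological manifold structure is immediate: away from $\mathcal{R}_i$, the projection $\Pi$ is a biholomorphism and Propositions~\ref{LeviFlat}--\ref{LeviFlatII} provide the regularity; along the regular part of $D_{J_i}$ containing $\mathcal{J}'$, $\tilf$ is locally trivial and transverse to $D_{J_i}$, so in local coordinates $(x,y)$ with $D_{J_i}=\{y=0\}$ and leaves $\{x=\mathrm{const}\}$, the Levi-flat takes the product form $(\mathcal{J}'\cap\{y=0\})\times\D$, a $3$-dimensional topological manifold.

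The main obstacle I anticipate is the accumulation argument of the third step, namely carefully excluding any clustering of $\widetilde{\mathcal{L}(\fol)}$ on the invariant components $D_j$ with $j\neq J_i$. Although this is dynamically very natural, writing it out cleanly requires propagating the linearisation of each Siegel singularity along the resolution chain and observing that the only possible accumulation loci inside the blown-up region would be the Siegel singularities on $D_{J_i}$, which correspond under $\Phi$ to $P_i,Q_i$, precisely the points excluded from $\mathcal{J}$ by Lemma~\ref{Limitsetaccumulation}.
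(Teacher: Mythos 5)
Your proof is correct and follows essentially the same route as the paper's: both arguments rest on the facts that the elliptic fixed points $P_i,Q_i$ lie outside the limit set, so the transform of $\mathcal{L}(\fol)$ avoids every invariant component of $\mathcal{R}_i$ and meets only a compact part of the transverse component $D_{J_i}$ away from its corner points, where the transversality of $\tilf$ to $D_{J_i}$ gives the local product (hence regular manifold) structure. One small correction: $\tilf$ has no singular points on $D_{J_i}$ at all --- it is regular at the two corners $D_{J_i}\cap D_{J_i\pm1}$, the adjacent invariant components being the local leaves there --- but this does not affect your argument, since $\mathcal{J}'$ avoids the corners simply because $P_i,Q_i\notin\mathcal{J}$ and these are the only preimages of the corners under your map $\Phi$.
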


\begin{proof}
With the notation of Lemma~\ref{addedlemmaanalytic}, $\mathcal{R}_i = C_i \cup \mathcal{E}_i$ and each
of the connected components of $\mathcal{R}_i \setminus D_{J_i}$ are contained in saturated sets of
arbitrarily small neighborhoods of the local monodromy. Therefore, they remain away from the Jordan curve
arising as limit set of $\Gamma$. Hence, $\widetilde{\mathcal{L} (\fol)} \cap \mathcal{R}_i$ is contained
in a compact part of $D_{J_i}$ minus the intersection points with $D_{J_i -1}$ and $D_{J_i +1}$. In turn,
the foliation $\tilf$ is regular at these intersection points and the lemma follows.
\end{proof}

We can easily compare the leaves of $\tilf$ in $\widetilde{\mathcal{L} (\fol)}$ with the leaves of
$\fol$ in $\mathcal{L} (\fol)$. In fact, it might be more accurate to talk about the {\it filled leaves}
of the foliation $\fol$ which are defined as follows. First, a {\it leaf}\, of $\fol$ in $\mathcal{L} (\fol)$ is
nothing but a leaf of the non-singular foliation obtained by restricting $\fol$ to $F_1 \setminus \{ p_1, \ldots, p_k\}$.
If $L$ is one such leaf, then the corresponding {\it filled leaf}\, $\overline{L}$ is
defined by adding the singular point $p_i$ to every local branch of the leaf $L$ passing through $p_i$,
where each branch has the local form $u_i^{n_i} = c v_i^{m_i}$ for suitable $c \in \C$ and positive integers $m_i,n_i$.
Now the comparison between leaves of $\tilf$ in $\widetilde{\mathcal{L} (\fol)}$ and leaves of
$\fol$ in $\mathcal{L} (\fol)$ is made accurate by the following lemma.

\begin{lemma}
	\label{projecting_leaves-filledversion}
	If $\widetilde{L}$ is a leaf of the foliation $\tilf$ contained in $\widetilde{\mathcal{L} (\fol)}$
	then the restriction of $\Pi$ to $\widetilde{L}$ is a diffeomorphism between $\widetilde{L}$ and some filled leaf
	$\overline{L}$ of the foliation $\fol$.
\end{lemma}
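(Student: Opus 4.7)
The plan is to exploit the fact that $\Pi \colon N \to F_1$ is a biholomorphism away from the total exceptional divisor $\mathcal{E} = \bigcup_{i} \mathcal{R}_i$, so that the heart of the argument reduces to analyzing what happens at the finitely many (locally) points where $\widetilde{L}$ meets $\mathcal{E}$. First, invoking Lemma~\ref{smoothLeviflat}, the intersection $\widetilde{L} \cap \mathcal{R}_i$ is contained in $D_{J_i}$, away from the nodes of $\mathcal{R}_i$. Since $D_{J_i}$ is transverse to $\tilf$, the leaf $\widetilde{L}$ meets $D_{J_i}$ only at a discrete set of points $\{x_i^{(1)}, x_i^{(2)}, \ldots\}$. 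Setting $L^{\circ} = \widetilde{L} \setminus \bigcup_{i,j} \{x_i^{(j)}\}$, the restriction $\Pi|_{L^{\circ}}$ is automatically a biholomorphism onto a leaf $L$ of $\fol$ in $F_1 \setminus \{p_1, \ldots, p_k\}$, since $\Pi$ is already a biholomorphism on $N \setminus \mathcal{E}$. Hence the only content of the lemma is to show that $\Pi|_{\widetilde{L}}$ extends through the exceptional punctures into a diffeomorphism onto the corresponding filled leaf $\ooL$.

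Next I would analyse a single intersection point $x = x_i^{(j)} \in \widetilde{L} \cap D_{J_i}$. In the linearising coordinates $(u_i,v_i)$ around $p_i$, where $\fol$ is generated by $m_i u_i \frac{\partial}{\partial u_i} + n_i v_i \frac{\partial}{\partial v_i}$, the leaves of $\fol$ are the Puiseux-type curves $u_i^{n_i} = c\, v_i^{m_i}$. The blow up procedure carried out at $p_i$ (the one described just before Lemma~\ref{addedlemmaanalytic}) is precisely designed to resolve these branches: the transverse component $D_{J_i}$ of the resulting chain parametrises them, each point of the relevant part of $D_{J_i}$ corresponding to exactly one local branch at $p_i$. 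Since $D_{J_i}$ is transverse to $\tilf$ near $x$, a small neighbourhood $U \subset \widetilde{L}$ of $x$ is a smooth disc cutting $D_{J_i}$ transversely, and $\Pi|_U$ realises $U$ as the Puiseux normalisation of the unique local branch of $L$ at $p_i$ associated with $x$.

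To conclude, recall that the filled leaf $\ooL$ is, by construction, obtained from $L$ by adjoining one copy of $p_i$ for each local branch of $L$ through $p_i$, with the natural Riemann surface structure coming from the Puiseux parametrisations of these branches. The local analysis above matches the intersection points $\{x_i^{(j)}\}_j$ bijectively with the local branches of $L$ at $p_i$, and near each $x_i^{(j)}$ the map $\Pi|_{\widetilde{L}}$ is exactly this Puiseux parametrisation, hence a biholomorphism onto a neighbourhood of the added point in $\ooL$. Glueing with the biholomorphism $\Pi|_{L^{\circ}} \colon L^{\circ} \to L$ delivers the desired global diffeomorphism $\Pi|_{\widetilde{L}} \colon \widetilde{L} \to \ooL$.

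The main obstacle lies in the bookkeeping underlying the bijection between $\widetilde{L} \cap D_{J_i}$ and the local branches of $L$ at $p_i$; in particular, one must verify that no two distinct intersection points get mapped to the same branch and that every branch of the germ of $L$ at $p_i$ arises in this way. This is where the specific structure of the blow up chain of rational curves resolving a resonant Poincar\'e singularity is crucial: the transverse component $D_{J_i}$ is produced precisely when the successive blow ups have made the foliation transverse to the last exceptional component, and a direct inspection of the standard model shows that this component parametrises exactly the distinct Puiseux branches at $p_i$. Once this is in place, the diffeomorphism assertion follows from the local form of a biholomorphism composed with a resolution of a plane curve singularity, with no further analytic input needed.
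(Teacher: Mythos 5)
Your proof is correct and follows essentially the same route as the paper's: both arguments reduce everything to the fact that $\widetilde{L}$ meets the exceptional divisor only in the components $D_{J_i}$, which are compact and transverse to $\tilf$, so that $\Pi$ is a biholomorphism off a set of punctures that get resolved into the branch points of the filled leaf. The paper's own proof is considerably terser (it invokes uniform transversality and declares the lemma immediate), whereas you spell out the branch-by-branch Puiseux bookkeeping via the local model $\Pi(t_i,s_i)=(t_i^{m_i}s_i,\,t_i^{n_i})$ — a legitimate and welcome expansion of the same idea rather than a different argument.
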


\begin{proof}
	In view of the preceding, the intersection of a leaf $\widetilde{L}$ with the exceptional divisor of $\Pi$
	is contained in the union of the curves $D_{J_i}$. These are compact (rational) curves transverse to
	$\tilf$. Therefore, the intersection of $\widetilde{L}$ and the exceptional divisor of $\Pi$
	is uniformly transverse (i.e., transverse with angle bounded from below by a strictly positive constant).
	The lemma follows immediately.	
\end{proof}

Next, we have:

\begin{lemma}
	\label{filledleavesarehyperbolic}
	The filled leaves $\overline{L}$ of $\fol$ in $\mathcal{L} (\fol)$ are hyperbolic Riemann surfaces.
\end{lemma}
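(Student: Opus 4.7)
The plan is to identify each filled leaf $\overline{L}$ with an unramified orbifold cover of a hyperbolic orbifold structure on $\CP^1$, which will force $\overline{L}$ to be a quotient of the Poincar\'e disc $\D$.

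The first task will be to describe the holomorphic map $\calp|_{\overline{L}}\colon \overline{L}\to \CP^1$ obtained by restricting the Riccati projection to the filled leaf. Thanks to Lemma~\ref{projecting_leaves-filledversion}, $\overline{L}$ is biholomorphic to the leaf $\widetilde L\subset \widetilde{\mathcal L(\fol)}$ of $\tilf$ and is therefore a smooth Riemann surface. Off the added points, $\calp|_{\overline{L}}$ coincides with the restriction of $\calp$ to $L$, which is an unbranched covering of $B=\CP^1 \setminus \{p_1,\ldots,p_k\}$ by virtue of the Riccati structure. At each added point I would use the linearisation from the beginning of the section: after rescaling the local representative of $\fol$ so that $\gcd(m_i, n_i)=1$ (in which case the order of the local elliptic holonomy $\xi_i$ around $C_i$ equals $m_i$), the corresponding branch $u_i^{n_i} = c\, v_i^{m_i}$ admits the holomorphic parametrisation $\zeta \mapsto (\zeta^{m_i}, c' \zeta^{n_i})$ sending $\zeta = 0$ to the added point. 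In these coordinates $\calp$ reads $\zeta \mapsto \zeta^{m_i}$, so $\calp|_{\overline{L}}$ has local degree exactly $m_i$ at every added point.

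Next I would equip $\CP^1$ with the orbifold structure $\mathcal{O}$ in which each $p_i$ carries orbifold order $m_i$, so that the computation above displays $\calp|_{\overline{L}}\colon \overline{L}\to \mathcal{O}$ as an orbifold covering, with $\overline{L}$ itself inheriting no orbifold points because the local degree at each preimage of $p_i$ matches the orbifold order. The orbifold fundamental group $\pi_1^{\mathrm{orb}}(\mathcal{O})$ admits the presentation $\langle \gamma_1,\ldots,\gamma_k \mid \gamma_i^{m_i}=1,\; \gamma_1\cdots\gamma_k=1\rangle$ and is naturally identified with $\Gamma$ via the holonomy representation; since $\Gamma$ is by hypothesis a non-elementary Fuchsian (resp. quasifuchsian) group of first kind, one obtains $\chi^{\mathrm{orb}}(\mathcal{O}) = 2-\sum_{i=1}^{k}(1-1/m_i)<0$, so that $\mathcal{O}$ is a hyperbolic orbifold whose universal orbifold cover is $\D$. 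Any orbifold cover of $\mathcal{O}$ by a smooth Riemann surface is a quotient of $\D$ by a torsion-free subgroup acting freely by hyperbolic isometries, which is precisely the statement that $\overline{L}$ is a hyperbolic Riemann surface.

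The hard part will be the careful matching of local degrees in the second paragraph. This relies on the linearisability of $\fol$ around $p_i$ established in Lemma~\ref{Twosingularpoints} and on the fact that no non-trivial elliptic element of $\Gamma$ fixes a point of the invariant Jordan curve, so that each local orbit of $\langle \xi_i \rangle$ passing through the leaf has maximal length $m_i$; this is what will guarantee that $\calp|_{\overline{L}}$ is unramified in the orbifold sense rather than merely a branched cover of the underlying space $\CP^1$.
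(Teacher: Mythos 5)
Your proof is correct, but it takes a genuinely different route from the paper. The paper's argument is a volume-growth argument: a parabolic leaf of a lamination in a compact surface has subexponential area growth, whereas away from the points $p_i$ the filled leaf is transverse to the fibration and its growth is governed by the exponential growth of the non-elementary group $\Gamma$; Lemma~\ref{projecting_leaves-filledversion} is invoked only to check that the local branches through the $p_i$ contribute uniformly bounded area and diameter, so the exponential lower bound survives the filling. Your route instead exhibits $\overline{L}$ as an unramified orbifold covering of $\CP^1$ with orbifold order $m_i$ at $p_i$ and concludes by orbifold uniformization. The delicate point you correctly isolate --- that every component of $\calp|_{\overline{L}}^{-1}$ of a small disc about $p_i$ maps with local degree exactly $m_i$ --- rests on two facts the paper establishes elsewhere: linearisability at $p_i$ with coprime integer eigenvalues (Lemma~\ref{Twosingularpoints} and the normal form at the start of Section~\ref{buldingharmoniccurrents-I}, which give the parametrisation $\zeta \mapsto (\zeta^{m_i}, c'\zeta^{n_i})$ of each branch), and the fact that no non-trivial elliptic element of $\Gamma$ fixes a point of the limit set (already used in the proof of Lemma~\ref{Limitsetaccumulation}), which forces every local monodromy orbit on $L\cap\Sigma\subset\Lambda$ to have length exactly $m_i$. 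One small imprecision: the holonomy representation only gives a \emph{surjection} $\pi_1^{\mathrm{orb}}(\mathcal{O})\twoheadrightarrow\Gamma$, not an identification; but a spherical or Euclidean orbifold group is finite or virtually abelian and cannot surject onto a non-elementary group, so $\chi^{\mathrm{orb}}(\mathcal{O})<0$ still follows and the argument stands. Your approach is longer to make rigorous at the singular points but yields more: it realises each filled leaf explicitly as $\D/\Lambda'$ for a torsion-free subgroup of the uniformizing group of the orbifold, rather than merely certifying hyperbolicity.
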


\begin{proof}
To show that $\overline{L}$ is a hyperbolic Riemann surface it suffices to check
that its volume grows exponentially. For regular foliations everywhere transverse to a fibration, the growth type of
leaves is determined by the growth of the global holonomy group of the foliation which is exponential since
it is a quasifuchsian group. A minor difficulty here arises from the fact that $\overline{L}$ fails
to be transverse to $C_i$ at the point $p_i$.

The indicated issue is, however, settled by Lemma~\ref{projecting_leaves-filledversion}. Around each point $p_i$, we
can place a small ball $B_i$ such that the away from $B_i$ the leaf is transverse to the fibers and hence
has its volume growth comparable with the exponential growth of the holonomy group $\Gamma$. Furthermore,
owing to Lemma~\ref{projecting_leaves-filledversion}, the intersection of each local branch of $\overline{L}$
with $B_i$ is a disc of small area and small (comparable) diameter. Combining these informations, it becomes
clear that the volume of $\overline{L}$ grows exponentially so that the statement follows.
\end{proof}

Before stating the next proposition, it is convenient to recall the notion of self-intersection for
harmonic currents introduced in \cite{fornaessSibony}. Consider a positive harmonic current $T$ on a compact
K\"ahler surface. By using Hodge theory, Fornaess and Sibony managed to define the {\it self-intersection of $T$}
by showing that the integral
$$
\int T \wedge T
$$
is well defined for positive harmonic currents. Moreover, this integral coincides with the usual formulation
in the case where $T$ is smooth. In particular, in the case of $\CP^2$, they mentioned the problem of computing the
quantity
$$
\inf \left\{ \int T \wedge T \, ; \; \;   T \geq 0 \; \; {\rm and} \; \; i\partial \overline{\partial} T =0  \right\} \, .
$$
Proving that the infimum in question is strictly positive would have major implications in several well-known
conjectures about Riemann surface laminations in $\CP^2$. A by-product of their second paper \cite{fornaesssibony-2},
however, is that this infimum equals
zero. Yet, it seem that no example of positive harmonic current with zero self-intersection and having,
say, support with empty interior was previously identified so that applications to the quoted problems could still
be envisaged. In this regard, the contribution of Theorem~A is summarised by its item~(e).

The following proposition is more naturally stated in the language of {\it laminations}, see for example
\cite{fornaesssibony-survey}.

\begin{prop}\label{Currents_upstairs}
	The Riemann surface lamination defined by the restriction of $\tilf$ to $\widetilde{\mathcal{L} (\fol)}$
	possesses a unique positive foliated harmonic current $\widetilde{T}$. The current $\widetilde{T}$
	is not closed and has zero self-intersection. Finally, the support of the current $\widetilde{T}$ coincides
	with all of $\widetilde{\mathcal{L} (\fol)}$.
\end{prop}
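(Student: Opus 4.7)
My plan is to transfer the problem from the lamination $\widetilde{\mathcal{L} (\fol)}$ to the transverse dynamics of the quasifuchsian group $\Gamma$ acting on its invariant Jordan curve $\mathcal{J}$. A generic fiber $\Sigma$ of the $\CP^1$-bundle $F_1$ — or equivalently, a transverse component $D_{J_i}$ on the blown-up surface $N$ — intersects $\widetilde{\mathcal{L} (\fol)}$ precisely in a copy of $\mathcal{J}$, and by the very construction of the Riccati suspension the holonomy pseudogroup of the lamination acts on this transversal as the restriction of $\Gamma$ to $\mathcal{J}$. Using the Rokhlin disintegration already invoked in Section~\ref{ahlforscurrent_examples}, a positive foliated harmonic current $\widetilde{T}$ is equivalent to the data of a positive transverse measure $\mu$ on $\mathcal{J}$, together with a compatible choice of positive leafwise harmonic function, so that the relevant object is a harmonic measure for the $\Gamma$-action on~$\mathcal{J}$.

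To produce such a current I would start from a Patterson--Sullivan-type conformal density on $\mathcal{J}$; pairing it leaf-by-leaf with the Poisson kernel on $\D$ (the universal cover of any leaf, by Lemma~\ref{filledleavesarehyperbolic}) yields a positive, leafwise harmonic function and hence a positive foliated harmonic current $\widetilde{T}$. Uniqueness then reduces to uniqueness of the harmonic measure on $\mathcal{J}$ for the $\Gamma$-action, and this is where the assumption that $\Gamma$ is non-elementary of first kind plays an essential role: in this regime the Patterson--Sullivan measure is the unique (up to scalar) conformal density of critical exponent, and any harmonic measure for the $\Gamma$-action must coincide with it by the classical ergodicity arguments going back to Sullivan. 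I expect this uniqueness step to be the main technical obstacle, since it also requires controlling the disintegration across the singular points $p_i$; here the blown-up picture from the proof of Lemma~\ref{addedlemmaanalytic}, combined with Lemma~\ref{smoothLeviflat}, should provide the correct geometric setting.

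The remaining assertions should then follow. A closed $\widetilde{T}$ would disintegrate into a genuine $\Gamma$-invariant probability measure on $\mathcal{J}$, which cannot exist for a non-elementary quasifuchsian group of first kind, as any invariant finite measure would have to be atomic and $\Gamma$-invariant, contradicting minimality of the $\Gamma$-action on its limit set; hence $\widetilde{T}$ is not closed. The full support statement follows from the minimality of the $\Gamma$-action on $\mathcal{J}$ combined with the density of every leaf of $\tilf$ in $\widetilde{\mathcal{L} (\fol)}$. Finally, to obtain zero self-intersection in the sense of~\cite{fornaessSibony}, I would exploit the foliated nature of $\widetilde{T}$: distinct leaves contained in the $3$-real-dimensional manifold $\widetilde{\mathcal{L} (\fol)}$ are pairwise disjoint, so the geometric self-intersection pairing, viewed through the disintegration of $\widetilde{T}$, is only fed by coincident leaves and therefore vanishes.
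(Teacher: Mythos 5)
Your argument for non-closedness and for full support matches the paper's: a closed foliated current would disintegrate into a finite $\Gamma$-invariant measure on the limit set, impossible for a non-elementary group, and density of all leaves forces the support to be everything. However, the two central claims of the proposition are handled quite differently in the paper, and your versions contain genuine gaps.

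First, existence and uniqueness. The paper does not construct $\widetilde{T}$ by hand: after observing (Lemmas~\ref{smoothLeviflat} and~\ref{filledleavesarehyperbolic}) that the restriction of $\tilf$ to $\widetilde{\mathcal{L} (\fol)}$ is a \emph{regular} lamination with hyperbolic leaves carrying no positive foliated closed current, it invokes the unique ergodicity theorem of Dinh--Nguyen--Sibony \cite{sibony_uniqueness}, which delivers existence, uniqueness \emph{and} the vanishing of the self-intersection in one stroke. Your replacement of this step by a Patterson--Sullivan density paired with the leafwise Poisson kernel conflates two different transverse objects. The disintegration of a foliated harmonic current on a suspension-type lamination produces a \emph{stationary} measure for the leafwise Brownian motion (equivalently, a Garnett harmonic measure), not a conformal density; for non-elementary Kleinian groups the harmonic measure class and the Patterson--Sullivan class are in general mutually singular, so the assertion that ``any harmonic measure for the $\Gamma$-action must coincide with the Patterson--Sullivan measure'' is false as stated. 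A repaired version of your route would have to prove uniqueness of the relevant stationary measure on the limit set (a Furstenberg-type statement) \emph{and} show that the transverse stationary measure together with the leafwise harmonic density determines the current uniquely across the blown-up singular fibers; neither step is supplied.

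Second, the self-intersection. Your claim that the geometric self-intersection of $\widetilde{T}$ vanishes ``because distinct leaves of a $3$-dimensional laminated set are disjoint'' does not work: the Fornaess--Sibony pairing $\int T \wedge T$ of \cite{fornaessSibony} is defined via Hodge theory and is not computed by counting transverse intersections of leaves. The integration current over an invariant algebraic curve of positive self-intersection is a foliated positive (closed, hence harmonic) current whose leaves are pairwise disjoint, yet its self-intersection is strictly positive. So disjointness of leaves proves nothing here; the vanishing for $\widetilde{T}$ is a nontrivial output of the theorem in \cite{sibony_uniqueness} (or of a genuine geometric intersection computation), and your proposal is missing it.
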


\begin{proof}
As shown by the preceding lemmas, the restriction of $\tilf$ to $\widetilde{\mathcal{L} (\fol)}$ is a regular
lamination by Riemann surfaces all of whose leaves are hyperbolic. We claim that this lamination admits no
positive foliated closed current. To check the claim, just note that one such closed current would yield
a transverse invariant measure for the lamination in question. This measure would project through $\Pi$ into
a finite measure invariant by the holonomy group $\Gamma$ on its limit set, which immediately gives rise to a contradiction since
$\Gamma$ is non-elementary. This contradiction proves the claim.

The existence of a unique positive foliated harmonic current $\widetilde{T}$ on $\widetilde{\mathcal{L} (\fol)}$
now follows from the main result in \cite{sibony_uniqueness}. This same theorem also shows that the self-intersection
of $\widetilde{T}$ equals zero. Finally, since the action of $\Gamma$ on its limit set has all orbits dense,
it follows that all the leaves of the lamination induced on $\widetilde{\mathcal{L} (\fol)}$ by $\tilf$ are
dense in $\widetilde{\mathcal{L} (\fol)}$. Hence, the support of $\widetilde{T}$ must coincide with
all of $\widetilde{\mathcal{L} (\fol)}$.
\end{proof}

Recall that the projection $\Pi : N \rightarrow F_1$ is holomorphic and globally defined on $N$.
The current $\widetilde{T}$ can then be pushed forward by $\Pi$ to yield a positive foliated harmonic
current $T=\Pi_{\ast}\widetilde{T}$, for $\fol$ supported on all of $\mathcal{L} (\fol)$ which,
in addition, has null self-intersection. The last ingredient needed in the proof of
Theorem~A is the following proposition:

\begin{prop}\label{Pullingback_Currents}
	Every $(1,1)$-foliated harmonic current $T$ supported in $\mathcal{L} (\fol)$ can be pulled-back
	by $\Pi$ to yield a foliated harmonic current for $\tilf$ supported on $\widetilde{\mathcal{L} (\fol)}$.
\end{prop}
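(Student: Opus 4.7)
The plan is to define $\Pi^{\ast} T$ first on the Zariski-open set $N \setminus E$, where $E := \Pi^{-1}(\{p_1,\ldots,p_k\})$ is the total exceptional divisor, and then to extend the resulting current trivially across $E$. Since $\Pi$ restricts to a biholomorphism $N \setminus E \to F_1 \setminus \{p_1,\ldots,p_k\}$ carrying $\tilf$ to $\fol$, the ordinary pullback $(\Pi\vert_{N \setminus E})^{\ast} T$ is a well-defined positive foliated harmonic $(1,1)$-current on $N \setminus E$ whose support lies in $\widetilde{\mathcal{L} (\fol)} \setminus E$. By the change-of-variable formula for biholomorphisms, its mass is bounded by the mass of $T$, so it has finite mass on $N \setminus E$.

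The crucial geometric input for the extension across $E$ is Lemma~\ref{smoothLeviflat}: the intersection $\widetilde{\mathcal{L} (\fol)} \cap E$ consists of finitely many Jordan curves, each contained in the transverse rational component $D_{J_i}$. In particular, this intersection carries Hausdorff dimension strictly less than~$2$, hence cannot support any mass of a positive $(1,1)$-current of finite mass. Consequently, an extension theorem of Skoda--El Mir type, in the form adapted to foliated harmonic currents in \cite{fornaessSibony} and \cite{fornaesssibony-2}, produces a canonical trivial extension $\widetilde{T}$ of $(\Pi\vert_{N \setminus E})^{\ast} T$ as a positive $(1,1)$-current on all of $N$, with support contained in $\widetilde{\mathcal{L} (\fol)}$.

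It remains to check that $\widetilde{T}$ is foliated and harmonic for $\tilf$. The foliated property is essentially automatic: at each point of $\widetilde{\mathcal{L} (\fol)} \cap E \subset D_{J_i}$, the foliation $\tilf$ is regular and transverse to $D_{J_i}$, so the vanishing of $\widetilde{T}$ on $2$-forms vanishing on the tangent space of $\tilf$ passes from $N \setminus E$ to the extension by continuity of the pairing. For harmonicity, I would test $\widetilde{T}$ against an arbitrary smooth $(1,1)$-form $\varphi$ using a family of cut-offs $\chi_{\varepsilon}$ vanishing in an $\varepsilon$-neighborhood of $E$: off this neighborhood the identity $\widetilde{T}(i\partial \overline{\partial} (\chi_{\varepsilon}\varphi))=0$ follows from the harmonicity of $T$ on $F_1$ transported via $\Pi$, and the error terms supported on the $\varepsilon$-tube vanish in the limit thanks to the mass estimate $\widetilde{T}(\{\mathrm{dist}(\cdot, E)<\varepsilon\}) \to 0$ as $\varepsilon \to 0$.

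The main obstacle I anticipate is the last step, namely controlling the mass of $\widetilde{T}$ in shrinking neighborhoods of $E$ well enough that no boundary contribution survives in the limit. This is needed precisely to upgrade the harmonicity from the complement of $E$ to all of $N$. The required mass decay should follow from the combination of three ingredients already at our disposal: the regularity of the lamination $\widetilde{\mathcal{L} (\fol)}$ at its intersection with $D_{J_i}$ (Lemma~\ref{smoothLeviflat}), the uniform transversality of $\tilf$ to $D_{J_i}$, and the general fact that a positive $(1,1)$-current of finite mass cannot concentrate any mass on a set of Hausdorff dimension strictly smaller than~$2$.
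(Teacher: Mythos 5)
There is a genuine gap, and it sits exactly where the paper puts all of its effort. Your assertion that ``by the change-of-variable formula for biholomorphisms, its mass is bounded by the mass of $T$, so it has finite mass on $N \setminus E$'' is false as stated. The mass of $(\Pi\vert_{N\setminus E})^{\ast}T$ with respect to a metric form $\beta$ on $N$ equals $\langle T, (\Pi^{-1})^{\ast}\beta\rangle$, and $(\Pi^{-1})^{\ast}\beta$ blows up near each $p_i$: in the local coordinates of the paper one has $\Pi(t_i,s_i)=(t_i^{m_i}s_i, t_i^{n_i})$, so $\Pi^{-1}(u_i,v_i)=(v_i^{1/n_i}, u_i v_i^{-m_i/n_i})$ has unbounded differential as $v_i\to 0$. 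Hence local finiteness of the mass of the pullback near $E$ is not a soft consequence of biholomorphic invariance; it is the non-trivial point, and it is only true because $T$ is \emph{foliated} and supported on $\mathcal{L}(\fol)$. Concretely: since $T$ is directed by $\fol$, only the component $a_{11}\,dt_i\wedge d\overline{t}_i$ of a test form contributes, and its push-forward restricted to a leaf of $\fol$ through $p_i$ (a branch of $u_i^{n_i}=c\,v_i^{m_i}$, parametrised by $z_i$ with $z_i^{n_i}=v_i$, $z_i^{m_i}=u_i$) reduces to $a_{11}(v_i^{1/n_i}, u_i v_i^{-m_i/n_i})\,dz_i\wedge d\overline{z}_i$ with a bounded continuous coefficient, because $u_i v_i^{-m_i/n_i}$ is a first integral. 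Without this computation your finite-mass claim, and with it the Skoda--El Mir extension and the cut-off argument for harmonicity, have no foundation.

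Two secondary points. First, the ``general fact'' that a positive $(1,1)$-current of finite mass cannot charge a set of Hausdorff dimension less than~$2$ is not a general fact: $\delta_0\otimes i\,dz_1\wedge d\overline{z}_1$ is a finite-mass positive $(1,1)$-current concentrated at a point. What saves you here is the harmonic local structure $\int_{\Sigma_\alpha} h_\alpha[\Delta_\alpha]\,d\mu(\alpha)$, whose trace measure is absolutely continuous along leaves and hence gives no mass to the transverse Jordan curves $\widetilde{\mathcal{L}(\fol)}\cap D_{J_i}$; you should invoke that rather than a false general principle. Second, Skoda--El Mir in its standard form concerns positive \emph{closed} currents extended across closed complete pluripolar sets; for harmonic (merely $i\partial\overline{\partial}$-closed) currents across a real one-dimensional set one needs the adapted versions, so that step also deserves a precise citation. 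By contrast, the paper sidesteps the extension machinery entirely: it defines $\langle \Pi^{\ast}T,\omega\rangle:=\langle T,\Pi_{\ast}\omega\rangle$ directly and verifies convergence and continuity of this coupling by the leafwise computation above, after which foliatedness and harmonicity are inherited by duality. Your architecture can be made to work, but only after you supply the same leafwise mass estimate that the paper proves.
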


\begin{proof}
Let $T$ be as in the statement and consider a $(1,1)$-differential form $\omega$ on $N$.
To define the pull-back $\widetilde{T}= \Pi^{\ast} T$ it suffices to define a push-forward $\Pi_{\ast} \omega$ for
$\omega$ so that the coupling $\langle T , \Pi_{\ast} \omega \rangle$ makes sense. In more accurate terms,
$\Pi$ is a diffeomorphism between a Zariski-open subset of $N$ and $F_1 \setminus \{p_1, \ldots ,p_k \}$. Thus,
$\Pi_{\ast} \omega$ is naturally a $(1,1)$-differential form {\it defined on $F_1 \setminus \{p_1, \ldots ,p_k \}$}.
In principle, however, the form $\Pi_{\ast} \omega$ may behave wildly near the points $p_1, \ldots ,p_k$. The
proposition will follow from checking that the coupling $\langle T , \Pi_{\ast} \omega \rangle$ is, nonetheless, well defined
and yields a continuous functional on the space of $(1,1)$-forms on $N$.

It is enough to work on a neighborhood of a point $p_i$. As previously seen, there are local coordinates
$(u_i,v_i)$ around $p_i \simeq (0,0)$ where $\fol$ is locally given by the vector field
$$
m_i u_i \frac{\partial}{\partial u_i} + n_i v_i \frac{\partial}{\partial v_i}
$$
where $m_i$ and $n_i$ are strictly positive integers and such that $\{ u_i =0\} \subset C_i$. We then apply the sequence of blow ups
described at the beginning of the section to the point $p_i$ so as to remove the indetermination of the first integral
$u_i^{n_i} v_i^{-m_i}$. Resuming the notation used in the proof of Lemma~\ref{addedlemmaanalytic}, we recall 
that the transform of $\mathcal{L} (\fol)$ intersects transversely the corresponding exceptional divisor at a unique component $D_{J_i}$.
In fact, $D_{J_i}$ is a rational curve of self-intersection $-1$ since it arises from the last (one-point) blow up performed
in our blow up procedure. There are, therefore, affine (blow up) coordinates $(t_i, s_i)$, $\{ t_i =0\} \subset D_{J_i}$
on a neighborhood of $D_{J_i}$ satisfying the following conditions:
\begin{itemize}
	\item[(1)] The foliation $\tilf$ is locally given by the vector field $\partial /\partial t_i$.

	\item[(2)] The blow down map $\Pi : N \rightarrow F_1$ is locally given by $\Pi (t_i s_i) = (t_i^{m_i} s_i ,t_i^{n_i}) = (u_i,v_i)$.
\end{itemize}
In particular, the local inverse of $\Pi$ is determined in ramified coordinates by $t_i = \sqrt[n_i]{v_i}$ and $s_i = u_i /\sqrt[n_i]{v_i^{m_i}}$.

Now let $\omega$ be a $(1,1)$-differential form on the surface $N$ whose support intersects $D_{J_i}$. Set
$$
\omega=a_{11}dt_i \wedge d\overline{t}_i+a_{12}dt_i \wedge d\overline{s}_i+
a_{21}d\overline{t}_i \wedge ds_i+a_{22}ds_i \wedge d\overline{s}_i
$$
in the local coordinates $(t_i ,s_i)$. Recall that $\Pi_{\ast} \omega$ is defined away from the exceptional divisor of $\Pi$.
Over the open set $F_1 \setminus \{p_1, \ldots ,p_k \}$, we have
\[
\langle T, \Pi_{\ast} (a_{12}dt_i \wedge d\overline{s}_i) \rangle= \langle T, \Pi_{\ast}(a_{21}d\overline{t}_i \wedge ds_i) \rangle
= \langle T, \Pi_{\ast}(a_{22}ds_i \wedge d\overline{s}_i) \rangle = 0 \, ,
\]
since $T$ is foliated and the corresponding push-forwards vanish identically over the tangent space of the foliation $\fol$.
Thus, it only remains to show that the coupling
\[
\langle T,\Pi_{\ast}(a_{11}dt_i \wedge d\overline{t}_i) \rangle
\] 
is well defined on $F_1 \setminus \{p_1, \ldots ,p_k \}$. To do so, recall first that $T$ is harmonic so that it is represented
in flow-boxes by
$$
S=\int_{\Sigma_{\alpha}} h_{\alpha} [\Delta_{\alpha}]d\mu(\alpha),
$$
where $\mu(\alpha)$ is a positive Borel measure on the transverse sections $\Sigma_{\alpha}$ and where
$h_{\alpha}$ stand for strictly positive
harmonic functions, uniformly bounded above and below by strictly positive constants. Clearly, the functions $h_{\alpha}$ are Borel measurable
with respect to $\alpha$. Thus, away from $p_i$, the coupling $\langle T,\Pi_{\ast}(a_{11}dt_i \wedge d\overline{t}_i) \rangle$
$$
\int_{\Sigma_{\alpha}} \left( \int_{L_{\alpha}} h_{\alpha} \Pi_{\ast}(a_{11}dt_i \wedge d\overline{t}_i) \right)d\mu(\alpha) \, ,
$$
where $L_{\alpha}$ is the corresponding leaf of $\fol$. Thus, it suffices to show that the integral
$$
\int_{L_{\alpha}} \Pi_{\ast}(a_{11}dt_i \wedge d\overline{t}_i)
$$
is bounded (uniformly on $\alpha$). To check that this is the case, note first that
$$
\Pi_{\ast}(a_{11}dt_i \wedge d\overline{t}_i) = a_{11} (v_i^{1/n_i}, u_i v_i^{-m_i/n_i}) dt_i \wedge d\overline{t}_i \, .
$$
In particular, the coefficient $a_{11} (v_i^{1/n_i}, u_i v_i^{-m_i/n_i})$ is identified with a continuous function on a
neighborhood of $p_i$ since $u_i v_i^{-m_i/n_i}$ is actually constant over the leaves of $\fol$. Meanwhile, the leaves of
$\fol$ are parameterized by a local coordinate $z_i \in \C$ satisfying $z_i^{n_i} = v_i$ and $z_i^{m_i} = u_i$. Thus, we actually
have $z_i = t_i$. In other words, the latter integral becomes
$$
\int_{\Delta} a_{11} (v_i^{1/n_i}, u_i v_i^{-m_i/n_i}) dz_i \wedge d\overline{z}_i \, ,
$$
where $\Delta$ is a disc of (uniform) positive radius around $0 \in \C$. The proposition follows since 
$a_{11} (v_i^{1/n_i}, u_i v_i^{-m_i/n_i})$ is identified with a bounded continuous function.
\end{proof}

\begin{proof}[Proof of Theorem~A]
Statements~(a) and~(b) of Theorem~A were proved in Theorem~\ref{a&b}. In turn assertion~(c) follows from the combination
of Proposition~\ref{Currents_upstairs} and~\ref{Pullingback_Currents}. In fact, Proposition~\ref{Pullingback_Currents}
implies that positive foliated harmonic currents for $\tilf$ on $\widetilde{\mathcal{L} (\fol)}$ are in 1-1 correspondence
with  positive foliated harmonic currents for $\fol$ on $\mathcal{L} (\fol)$ so that the existence and uniqueness of $T$
follows from Proposition~\ref{Currents_upstairs}. This proposition also implies that the self-intersection of $T$ must be zero.

It remains to prove assertion~(d). Besides $T$, integration over any of the invariant lines
$C_1, \ldots,C_k$ also yields foliated currents that are positive and harmonic (indeed closed). Let $T_{C_1}, \ldots ,
T_{C_k}$ denote these integration currents. Since $T$ and $T_{C_1}, \ldots , T_{C_k}$ are clearly independent.
It suffices to check that any positive foliated harmonic current $T'$ is a linear combination of the previous currents.
If $T'$ is as above, up to subtracting a suitable linear combination of $T$ and of $T_{C_1}, \ldots , T_{C_k}$ we can
assume that $T'$ gives mass neither to the invariant lines $C_1, \ldots,C_k$ nor to the Levi-flat $\mathcal{L} (\fol)$.
To complete the proof of Theorem~A, we will show that $T'$ as above is identically zero. For this assume aiming at a contradiction
that $T'$ is not identically zero. Then its support intersects non-trivially $F_1 \setminus (\mathcal{L} (\fol) \cup
C_1 \cup \cdots \cup C_k)$. Since $T'$ is harmonic (equivalently associated with an harmonic measure), there must exist leaves $L$
of $\fol$ contained in the invariant open set $F_1 \setminus (\mathcal{L} (\fol) \cup C_1 \cup \cdots \cup C_k)$ that are
{\it recurrent}, i.e., that accumulate on themselves (see any of \cite{Garnett}, \cite{Etienneghys}, \cite{Candel}). However, this is
impossible since, by construction, the transverse dynamics of $\fol$ on the open set in question is equivalent to the dynamics of the
quasifuchsian (or Fuchian) group $\Gamma$ on its discontinuity set and therefore wandering. This ends the proof of Theorem~A.
\end{proof}

\end{document}